\documentclass[10pt,french]{smfart}

\usepackage[T1]{fontenc}
\usepackage[english,french]{babel}
\usepackage{latexsym,amscd,color}
\usepackage{amsmath,amsfonts,amssymb,mathrsfs}
\usepackage{enumerate,euscript}
\usepackage{amssymb,url,xspace,smfthm}
\input xy
\xyoption{all}

\newcommand{\id}{Id}
\renewcommand{\a}{\mathfrak a}
\DeclareMathOperator{\codim}{codim}
\DeclareMathOperator{\gal}{Gal}

\newcommand{\BibTeX}{{\scshape Bib}\kern-.08em\TeX}
\DeclareMathOperator{\Gr}{Gr}
\newcommand{\T}{\S\kern .15em\relax }
\newcommand{\AMS}{$\mathcal{A}$\kern-.1667em\lower.5ex\hbox
        {$\mathcal{M}$}\kern-.125em$\mathcal{S}$}
\newcommand{\resp}{\textit{resp}.\xspace}
\newcommand{\intersect}{\cdot\ldots\cdot}

\DeclareMathOperator{\proj}{Proj}
\DeclareMathOperator{\car}{car}

\DeclareMathOperator{\Id}{Id}

\DeclareMathOperator{\rg}{rg}

\DeclareMathOperator{\spec}{Spec}

\newcommand{\p}{\mathfrak{p}}
\DeclareMathOperator{\sym}{Sym}

\renewcommand{\O}{\mathcal{O}}

\newcommand{\sm}{\mathfrak{m}}

\newcommand{\f}{\mathbb{F}}
\newcommand{\ndot}{\raisebox{.4ex}{.}}




\tolerance 400
\pretolerance 200

\title{Comptage des multiplicit\'es dans une hypersurface sur un corps fini}
\alttitle{Counting multiplicities in a hypersurface over finie field}
\date{\today}
\author{Chunhui Liu}
\address{Institut de Math\'ematiques de Jussieu - Paris Rive Gauche(UMR 7586)\\
Universit\'e Paris-Diderot - Paris 7\\UFR de Math\'ematiques\\
B\^atiment Sophie Germain\\Case 7012\\75205 PARIS Cedex 13\\France}
\email{chunhui.liu@imj-prg.fr}
\urladdr{https://webusers.imj-prg.fr/~chunhui.liu/}

\begin{document}
\def\smfbyname{}
\begin{abstract}
Dans cet article, on consid\`ere un probl\`eme de comptage de multiplicit\'es. On fixe une fonction de comptage de multiplicit\'e des points rationnels dans une hypersurface d'un espace projectif sur un corps fini, et on donne une majoration de la somme de cette fonction de comptage en terme du degr\'e de l'hypersurface, de la dimension et du cardinal du corps fini. Cette majoration donne une description de la complexit\'e du lieu singulier de cette hypersurface. Afin d'obtenir la majoration, on introduit une notion appel\'ee arbre d'intersection par la th\'eorie d'intersection. On construit une suite d'intersections, telle que la multiplicit\'e d'un point rationnel singulier soit \'egale \`a celle d'une des composantes irr\'eductibles dans les intersections. Les multiplicit\'es des composantes irr\'eductibles construites ci-dessus sont major\'ees par ses multiplicit\'es dans l'arbre d'intersection.
\end{abstract}

\begin{altabstract}
In this paper, we consider a problem of counting multiplicities. We fix a counting function of multiplicity of rational points in a hypersurface of a projective space over a finite field, and we give an upper bound for the sum with respect to this counting function in terms of the degree of the hypersurface, the dimension and the cardinality of the finite field. This upper bound gives a description of the complexity of the singular locus of this hypersurface. In order to obtain this upper bound, we introduce a notion called intersection tree by intersection theory. We construct a sequence of intersections, such that the multiplicity of a singular rational point is equal to that of one of the irreducible components in these intersections. The multiplicities of these irreducible components constructed above are bounded by their multiplicities in the intersection tree.
\end{altabstract}
\maketitle

\tableofcontents

\section{Introduction}
 Dans cet article, on consid\`ere le probl\`eme de comptage des multiplicit\'es dans un sch\'ema projectif sur un corps fini. Soit $X$ un sch\'ema de type fini sur un corps fini $k$, on s'int\'eresse au probl\`eme de comptage de la forme
\[\sum_{\xi\in X(k)}f(\mu_\xi(X)),\]
o\`u $f(\ndot)$ est un polyn\^ome et $\mu_\xi(X)$ est la multiplicit\'e locale du point $\xi$ dans $X$ d\'efinie via la fonction de Hilbert-Samuel locale.

On fixe un corps fini $k=\f_q$, o\`u $q$ est une puissance d'un nombre premier $p$ (qui est la caract\'eristique du corps $k$). On consid\`ere le cas o\`u $X$ est un sous-sch\'ema ferm\'e de $\mathbb P^n_{\f_q}$. Il y a beaucoup de r\'esultats autour de la majoration du nombre des points $\f_q$-rationnels de $X$, qui signifie que l'on prend la fonction de comptage $f(\ndot)\equiv 1$ ci-dessus. Pour cela, on peut utiliser la m\'ethode analytique ou la m\'ethode de cohomologie \'etale.

Si on prend un choix non-trivial de la fonction de comptage, par exemple, on prend $f(\mu_\xi(X))$ de la forme $\mu_\xi(X)(\mu_\xi(X)-1)^t$, o\`u $t$ est un entier positif. Dans ce cas-l\`a, les m\'ethodes mentionn\'ees ci-dessus sont difficiles \`a utiliser pour cela.

\subsection{R\'esultats ant\'erieurs}
Soit $X$ une courbe plane projective r\'eduite. Dans ce cas-l\`a, le lieu singulier de $X$ est de dimension $0$ si la courbe est singuli\`ere. Soit $\delta$ le degr\'e de $X$, d'apr\`es l'exercice 5-22 dans la page 115 de \cite{Fulton2}, on a
\begin{equation}\label{intro-fulton-multiplicity}
\sum\limits_{\xi\in X}\mu_\xi(X)\left(\mu_\xi(X)-1\right)\leqslant\delta(\delta-1),
\end{equation}
qui d\'ecoule du th\'eor\`eme de B\'ezout en th\'eorie d'intersection. Plus pr\'ecis\'ement, soit $g$ le genre de la courbe plane projective $X$. Si $X$ est g\'eometriquement int\`egre, d'apr\`es le corollaire 1 dans la page 201 de \cite{Fulton2}, on a
\[g\leqslant\frac{(\delta-1)(\delta-2)}{2}-\sum_{\xi\in X}\frac{\mu_\xi(X)\left(\mu_\xi(X)-1\right)}{2}\]
par le th\'eor\`eme de Riemann-Roch sur les courbes planes.

Plus g\'en\'eralement, soit $X\hookrightarrow\mathbb P^n_k$ une hypersurface projective sur un corps alg\'ebriquement clos $k$, dont le lieu singulier est de dimension $0$. Par la m\'ethode des pinceaux de Lefschetz, une cons\'equence directe de \cite[Corollaire 4.2.1]{Laumon1975} donne
\[\sum_{\xi\in X}\mu_\xi(X)(\mu_\xi(X)-1)^{n-1}\leqslant\delta(\delta-1)^{n-1}.\]
Mais ces conditions sont trop restrictives pour un probl\`eme de comptage de multiplicit\'es.
\subsection{R\'esultat principal}
Dans cet article, on consid\`ere le probl\`eme de comptage des multiplicit\'es dans un sch\'ema sur un corps fini. On prend une fonction de comptage, et on donnera une majoration du comptage de la fonction de comptage pour une hypersurface projective. Le r\'esultat (le th\'eor\`eme \ref{main result}) est suivant:
\begin{theo}\label{main result-introduction}
Soit $X$ une hypersurface r\'eduite de degr\'e $\delta$ dans un espace projectif $\mathbb P^n_{\f_q}$, o\`u $n\geqslant2$ est un entier. Soit $s$ la dimension du lieu singulier de $X$. On a
  \begin{equation}\label{main equality}
    \sum_{\xi\in X(\f_q)}\mu_\xi(X)(\mu_\xi(X)-1)^{n-s-1}\ll_n\delta(\delta-1)^{n-s-1}\max\{\delta-1,q\}^{s}.
  \end{equation}
\end{theo}
On explicitera la constante implicite dans l'estimation \eqref{main equality} dans le th\'eor\`eme \ref{main result-introduction}.
\subsection{Motivation}
Soit $X$ un sch\'ema noeth\'erien r\'eduit qui est de dimension pure, comme le lieu r\'egulier $X^{\mathrm{reg}}$ est un ouvert dense dans $X$, on a $\codim(X,X^{\mathrm{sing}})\geqslant1$, o\`u $X^\mathrm{sing}$ est le lieu singulier de $X$.

Si on veut d\'ecrire la complexit\'e du lieu singulier de $X$ plus pr\'ecis\'ement, il n'est pas suffisant de consid\'erer seulement la dimension de $X^{\mathrm{sing}}$. Soit $X$ un sous-sch\'ema ferm\'e de $\mathbb P^n_k$. Pour d\'ecrire la complexit\'e de $X^{\mathrm{sing}}$, il faut consid\'erer la dimension de $X^{\mathrm{sing}}$, le degr\'e de $X^{\mathrm{sing}}$ et la multiplicit\'e de $X^{\mathrm{sing}}$ dans $X$ (ou les multiplicit\'es des points singuliers de $X$). Il faut choisir une fonction convenable de comptage de multiplicit\'es $f(\ndot)$ telle que $f(1)=0$.

D'apr\`es le th\'eor\`eme \ref{main result-introduction}, lorsque $X$ est une hypersurface d'un espace projectif sur un corps fini, les trois invariants ne peuvent pas \^etre trop grands simultan\'ement, qui signifie que le lieu singulier de $X$ ne peut pas \^etre "trop compliqu\'e". Dans la remarque \ref{proper counting function}, on expliquera pourquoi la fonction de comptage $\mu_\xi(X)(\mu_\xi(X)-1)^{n-s-1}$ dans l'in\'egalit\'e \eqref{main equality} est un choix convenable. Alors l'in\'egalit\'e \eqref{main equality} est une description convenable de la complexit\'e du lieu singulier de $X$ lorsque $q$ est assez grand.

\subsection{Outils principaux}
Contrairement aux m\'ethodes classiques comme par exemple la cohomologie \'etale ou la somme exponentielle, on utilise la th\'eorie d'intersection pour avoir un bon contr\^ole des multiplicit\'es.

On consid\`ere une hypersurface r\'eduite projective $X\hookrightarrow\mathbb P^n_{\f_q}$ dont le lieu singulier est de dimension $s$. Soit $Y$ un sous-sch\'ema int\`egre de $X$. Alors il existe un sous-ensemble dense $Y'$ de $Y$, tel que pour tout point $\xi\in Y'$, on ait $\mu_\xi(X)=\mu_Y(X)$. On cherche une famille $\{X_i\}_{i=1}^{n-s-1}$ d'hypersurfaces de $\mathbb P^n$ contenant $\xi$ telle que $X,X_1,\ldots,X_{n-s-1}$ s'intersectent proprement et qu'il existe une composante irr\'eductible $Y$ de l'intersection de $X,X_1,\ldots,X_{n-s-1}$ contenant $\xi$ et v\'erifiant $\mu_\xi(X)=\mu_Y(X)$. La construction de ces hypersurfaces fait intervenir des d\'eriv\'ees partielles (\'eventuellement d'ordre sup\'erieur) de l'\'equation qui d\'efinit $X$, et la construction se fait de mani\`ere r\'ecursive. Pour cela, on introduit une notion appel\'ee "arbre d'intersection" en langage de la th\'eorie des graphes, voir \S \ref{definition of intersection tree}. Un arbre d'intersection est un arbre \'etiquet\'e avec poids engendr\'es par les intersections de $X$ et certaines de ses hypersurfaces d\'eriv\'ees (voir la d\'efinition \ref{derivative hypersurface}), dont les sommets sont des sous-sch\'emas int\`egres de $X$, les \'etiquettes sont des hypersurfaces  d\'eriv\'ees, et les poids sur ses ar\^etes sont les multiplicit\'es d'intersection correspondantes \`a l'intersection du sommet et de son \'etiquette.

Comme $X$ est une hypersurface, on peut estimer la fonction $\mu_Y(X)(\mu_Y(X)-1)^{n-s-1}$ par les poids d\'efinis ci-dessus. D'apr\`es le th\'eor\`eme de B\'ezout (le th\'eor\`eme \ref{bezout}), la somme des poids peut \^etre born\'ee par le degr\'e de $X$ par rapport au fibr\'e universel de $X$.

 Pour une majoration utile du nombre de points $\f_q$-rationnels d'une composante irr\'eductible fix\'ee, on utilise l'estimation dans la proposition \ref{lineaire}.

Dans la premi\`ere section, on introduira la d\'efinition de l'arbre d'intersection afin de d\'ecrire la suite des intersections mentionn\'ee ci-dessus.  \ Dans la deuxi\`eme section, on d\'emontrera certains r\'esultats untiles de la th\'eorie d'intersection et du comptage de objets sur un corps fini. Ils sont des r\'esultats pr\'eliminaires pour le travail dans la suite. Dans la troisi\`eme section, on raisonnera par r\'ecurrence pour d\'emontrer un r\'esultat, qui est une majoration du produit des multiplicit\'es en les poids dans les arbres d'intersection. Dans la quatri\`eme section, on construira les intersection afin de d\'emontrer l'in\'egalit\'e \eqref{main equality}, et on finira la d\'emonstration.
\subsection*{Remerciment}
Ce travail fait partie de ma th\`ese pr\'epar\'ee \`a l'Universit\'e Paris Diderot - Paris 7. D'abord, je voudrais remercier profond\'ement mes directeurs de th\`ese Huayi Chen et Marc Hindry pour diriger ma th\`ese. De plus, je voudrais remercier Qing Liu pour me donner beaucoup de suggestions pour ce travail. Je voudrais remercier mes amis Yang Cao et Xiaowen Hu pour leur aide \`a ce travail.

\section{Arbre d'intersection}
Dans ce paragraphe, on introduit la notion d'arbre d'intersection dans le cadre de la th\'eorie des graphes, qui sera utilis\'ee dans l'estimation de la fonction de comptage de multiplicit\'es. Cette construction est valable dans un cadre g\'en\'eral des sch\'emas projectifs r\'eguliers sur un corps munis d'un faisceau inversible ample. Dans ce paragraphe, on fixe un corps $k$.
\subsection{D\'efinition}\label{definition of intersection tree}
Soient $Y$ un $k$-sch\'ema projectif r\'egulier et $L$ un $\O_Y$-module inversible ample. Si $X$ est un sous-sch\'ema ferm\'e de $Y$, on d\'esigne par $\deg_L(X)$ le degr\'e de $X$ par rapport au $\O_Y$-module inversible $L$, qui est d\'efini comme $\deg(c_1(L)^{\dim(X)}\cap[X])$. Soit $\delta\geqslant1$ un entier. On appelle \textit{arbre d'intersection de niveau $\delta$} sur $Y$ tout arbre $\mathscr T$ \'etiquet\'e et avec poids (sur les ar\^etes) qui v\'erifie les conditions suivantes:
  \begin{enumerate}
    \item les sommets de $\mathscr T$ sont des occurrences de sous-sch\'emas ferm\'es int\`egres de $Y$ (un sous-sch\'ema ferm\'e int\`egre de $Y$ peut appara\^itre plusieurs fois dans l'arbre);
    \item \`a chaque sommet $X$ de $\mathscr T$ est attach\'ee une \'etiquette, qui est un sous-sch\'ema ferm\'e propre de dimension pure de $Y$ ou vide;
    \item un sommet de $\mathscr T$ est une feuille si et seulement si son \'etiquette est vide;
    \item si $X$ est un sommet de $\mathscr T$ qui n'est pas une feuille, alors
    \begin{itemize}
      \item son \'etiquette $\widetilde{X}$ v\'erifie l'in\'egalit\'e $\deg_L(\widetilde{X})\leqslant \delta$ et les sous-sch\'emas ferm\'es $X$ et $\widetilde{X}$ s'intersectent proprement dans $Y$;
      \item les fils de $X$ sont pr\'ecis\'ement les composantes irr\'eductibles du produit d'intersection $X\cdot \widetilde{X}$ dans $Y$;
      \item pour tout fils $Z$ de $X$, \`a l'ar\^ete $\ell$ qui relie $X$ et $Z$ est attach\'e un poids $w(\ell)$ qui est \'egal \`a la multiplicit\'e d'intersection $i(Z;X\cdot \widetilde{X}; Y)$.
    \end{itemize}
    \end{enumerate}

  Pour un arbre d'intersection $\mathscr T$ fix\'e, on appelle \textit{sous-arbre d'intersection} de $\mathscr T$ tout sous-arbre complet de $\mathscr T$, qui est n\'ecessairement un arbre d'intersection.
   \subsubsection*{Poids d'un sommet}
Soient $Y$ un sch\'ema projectif r\'egulier sur $\spec k$, muni d'un faisceau inversible ample $L$, et $\mathscr T$ un arbre d'intersection sur $Y$. Pour tout sommet $X$ de $\mathscr T$, on d\'efinit le \textit{poids} de $X$ comme le produit des poids de tous les ar\^etes dans le chemin qui relie la racine de $\mathscr T$ et le sommet $X$, not\'e comme $w_{\mathscr T}(X)$. Si $X$ est la racine de l'arbre d'intersection, par convention $w_{\mathscr T}(X)$ est d\'efini comme $1$.
\subsubsection*{Poids d'un sous-sch\'ema ferm\'e int\`egre}
Soit $Z$ un sous-sch\'ema ferm\'e int\`egre de $Y$. On appelle \textit{poids} de $Z$ relativement \`a l'arbre $\mathscr T$ la somme des poids de toutes les occurrances de $Z$ comme sommets de $\mathscr T$, not\'e comme $W_{\mathscr T}(Z)$. Si $Z$ n'appara\^it pas dans l'arbre $\mathscr T$ comme un sommet, par convention le poids $W_{\mathscr T}(Z)$ est d\'efini comme $0$. Soit $Z$ un sommet dans l'arbre d'intersection $\mathscr T$. Lorsque l'on calcule $W_{\mathscr T}(Z)$, l'\'el\'ement $Z$ est consid\'er\'e comme un sous-sch\'ema ferm\'e int\`egre de $Y$. C'est-\`a-dire que l'on compte toutes les occurrences de $Z$ dans l'arbre d'intersection $\mathscr T$.

Dans les sous-paragraphes suivants, on rappelle quelques notions que l'on utilise dans la d\'efinition d'arbre d'intersection. Sauf mention au contraire, tous les anneaux sont suppos\'es \^etre commutatifs, unif\`eres et noeth\'eriens.

\subsection{Suite de composition}\label{length of module}
Soient $A$ un anneau et $M$ un $A$-module. On dit que $M$ est \textit{de longueur finie} s'il existe une suite d\'ecroissante de sous-modules de $M$ (appel\'ee une \textit{suite de composition} de $M$)
\[M=M_0\supsetneq M_1\supsetneq\cdots\supsetneq M_n=\{0\}\]
telle que chaque sous-quotient $M_{i-1}/M_{i}$ soit un $A$-module simple (i.e. isomorphe \`a un module quotient de $A$ par un id\'eal maximal), o\`u $i\in\{1,\ldots,n\}$. Il s'av\`ere que le nombre $n$ ne d\'epend pas du choix de la suite de composition. On l'appelle \textit{longueur} du module $M$, not\'ee comme $\ell_A(M)$, ou comme $\ell(M)$ pour simplifier. La longueur du module nul est $0$. On rappelle que, si $A$ est un anneau artinien  (i.e. un anneau noeth\'erien non-nul dont tout id\'eal premier est maximal), alors tout $A$-module de type fini est de longueur finie.  On revoie les lecteurs \`a \cite[\S2.4]{GTM150} pour plus de d\'etails.

\subsection{Multiplicit\'es de modules et d'anneaux}\label{multiplicity of local ring}
Dans cette partie, on rappelle quelques notions de multiplicit\'e dans le cadre d'alg\`ebres commutatives.
\subsubsection*{Multiplicit\'e d'un module}
Soit $A$ un anneau dont la dimension est plus grande ou \'egale \`a $1$. Soient $d$ un entier, $d\geqslant1$, $M$ un $A$-module de type fini avec $\dim_A(M)=d$, et $\a$ un id\'eal de $A$ contenu dans le radical de Jacobson de $A$ tel que l'anneau quotient $A/\a$ soit artinien. Pour tout entier naturel $m$, soit $H_{\a,M}(m)=\ell_{A/\a}(\a^mM/\a^{m+1}M)$. Il existe un polyn\^ome $P_{\a,M}$ dont le degr\'e est plus petit ou \'egal \`a $d-1$, tel que $H_{\a,M}(m)=P_{\a,M}(m)$ pour $m$ assez positif. En outre, il existe un entier $e_{\a,M}\geqslant 0$ tel que
\[P_{\a,M}(m)=e_{\a,M}\frac{m^{d-1}}{(d-1)!}+o(m^{d-1}).\]
Le nombre entier $e_{\a,M}$ est appel\'e la \textit{multiplicit\'e} de $M$ relativement \`a l'id\'eal $\mathfrak a$. Lorsque $A$ est un anneau local et $M\neq\{0\}$, on a toujours $e_{\a,M}>0$ (cf. \cite[Exercise 12.6]{GTM150}). Si $M=A$, le nombre $e_{\a,A}$ est appel\'e la \textit{multiplicit\'e} de l'id\'eal $\a$ dans $A$.

Avec les m\^emes notations ci-dessus, on consid\`ere la fonction $L_{\mathfrak a,M}(m)=\ell_{A/\mathfrak a}(M/\mathfrak a^{m+1}M)$. Il existe un polyn\^ome $Q_{\mathfrak a,M}$ dont le degr\'e est plus petit ou \'egal \`a $d$, tel que $Q_{\mathfrak a,M}(m)=L_{\mathfrak a,M}(m)$ pour $m$ assez positif. De plus, on a
\[Q_{\mathfrak a,M}(m)=e_{\a,M}\frac{m^{d}}{d!}+o(m^{d}).\]

Soient $\mathfrak a$ et $\mathfrak b$ deux id\'eaux de $A$ contenus dans le radical de Jacobson de $A$, tels que $A/\a$ et $A/\mathfrak b$ soient artiniens. Si $\mathfrak a\subseteq \mathfrak b$, d'apr\`es \cite[Chap II, \S3, a]{SamuelLocAlg}, on a $Q_{\mathfrak a,M}(m)\geqslant Q_{\mathfrak b,M}(m)$.
Alors on obtient l'in\'egalit\'e
\begin{equation}\label{smaller ideal with bigger multiplicity}
  e_{\mathfrak a,M}\geqslant e_{\mathfrak b,M}.
\end{equation}

Si $A$ est un anneau local, on peut exprimer la multiplicit\'e $e_{\mathfrak{a},M}$ comme une somme locale
  \begin{equation}
  \label{cycleofmult-alg}
    e_{\a,M}=\sum_\p\ell_{A_\p}(M_\p)\cdot e_{\a,A/\p},
  \end{equation}
  o\`u $\p$ parcourt l'ensemble des id\'eaux premiers minimaux de $A$ tels que $\dim(A)=\dim(A/\mathfrak p)$ (voir \cite[Chap. VIII, \S 7, n$^\circ$ 1, Prop. 3]{Bourbaki83} pour une d\'emonstration).

\subsubsection*{Multiplicit\'e d'un anneau local}
Soient $A$ un anneau local, $\mathfrak m$ son id\'eal maximal et $k=A/\mathfrak m$ son corps r\'esiduel. La \textit{multiplicit\'e} de $A$ est d\'efinie comme la multiplicit\'e de l'id\'eal maximal $\sm$ dans $A$. Il s'av\`ere que $e_{\mathfrak m,A}>0$ (cf. \cite[Exercise 12.6]{GTM150}).

On rappelle que l'in\'egalit\'e $\dim(A)\leqslant \dim_k(\sm/\sm^2)$ est toujours v\'erifi\'ee (cf. \cite[(12.J)]{Matsumura1}). Si on a l'\'egalit\'e $\dim(A)=\dim_k(\sm/\sm^2)$, on dit que $A$ est un \textit{anneau local r\'egulier}. Si $A$ est un anneau local r\'egulier, alors $\bigoplus\limits_{i\geqslant0}\mathfrak m^i/\mathfrak m^{i+1}$ est isomorphe \`a $\bigoplus\limits_{i\geqslant0}\sym^i_k(\mathfrak m/\mathfrak m^2)$ comme $k$-alg\`ebres gradu\'ees. Dans ce cas-l\`a, la multiplicit\'e de $A$ est $1$ (cf. \cite[\S 14]{Matsumura1}). La r\'eciproque n'est pas vraie~: il existe des anneaux locaux de multiplicit\'e $1$ qui ne sont pas r\'eguliers (voir l'exercice 2.5 dans la page 41 de \cite{Roberts98} pour un contre-exemple). Elle est vraie lorsque $\spec A$ est de dimension pure. On revoie les lecteurs \`a \cite[(40.6)]{Nagata62} pour une d\'emonstration.

\subsection{Notions de la th\'eorie d'intersection}\label{mult of a sub-scheme}
Dans cette partie, on rapplle certaines notions de la th\'eorie classique d'intersection. La r\'ef\'erence principale est \cite{Samuel}, dont l'approche est \'equivalente \`a celle de \cite{Fulton}, voir \cite[Example 7.1.1]{Fulton} et la partie e) dans la page 84 de \cite{Samuel}.
\subsubsection*{Multiplicit\'e le long d'un sous-sch\'ema ferm\'e}
Soit $X$ un sch\'ema localement noeth\'erien. Si $\xi$ est un point de $X$, on d\'esigne par $\mu_\xi(X)$ la multiplicit\'e de l'anneau local $\O_{X,\xi}$. Si $Y$ est un sous-sch\'ema ferm\'e int\`egre de $X$ dont le point g\'en\'erique est $\eta_Y$, on d\'esigne par $\O_{X,Y}$ l'anneau local $\O_{X,\eta_Y}$ pour simplifier, et on d\'esigne par $\mu_Y(X)$ la multiplicit\'e de l'anneau $\O_{X,Y}$.

\subsubsection*{Lieu r\'egulier et lieu singulier}
Soit $X$ un sch\'ema. On d\'esigne par $X^{\mathrm{reg}}$ l'ensemble des points $\xi\in X$ tels que $\O_{X,\xi}$ soit un anneau local r\'egulier, appel\'e le \textit{lieu r\'egulier} de $X$. Si $X^{\mathrm{reg}}=X$, on dit que $X$ est un \textit{sch\'ema r\'egulier}. Soit en outre $X^{\mathrm{sing}}$ le compl\'ementaire $X\smallsetminus X^{\mathrm{reg}}$, appel\'e le \textit{lieu singulier} de $X$. Si $X$ est localement de type fini sur le spectre d'un corps, l'ensemble $X^{\mathrm{reg}}$ est un ouvert Zariski de $X$ (cf. \cite[Corollary 8.16, Chap. II]{GTM52}), et donc l'ensemble des points de multiplicit\'e $1$ est dense dans $X$ si $X$ est irr\'eductible et $X^{\mathrm{reg}}\neq\emptyset$.
\subsubsection*{Multiplicit\'e d'intersection}
Soit $X$ un sch\'ema noeth\'erien de dimension finie. On dit que $X$ est de \textit{dimension pure} si toutes les composantes irr\'eductibles de $X$ ont la m\^eme dimension.

Soit $k$ un corps. Soit $Y$ un sch\'ema r\'egulier de type fini sur $\spec k$ tel que le morphisme canonique $Y\rightarrow\spec k$ soit s\'epar\'e, et soient $X_1,\ldots,X_r$ des sous-sch\'emas ferm\'es de dimension pure de $Y$. On d\'esigne par $\Delta:Y\rightarrow Y^{\times_kr}$ le morphisme diagonal. Il s'av\`ere que le produit fibr\'e de $\Delta(Y)$ et $X_1\times_k\cdots\times_kX_r$ sur $Y^{\times_kr}$ est isomorphe \`a l'intersection sch\'ematique $\bigcap_{i=1}^rX_i$. Ainsi on peut consid\'erer $\bigcap_{i=1}^rX_i$ comme un sous-sch\'ema ferm\'e de $X_1\times_k\cdots\times_kX_r$. Soit $\mathcal I$ le faisceau d'id\'eaux de $\mathcal O_{X_1\times_k\cdots\times_kX_r}$ correspondant \`a $\bigcap_{i=1}^rX_i$.

Soit $M$ une composante irr\'eductible de $\bigcap_{i=1}^rX_i$ consid\'er\'e comme un sous-sch\'ema ferm\'e int\`egre de $Y$. On d\'esigne par $\Delta(M)$ le sous-sch\'ema ferm\'e int\`egre de $X_1\times_k\cdots\times_kX_r$ l'image de $M$ par le morphisme diagonal (qui est une immersion ferm\'ee car $Y$ est s\'epar\'e sur $\spec k$). Soit $\eta_M$ le point g\'en\'erique de $\Delta(M)$. L'id\'eal $\mathcal I_{\eta_M}$ est appel\'e \textit{l'id\'eal diagonal} de l'anneau $\mathcal O_{X_1\times_k\cdots\times_kX_r,\Delta(M)}$. On d\'efinit la \textit{multiplicit\'e d'intersection} de $X_1,\ldots,X_r$ en $M$ comme la multiplicit\'e de l'id\'eal $\mathcal I_{\eta_M}$ dans l'anneau local $\mathcal O_{X_1\times_k\cdots\times_kX_r,\Delta(M)}$, not\'ee comme \[i(M;X_1\intersect X_r;Y).\]
Si un sch\'ema int\`egre $N$ de $Y$ n'est pas une composante irr\'eductible de $X_1\cap\cdots\cap X_r$, on d\'efinit
\[i(N;X_1\intersect X_r;Y)=0\]
par convention. On revoie les lecteurs \`a la page 148 de \cite{FGA-Weil} et la page 77 de \cite{Samuel} pour plus de d\'etails de cette d\'efinition (voir aussi les chapitres 7 et 8 de \cite{Fulton} pour une autre d\'efinition \'equivalente).

\subsubsection*{Composantes propres}
Soit $k$ un corps. Soient $Y$ un $k$-sch\'ema r\'egulier s\'epar\'e de type fini, et $X_1,\ldots,X_r$ des sous-sch\'emas ferm\'es de dimension pure de $Y$. On d\'esigne par $\mathcal C(X_1\intersect X_r)$ l'ensemble des composantes irr\'eductibles de l'intersection sch\'ematique $X_1\cap\cdots\cap X_r$. En particulier, si $X$ est un sous-sch\'ema ferm\'e de dimension pure de $Y$, alors $\mathcal C(X)$ d\'esigne l'ensemble des composantes irr\'eductibles de $X$. Sauf sp\'ecifiquement mentionn\'e, toute composante irr\'eductible dans $\mathcal C(X_1\intersect X_r)$ ou $\mathcal C(X)$ est consid\'er\'ee comme un sous-sch\'ema ferm\'e int\`egre de $Y$.

On rappelle que l'on a (cf. \cite[Chap. III, Prop. 17]{SerreLocAlg})
\[\dim(M)\geqslant\dim(X_1)+\cdots+\dim(X_r)-(r-1)\dim(Y)\]
pour tout $M\in\mathcal C(X_1\intersect X_r)$. On dit que les sch\'emas $X_1,\ldots,X_r$ \textit{s'intersectent proprement} en $M$ dans $Y$, ou encore $M$ est une \textit{composante propre} de l'intersection $X_1\cdot\ldots\cdot X_r$ dans $Y$, si $M\in\mathcal C(X_1\intersect X_r)$ et si l'\'egalit\'e
\[\dim(M)=\dim(X_1)+\cdots+\dim(X_r)-(r-1)\dim(Y)\]
est v\'erifi\'ee. On dit que $X_1,\ldots,X_r$ \textit{s'intersectent proprement} si tout \'el\'ement $M\in\mathcal C(X_1\intersect X_r)$ est une composante propre de l'intersection $X_1\cdot\ldots\cdot X_r$ dans $Y$.

\section{Estimation de poids des arbres d'intersection}
\subsection{\'Enonc\'e du th\'eor\`eme}
Dans tout le paragraphe, on fixe un corps $k$, un entier $n \geqslant 1$ et un espace vectoriel $E$ de rang $n+1$ sur $k$. On d\'efinit l'espace projectif $\mathbb P(E)$ comme le sch\'ema qui repr\'esente le foncteur de la cat\'egorie des $k$-alg\`ebres commutatives dans la cat\'egorie des ensembles, qui envoie toute $k$-alg\`ebre commutative $A$ sur l'ensemble des $A$-modules quotients de $E\otimes_{k}A$ qui sont projectifs de rang $1$. De plus, on d\'esigne par $\mathbb P^n_k$ l'espace projectif $\mathbb P(k^{n+1})$ pour simplifier, ou par $\mathbb P^n$ s'il n'y a pas d'ambigu\"it\'e sur $k$. Si $L$ est le faisceau inversible universel $\O_{\mathbb P(E)}(1)$, le degr\'e de $X$ par rapport \`a $\O_{\mathbb P(E)}(1)$ est not\'e comme $\deg(X)$ pour simplifier.

Soient $\{X_i\}_{i=1}^r$ une famille de sous-sch\'emas ferm\'es de dimension pure de $\mathbb P(E)$ qui s'intersectent proprement dans $\mathbb P(E)$ (voir \S\ref{mult of a sub-scheme} pour la d\'efinition). On \'etablira le th\'eor\`eme suivant, qui peut \^etre consid\'er\'e comme une majoration du produit des multiplicit\'es locales de $X_1,\ldots,X_r$ en fonction des arbres d'intersections.
\begin{theo}\label{chongshu2}
 On suppose que $k$ est un corps parfait. Soient $\{X_i\}_{i=1}^r$ une famille de sous-sch\'emas ferm\'es de dimension pure de $\mathbb P(E)$ qui s'intersectent proprement dans $\mathbb P(E)$. Pour tout composante irr\'educitble $Y\in\mathcal C(X_1\intersect X_r)$. Soit un arbre d'intersection $\mathscr T_Y$ ayant $Y$ comme racine. On consid\`ere un sommet $M$ dans les arbres d'intersection $\{\mathscr T_Y\}_{Y\in\mathcal C(X_1\intersect X_r)}$ v\'erifiant: pour tout sommet $Z$ dans $\{\mathscr T_Y\}_{Y\in\mathcal C(X_1\intersect X_r)}$, si $M$ est un sous-sch\'ema propre de $Z$, alors il existe un descendant de $Z$ qui est une occurrence de $M$ comme sch\'emas. Alors l'in\'egalit\'e suivante est satisfaite :
  \begin{equation}\label{no auxillary scheme}
    \sum_{Y\in\mathcal C(X_1\intersect X_r)}W_{\mathscr T_Y}(M)i(Y;X_1\intersect X_r;\mathbb P(E))\geqslant \mu_{M}(X_1)\cdots\mu_{M}(X_r),
  \end{equation}
  o\`u l'expression $\mu_M(X_i)$ d\'esigne la multiplicit\'e de l'anneau local de $X_i$ en le point g\'en\'erique de $M$.
\end{theo}

On rappelle que la \textit{profondeur} d'un sommet est d\'efinie comme la longueur du chemin qui relie ce sommet et la racine de l'arbre. En outre, la \textit{profondeur} d'un arbre est d\'efinie comme la valeur maximale des profondeurs de ses sommets.
\begin{exem}
 On va donner un exemple de l'op\'eration dans le th\'eor\`eme \ref{chongshu2}. On prend $\mathbb P(E)=\mathbb P^4_k=\proj\left(k[T_0,T_1,T_2,T_3,T_4]\right)$ comme le sch\'ema de base. Soient
  \[X_1=\proj\left(k[T_0,T_1,T_2,T_3,T_4]/(T_4)\right),\]
   et
   \[X_2=\proj\left(k[T_0,T_1,T_2,T_3,T_4]/(T_3(T_0^2T_1-T_2^3+T_2^2T_1))\right).\]
    Alors on a $\deg(X_1)=1$ et $\deg(X_2)=4$. Les sch\'emas $X_1$ et $X_2$ s'intersectent proprement dans $\mathbb P^4_k$. L'intersection de $X_1$ et $X_2$ admet deux composantes irr\'eductibles, not\'ees comme $Y_1$ et $Y_2$. Soient
    \[Y_1=\proj\left(k[T_0,T_1,T_2,T_3,T_4]/(T_0^2T_1-T_2^3+T_2^2T_1,T_4)\right)\]
    un \'el\'ement dans $\mathcal C(X_1\cdot X_2)$, et
     \[Y_2=\proj\left(k[T_0,T_1,T_2,T_3,T_4]/(T_3,T_4)\right)\]
     un autre \'el\'ement dans $\mathcal C(X_1\cdot X_2)$. Alors par d\'efinition, on a
     \[i(Y_1;X_1\cdot X_2;\mathbb P^4_k)=1,\;\deg(Y_1)=3;\]
     et
     \[i(Y_2;X_1\cdot X_2;\mathbb P^4_k)=1,\;\deg(Y_2)=1.\]

    On va construire deux arbres d'intersection suivants dont les racines sont $Y_1$ et $Y_2$.
  \[\xymatrix{ &Y_1\sim(\widetilde{Y}_1)\ar[ld]\ar[d]&Y_2\sim(\widetilde{Y}_2)\ar[d]\ar[rd]& \\ Y_{11}\sim(\widetilde{Y}_{11})\ar[d] &Y_{12}\sim(\widetilde{Y}_{12})\ar[rd]\ar[d]&Y_{21}&Y_{22}\\ Y_{111}& Y_{121} & Y_{122} & }\]

  On suppose que l'\'etiquette de $Y_1$ est l'hypersurface
  \[\widetilde{Y}_1=\proj\left(k[T_0,T_1,T_2,T_3,T_4]/(T_1T_3)\right),\;\deg(\widetilde{Y}_1)=2,\]
   et l'\'etiquette de $Y_2$ est
   \[\widetilde{Y}_2=\proj\left(k[T_0,T_1,T_2,T_3,T_4]/(T_2,T_0(T_1+T_0))\right).\;\deg(\widetilde{Y}_2)=2.\]
   Alors on peut confirmer que l'intersection de $Y_1$ et $\widetilde{Y}_1$ et l'intersection $Y_2$ et $\widetilde{Y}_2$ sont propres.

   Dans la suite, on consid\`ere l'arbre d'intersection ayant $Y_2$ comme racine. En fait, il a deux composantes irr\'eductbles, not\'ees comme $Y_{21}$ et $Y_{22}$. Par d\'efinition, on obtient
   \[Y_{21}=[0:1:0:0:0],\;i(Y_{21};Y_2\cdot\widetilde{Y}_2;\mathbb P^4_k)=1;\]
   et
   \[Y_{22}=[1:-1:0:0:0],\;i(Y_{22};Y_2\cdot\widetilde{Y}_2;\mathbb P^4_k)=1.\]

  Pour l'arbre dont la racine est $Y_1$, l'ensemble $\mathcal C(Y_1\cdot\widetilde{Y}_1)$ a deux \'el\'ements, not\'es comme $Y_{11}$ et $Y_{12}$ respectivement. On suppose
    \[Y_{11}=\proj\left(k[T_0,T_1,T_2,T_3,T_4]/(T_1,T_2,T_4)\right)\]
    et
    \[Y_{12}=\proj\left(k[T_0,T_1,T_2,T_3,T_4]/(T_0^2T_1-T_2^3+T_2^2T_1,T_3,T_4)\right)\]
    Alors on a
    \[i(Y_{11};Y_1\cdot\widetilde{Y}_1;\mathbb P^4_k)=3,\;\deg(Y_{11})=1;\]
    et
    \[i(Y_{12};Y_1\cdot\widetilde{Y}_1;\mathbb P^4_k)=1,\;\deg(Y_{12})=3.\]
    L'\'egalit\'e $i(Y_{11};Y_1\cdot\widetilde{Y}_1;\mathbb P^4_k)=3$ est d'apr\`es que l'anneau local en $Y_{11}$ est Cohen-Macaulay, par \cite[Proposition 7.1]{Fulton}, cette multiplicit\'e d'intersection est \'egale \`a $\ell(\O_{Y_1\cap\widetilde{Y}_1,Y_{11}})$, qui est \'egal \`a $3$. Soient
    \[\widetilde{Y}_{11}=\proj\left(k[T_0,T_1,T_2,T_3,T_4]/(T_0+T_3)\right),\;\deg(\widetilde{Y}_{11})=1\]
    l'\'etiquette de $Y_{11}$, et
    \[\widetilde{Y}_{12}=\proj\left(k[T_0,T_1,T_2,T_3,T_4]/(T_2)\right),\;\deg(\widetilde{Y}_{12})=1\]
    l'\'etiquette de $Y_{12}$.
    Alors on obtient que l'intersection de $Y_{11}$ et $\widetilde{T}_{11}$ admet une composante irr\'eductible, et que l'intersection de $Y_{12}$ et $\widetilde{Y}_{12}$ admet deux composantes irr\'eductibles not\'ees comme $Y_{121}$ et $Y_{122}$. De plus, on a
    \[Y_{111}=[1:0:0:-1:0],\;i(Y_{111};Y_{11}\cdot\widetilde{Y}_{11};\mathbb P^4_k)=1,\]
    et
    \[Y_{121}=[0:1:0:0:0],\;i(Y_{121};Y_{12}\cdot\widetilde{Y}_{12};\mathbb P^4_k)=2,\]
    et
    \[Y_{122}=[1:0:0:0:0],\;i(Y_{122};Y_{12}\cdot\widetilde{Y}_{12};\mathbb P^4_k)=1\]
    par d\'efinition directement.

  Soit $M=[0:1:0:0:0]$. On peut confirmer que les sommets $Y_{121}=Y_{21}=M$ satisfont les conditions dans le th\'eor\`eme \ref{chongshu2} consid\'er\'es comme deux sous-sch\'emas int\`egres de $\mathbb P^4_k$. Dans cet exemple, Le c\^ot\'e gauche de l'in\'egalit\'e \eqref{no auxillary scheme} \'egal \`a
    \begin{eqnarray*}
      & &i(Y_1;X_1\cdot X_2;\mathbb P^4_k)i(Y_{12};Y_1\cdot\widetilde{Y}_1;\mathbb P^4_k)i(Y_{121};Y_{12}\cdot\widetilde{Y}_{12};\mathbb P^4_k)\\
      & &+i(Y_2;X_1\cdot X_2;\mathbb P^4_k)i(Y_{21};Y_2\cdot\widetilde{Y}_2;\mathbb P^4_k)\\
      &=&3.
    \end{eqnarray*}
    De plus, comme l'hypersurface $X_1$ est r\'eguli\`ere, on a
    \[\mu_M(X_1)=1;\]
    en consid\'erant le d\'eveloppement de Taylor de l'\'equation qui d\'efinit l'hypersurface $X_2$, on obtient
    \[\mu_{M}(X_2)=3.\]
    Alors le c\^ot\'e droite de l'in\'egalit\'e \eqref{no auxillary scheme} \'egal \`a
    \[\mu_M(X_1)\mu_M(X_2)=3.\]
    Donc on a l'in\'egalit\'e
    \begin{eqnarray*}
      & &i(Y_1;X_1\cdot X_2;\mathbb P^4_k)i(Y_{12};Y_1\cdot\widetilde{Y}_1;\mathbb P^4_k)i(Y_{121};Y_{12}\cdot\widetilde{Y}_{12};\mathbb P^4_k)\\
      & &+i(Y_2;X_1\cdot X_2;\mathbb P^4_k)i(Y_{21};Y_2\cdot\widetilde{Y}_2;\mathbb P^4_k)\\
      &\geqslant&\mu_M(X_1)\mu_M(X_2),
    \end{eqnarray*}
    ce qui est un exemple du th\'eor\`eme \ref{chongshu2}.
\end{exem}
\subsection{R\'esultats pr\'eliminaires}
Dans cette partie, on introduira certains r\'esultats pr\'eliminaires pour la d\'emonstration du th\'eor\`eme \ref{chongshu2}.

\subsubsection*{Commutativit\'e et associativit\'e d'intersection}
La multiplicit\'e d'intersection satisfait \`a la commutativit\'e et la associativit\'e au sens suivant. On revoie les lecteurs \`a \cite[Proposition 8.1.1]{Fulton} pour une d\'emonstration.
\begin{theo}\label{comm-ass}
Soient $X_1,X_2,X_3$ trois sous-sch\'emas ferm\'es de dimension pure d'un sch\'ema s\'epar\'e r\'egulier $Y$ de type fini sur $\spec k$. On a les propri\'et\'es suivantes:
  \begin{description}
    \item[(i). (commutativit\'e)] pour tout $M\in\mathcal C(X_1\cdot X_2)=\mathcal C(X_2\cdot X_1)$, on a
    \[i(M;X_1\cdot X_2;Y)=i(M;X_2\cdot X_1;Y);\]
    \item[(ii). (associativit\'e)] si $X_1,X_2,X_3$ s'intersectent proprement en $M\in\mathcal C(X_1\cdot X_2\cdot X_3)$, alors on a:
    \begin{eqnarray*}
  i(M;X_1\cdot X_2\cdot X_3;Y)&=&\sum_{P\in \mathcal C(X_1\cdot X_2)}i(M;P\cdot X_3;Y)\cdot i(P;X_1\cdot X_2;Y)\\
  &=&\sum_{Q\in\mathcal C(X_2\cdot X_3)}i(M;Q\cdot X_1;Y)\cdot i(Q;X_2\cdot X_3;Y),
\end{eqnarray*}
voir \S\ref{mult of a sub-scheme} pour les notations de $\mathcal C(X_1\cdot X_2\cdot X_3)$, $\mathcal C(X_1\cdot X_2)$ et $\mathcal C(X_2\cdot X_3)$.
\end{description}
  \end{theo}
\subsubsection*{Th\'eor\`eme de B\'ezout}
Le th\'eor\`eme de B\'ezout est une description de la complexit\'e d'une intersection propre dans $\mathbb P(E)$ en termes de degr\'es rapport au fibr\'e universel.
\begin{theo}[le th\'eor\`eme de B\'ezout]\label{bezout}
  Soient $X_1,\ldots,X_r$ des sous-sch\'emas ferm\'es de dimension pure de $\mathbb P(E)$, qui s'intersectent proprement. Alors on a
  \begin{equation*}
    \sum_{Z\in\mathcal C(X_1\intersect X_r)}i(Z;X_1\intersect X_r;\mathbb P(E))\deg(Z)=\deg(X_1)\cdots\deg(X_r).
  \end{equation*}
  \end{theo}
On revoie les lecteurs \`a \cite[Proposition 8.4]{Fulton} pour plus d\'etails, voir l'\'egalit\'e (1) dans la page 145 de \cite{Fulton} aussi.
\subsubsection*{Invariance par extension de corps}
Soient $X$ un sch\'ema sur le corps $k$ et $k'/k$ une extension de corps. On d\'esigne par $X_{k'}$ le produit fibr\'e $X\times_{\spec k}\spec k'$. De plus, soit $E$ un espace $k$-vectoriel. On d\'esigne par $E_{k'}$ l'espace $k'$-vectoriel $E\otimes_{k}k'$.

Soient $X_1,\ldots,X_r$ des sous-sch\'emas ferm\'es de $\mathbb P(E)$, $M\in\mathcal C(X_1\intersect X_r)$, et $M'\in\mathcal C(M_{k'})$ (voir \S\ref{mult of a sub-scheme} pour les notations). On d\'emontrera que $M'\in\mathcal C(X_{1,k'}\intersect X_{r,k'})$ dans le lemme \ref{irreducible component change after extension of fields}. De plus, lorsque $k'/k$ est une extension galoisienne finie, on \'etudiera une relation entre $i(M;X_1\intersect X_r;\mathbb P(E))$ et $i(M';X_{1,k'}\intersect X_{r,k'};\mathbb P(E_{k'}))$. Soient $X$ un sous-sch\'ema ferm\'e de $\mathbb P(E)$, $M$ un sous-sch\'ema ferm\'e int\`egre de $X$, et $M'\in\mathcal C(M_{k'})$. On obtient une relation entre $\mu_M(X)$ et $\mu_{M'}(X_{k'})$ si $k'/k$ est galoisienne finie.

\begin{prop}\label{sumofmult}
   Soient $X$ un sous-sch\'ema ferm\'e de $\mathbb P(E)$ de dimension pure, et $Z$ un sous-sch\'ema ferm\'e int\`egre de $X$. Alors on a
    \begin{equation*}
    \deg(X)=\sum_{X'\in\mathcal C(X)}\ell_{\O_{X,X'}}(\O_{X,X'})\deg(X').
  \end{equation*}
  et
  \begin{equation*}
    \mu_Z(X)=\sum_{X'\in\mathcal C(X)}\ell_{\O_{X,X'}}(\O_{X,X'})\mu_Z(X').
  \end{equation*}
\end{prop}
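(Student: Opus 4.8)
The plan is to deduce both equalities from the decomposition of $X$ into its irreducible components, treating the degree statement via the fundamental cycle and the multiplicity statement via the associativity formula \eqref{cycleofmult-alg}. Throughout I write $L := \O_{\mathbb P(E)}(1)$ for the universal line bundle and $d := \dim(X)$.

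For the first equality, I would start from the fundamental cycle of $X$, namely
\[ [X] = \sum_{X' \in \mathcal{C}(X)} \ell_{\O_{X,X'}}(\O_{X,X'})\, [X'], \]
which is the definition of the cycle associated to $X$ in classical intersection theory. Since $X$ is of pure dimension $d$, every component $X'$ also has dimension $d$. Applying the operator $\deg\bigl(c_1(L)^{d} \cap -\bigr)$, which is additive on cycles and compatible with the degree map, and recalling that by definition $\deg(X') = \deg\bigl(c_1(L)^{d} \cap [X']\bigr)$ for each $d$-dimensional integral component, yields the first formula directly.

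For the second equality, I would localize at the generic point $\eta_Z$ of $Z$ and set $A := \O_{X,Z} = \O_{X,\eta_Z}$, with maximal ideal $\sm$, so that $\mu_Z(X) = e_{\sm, A}$. The minimal primes $\p$ of $A$ are in bijection with the irreducible components $X'$ of $X$ passing through $Z$; for such a $\p$ one has the identifications $A_{\p} = \O_{X,X'}$ and $A/\p = \O_{X', \eta_Z}$, whence $\ell_{A_\p}(A_\p) = \ell_{\O_{X,X'}}(\O_{X,X'})$ and $e_{\sm, A/\p} = \mu_Z(X')$. The associativity formula \eqref{cycleofmult-alg} applied to $M = A$ and $\a = \sm$ then gives
\[ \mu_Z(X) = e_{\sm, A} = \sum_{\p} \ell_{A_\p}(A_\p)\, e_{\sm, A/\p} = \sum_{X' \supseteq Z} \ell_{\O_{X,X'}}(\O_{X,X'})\, \mu_Z(X'), \]
where $\p$ runs over the minimal primes of $A$ satisfying $\dim(A/\p) = \dim(A)$. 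Adopting the convention $\mu_Z(X') = 0$ when $Z \not\subseteq X'$ extends the sum over all of $\mathcal{C}(X)$.

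The main technical point, and the step I expect to require the most care, is verifying that the dimension condition $\dim(A/\p) = \dim(A)$ in \eqref{cycleofmult-alg} is met by \emph{every} minimal prime of $A$, so that no component through $Z$ is silently dropped from the sum. This rests on the pure-dimensionality of $X$ together with the fact that $X$, being of finite type over a field, is catenary: then for each component $X' \supseteq Z$ one has $\dim(A/\p) = \codim(Z, X') = \dim(X') - \dim(Z) = \dim(X) - \dim(Z) = \codim(Z,X) = \dim(A)$, as required. Once this bookkeeping is in place, both formulas follow from the cited algebraic inputs without further computation.
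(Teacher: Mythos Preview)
Your proposal is correct and follows essentially the same approach as the paper: the paper also derives the multiplicity formula directly from the associativity identity \eqref{cycleofmult-alg} and refers to Fulton's fundamental-cycle computation (Example 2.5.2(b)) for the degree formula, which is exactly your argument via $[X]=\sum_{X'}\ell_{\O_{X,X'}}(\O_{X,X'})[X']$ and linearity of $\deg(c_1(L)^d\cap -)$. Your explicit verification of the dimension condition $\dim(A/\p)=\dim(A)$ using catenarity and pure-dimensionality is a detail the paper leaves implicit, but it is the right point to check.
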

\begin{proof}
Si on d\'efinit le degr\'e d'un sch\'ema projectif par la multiplicit\'e d'un id\'eal (cf. \cite[Chap. I, Proposition 7.5]{GTM52}), les deux \'egalit\'e sont des cons\'equences directes de l'\'egalit\'e \eqref{cycleofmult-alg}. Si on prend la d\'efinition de degr\'e d'un sch\'ema projectif de dimension pure par le nombre d'intersection comme ci-dessus, on revoie les lecteurs \`a \cite[Example 2.5.2 (b)]{Fulton} pour une d\'emonstration.
\end{proof}

La proposition \ref{sumofmult} sera utilis\'ee dans les d\'emonstrations des r\'esultats au-dessous.
\begin{lemm}\label{irreducible component change after extension of fields}
  Soit $k$ un corps.  Soient $X_1,\ldots,X_r$ des sous-sch\'emas ferm\'es de $\mathbb P(E)$, et $Y\in\mathcal C(X_1\intersect X_r)$. Soit $k'/k$ une extension de corps. Alors pour toute composante irr\'eductible $Y'\in\mathcal C(Y_{k'})$, on a $Y'\in\mathcal C(X_{1,k'}\intersect X_{r,k'})$. De plus, l'application canonique
  \[\bigsqcup_{Y\in\mathcal C(X_1\intersect X_r)}\mathcal C(Y_{k'})\rightarrow\mathcal C(X_{1,k'}\intersect X_{r,k'})\]
  est une bijection. Autrement dit, pour tout $Y'\in\mathcal C(X_{1,k'}\intersect X_{r,k'})$, il existe un et un unique $Y\in\mathcal C(X_1\intersect X_r)$ tel que $Y'$ soit une composante irr\'eductible de $Y_{k'}$.
\end{lemm}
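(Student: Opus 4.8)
The plan is to reduce the statement to a property of a single finite-type $k$-scheme under base change. Since the formation of a scheme-theoretic intersection is a fibre product over $\mathbb P(E)$ and base change to $k'$ commutes with fibre products, setting $W:=X_1\cap\cdots\cap X_r$ we obtain $W_{k'}=X_{1,k'}\cap\cdots\cap X_{r,k'}$, so that $\mathcal C(X_1\intersect X_r)=\mathcal C(W)$ and $\mathcal C(X_{1,k'}\intersect X_{r,k'})=\mathcal C(W_{k'})$. It therefore suffices to describe the irreducible components of $W_{k'}$ in terms of those of $W$ via the projection $\pi:W_{k'}\to W$, which is the base change of the faithfully flat morphism $\spec k'\to\spec k$ and is hence itself faithfully flat, in particular surjective and generizing (generizations lift along $\pi$). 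No hypothesis on $k'/k$ beyond its being a field extension will be used, matching the generality of the statement.

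First I would construct the map $\Phi$ underlying the claimed bijection. Let $Y'$ be an irreducible component of $W_{k'}$ with generic point $\xi$. Because $\pi$ is generizing, a maximal point of $W_{k'}$ maps to a maximal point of $W$: if $\pi(\xi)$ admitted a proper generization, it could be lifted to a proper generization of $\xi$, contradicting the maximality of $\xi$. Hence $\pi(\xi)$ is the generic point $\eta_Y$ of a unique $Y\in\mathcal C(W)$, and $\pi(Y')$ is dense in $Y$. This defines a map $\Phi:\mathcal C(W_{k'})\to\mathcal C(W)$, $Y'\mapsto Y$, characterized by $\pi(\eta_{Y'})=\eta_Y$.

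The central step, and the one requiring most care, is to identify $\Phi^{-1}(Y)$ with $\mathcal C(Y_{k'})$ for a fixed $Y\in\mathcal C(W)$ viewed as an integral closed subscheme. The essential observation is that the scheme-theoretic fibre of $\pi$ over a point $y$ is $\spec(\kappa(y)\otimes_k k')$, depending only on the residue field; since the reduced structure $Y$ and $W$ share the same residue field at $\eta_Y$ (the local ring $\O_{W,\eta_Y}$ is Artinian with residue field $\kappa(\eta_Y)=\O_{Y,\eta_Y}$), the fibre $\pi^{-1}(\eta_Y)$ coincides with the generic fibre of $Y_{k'}\to Y$ and is therefore contained in the closed subscheme $Y_{k'}\subseteq W_{k'}$. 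I would then establish both inclusions. If $Y'$ is a component of $Y_{k'}$ with generic point $\xi$, then $\xi$ maps to $\eta_Y$ by flatness of $Y_{k'}\to Y$; any strict generization $Z'\supsetneq Y'$ in $W_{k'}$ would have its generic point mapping to a generization of $\eta_Y$, hence to $\eta_Y$ itself, placing that point in $\pi^{-1}(\eta_Y)\subseteq Y_{k'}$ and contradicting the maximality of $\xi$ in $Y_{k'}$; thus $Y'$ is a component of $W_{k'}$ with $\Phi(Y')=Y$. Conversely, if $\Phi(Y')=Y$, the generic point $\xi$ of $Y'$ lies in $\pi^{-1}(\eta_Y)\subseteq Y_{k'}$, so $Y'=\overline{\{\xi\}}\subseteq Y_{k'}$, and $\xi$ being maximal in $W_{k'}$ is a fortiori maximal in $Y_{k'}$, whence $Y'$ is a component of $Y_{k'}$.

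Finally I would assemble the conclusion. The first assertion of the lemma is exactly the forward inclusion just proved: every $Y'\in\mathcal C(Y_{k'})$ is a component of $W_{k'}=X_{1,k'}\cap\cdots\cap X_{r,k'}$. The sets $\mathcal C(Y_{k'})=\Phi^{-1}(Y)$ are pairwise disjoint, since $\Phi$ is a function, and cover $\mathcal C(W_{k'})$, since $\Phi$ is everywhere defined; therefore the canonical map $\bigsqcup_{Y\in\mathcal C(W)}\mathcal C(Y_{k'})\to\mathcal C(W_{k'})$, which is precisely the assembly of these inclusions, is a bijection with inverse $Y'\mapsto(\Phi(Y'),Y')$. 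The only delicate point is the fibre-and-generization bookkeeping of the central step; everything else is a formal consequence of the faithful flatness of $\pi$.
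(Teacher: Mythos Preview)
Your proof is correct. Both you and the paper ultimately rely on the same technical input---the going-down/generization-lifting property of the faithfully flat projection $\pi:W_{k'}\to W$---but you organize the argument differently, and in a way that is cleaner.

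The paper first invokes dimension preservation $\dim(Y')=\dim(Y)$, then shows that any component $Z'$ of $W_{k'}$ is contained in some $Y'\in\mathcal C(Y_{k'})$, and finally uses a codimension inequality (via \cite[Proposition 2.3.4]{EGAIV_2} together with the catenary property of algebraic schemes) to force $Z'=Y'$. It also passes through the diagonal embedding $\Delta$ into the $r$-fold product, which is unnecessary for this purely topological statement, and it asserts that $\spec k'\to\spec k$ is \emph{finite} and faithfully flat, although the lemma as stated allows an arbitrary field extension. Your argument avoids all three detours: you work directly with $W$, you make no use of dimensions or catenarity, and you use only flatness. The fibre identification $\pi^{-1}(\eta_Y)\subseteq Y_{k'}$ is the key device that lets you run the maximality argument purely at the level of generic points. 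This buys you a proof that is valid in the stated generality and would transport to more general flat base changes; the paper's dimension-based argument is more geometric but slightly overloaded for what is needed here.
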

\begin{proof}
   D'apr\`es \cite[Proposition 3.2.7]{LiuQing}, pour tout $Y'\in\mathcal C(Y_{k'})$, on a $\dim(Y')=\dim(Y_{k'})=\dim (Y)$.

   Soit $Z'\in\mathcal C(X_{1,k'}\intersect X_{r,k'})$. On consid\`ere le morphisme de projection $\pi':\mathbb P(E_{k'})\rightarrow \mathbb P(E)$. Par d\'efinition, on a $\pi'(Z')\subseteq \bigcap_{i=1}^rX_i$, alors on en d\'eduit que le sch\'ema $\pi'(Z')$ est contenu dans un \'el\'ement dans $\mathcal C(X_1\intersect X_r)$. Par le fait que $Z'\subseteq\pi'(Z')_{k'}$, on obtient que $Z'$ est contenu dans un $Y'\in\mathcal C(Y_{k'})$, o\`u $Y\in\mathcal C(X_1\intersect X_r)$.

   Le morphisme $\spec k'\rightarrow \spec k$ \'etant fini et fid\`element plat, il en est de m\^eme du morphisme de projection $\pi:\mathbb P(E_{k'})^{\times_{k'}r}\rightarrow \mathbb P(E)^{\times_kr}$ (cf. \cite[Corollaire 2.2.13 (i)]{EGAIV_2}). Soit $Y'\in\mathcal C(Y_{k'})$. Soient $\eta$ et $\eta_0$ les points g\'en\'eriques de $\Delta(Y)$ et $\Delta(Y')$ respectivement, o\`u les $\Delta$ d\'esignent les morphismes diagonaux. D'apr\`es \cite[Proposition 2.3.4 (i)]{EGAIV_2}, le morphisme de projection $\pi$ envoie $\eta_0$ sur $\eta$. Si $Z'\in\mathcal C(X_{1,k'}\intersect X_{r,k'})$ qui est contenu dans $\Delta(Y')$, alors on a $\pi(\Delta(Z'))=\Delta(Y)$. Encore par \cite[Proposition 2.3.4 (i)]{EGAIV_2}, on obtient que la codimension de $Z'$ dans $\mathbb P(E_{k'})$ est born\'ee sup\'erieurement par celle de $Y$ dans $\mathbb P(E)$, d'o\`u $\dim(Z')\geqslant\dim(Y) = \dim(Y_{k'})$ puisque les sch\'emas alg\'ebriques sont cat\'enaires. Donc on obtient $Z'= Y'$.
\end{proof}
La proposition suivante est l'invariance de la multiplicit\'e d'intersection par une extension de corps finie. Certaines id\'ees de la d\'emonstration proviennent de \cite{Nowak_1998}.
\begin{prop}\label{basechangemultinter}
   Soient $X_1,\ldots,X_r$ des sous-sch\'emas ferm\'es de $\mathbb P(E)$, et $Y\in\mathcal C(X_1\intersect X_r)$. Soit $k'/k$ une extension galoisienne finie de corps. Alors pour toute composante irr\'eductible $Y'\in\mathcal C(Y_{k'})$ (on a $Y'\in\mathcal C(X_{1,k'}\intersect X_{r,k'})$ d'apr\`es le lemme \ref{irreducible component change after extension of fields}), l'\'egalit\'e
  \[i(Y';X_{1,k'}\intersect X_{r,k'};\mathbb P(E_{k'}))=i(Y;X_1\intersect X_r;\mathbb P(E))\]
  est v\'erifi\'ee.
\end{prop}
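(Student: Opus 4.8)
Le plan est de ramener l'\'enonc\'e \`a la formule de changement de base plat pour les multiplicit\'es de Hilbert--Samuel. On rappelle (\S\ref{mult of a sub-scheme}) que, en posant $A=\O_{X_1\times_k\cdots\times_k X_r,\Delta(Y)}$ pour l'anneau local au point g\'en\'erique $\eta_Y$ de $\Delta(Y)$ et $\mathfrak a=\mathcal I_{\eta_Y}$ pour l'id\'eal diagonal, on a $i(Y;X_1\intersect X_r;\mathbb P(E))=e_{\mathfrak a,A}$, la multiplicit\'e de $\mathfrak a$ dans $A$ (bien d\'efinie car $Y$ est une composante de l'intersection, donc $A/\mathfrak a$ est artinien). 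Les objets analogues sur $k'$ sont $A'=\O_{X_{1,k'}\times_{k'}\cdots\times_{k'}X_{r,k'},\Delta(Y')}$ et l'id\'eal diagonal $\mathfrak a'$, avec $i(Y';X_{1,k'}\intersect X_{r,k'};\mathbb P(E_{k'}))=e_{\mathfrak a',A'}$. On commencerait par noter que la formation des produits fibr\'es, de la diagonale et de l'intersection sch\'ematique commute avec le changement de base plat $-\otimes_k k'$ ; par cons\'equent $A'$ est le localis\'e de $A\otimes_k k'$ en l'id\'eal premier $\mathfrak p'$ associ\'e \`a $\Delta(Y')$, et $\mathfrak a'=\mathfrak a A'$. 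De plus, par la compatibilit\'e des diagonales avec la projection $\pi$ \'etablie dans la d\'emonstration du lemme \ref{irreducible component change after extension of fields}, l'id\'eal premier $\mathfrak p'$ est au-dessus de $\mathfrak m_A$, de sorte que $A\to A'$ est un homomorphisme local.

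On v\'erifierait ensuite que $A\to A'$ est plat et que sa fibre ferm\'ee est de longueur $1$. Comme $k'/k$ est finie (et galoisienne, donc s\'eparable), $k'$ est un $k$-module libre de type fini, d'o\`u $A\to A\otimes_k k'$ est fini et libre ; en composant avec le localis\'e $A\otimes_k k'\to A'$, on voit que $A\to A'$ est un homomorphisme local plat. Pour la fibre ferm\'ee, on calcule
\[A'/\mathfrak m_A A'=\bigl((A/\mathfrak m_A)\otimes_k k'\bigr)_{\overline{\mathfrak p}'}=\bigl(\kappa(\eta_Y)\otimes_k k'\bigr)_{\overline{\mathfrak p}'}.\]
Puisque $k'/k$ est s\'eparable, $\kappa(\eta_Y)\otimes_k k'$ est une $\kappa(\eta_Y)$-alg\`ebre finie \'etale, donc un produit fini d'extensions finies s\'eparables de $\kappa(\eta_Y)$ ; le localis\'e en $\overline{\mathfrak p}'$ s\'electionne un facteur $L$, qui est un corps. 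Ainsi $\mathfrak m_A A'=\mathfrak m_{A'}$, la fibre $A'/\mathfrak m_A A'=L$ est le corps r\'esiduel de $A'$, et $\ell_{A'}(A'/\mathfrak m_A A')=1$. En particulier la fibre est de dimension $0$, donc $\dim A'=\dim A$.

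Enfin, on conclurait en comparant les fonctions de Hilbert--Samuel. Pour tout $A$-module $M$ de longueur finie, une suite de composition et l'exactitude de $-\otimes_A A'$ (platitude) donnent $\ell_{A'}(M\otimes_A A')=\ell_A(M)\cdot\ell_{A'}(A'/\mathfrak m_A A')=\ell_A(M)$. En appliquant ceci \`a $M=A/\mathfrak a^{n}$ et en utilisant $(\mathfrak a')^{n}=\mathfrak a^{n}A'$, donc $A'/(\mathfrak a')^{n}=(A/\mathfrak a^{n})\otimes_A A'$, on obtient $\ell_{A'}(A'/(\mathfrak a')^{n})=\ell_A(A/\mathfrak a^{n})$ pour tout $n$. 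Les deux fonctions de Hilbert--Samuel co\"incident donc, ainsi que leurs coefficients dominants, ce qui donne $e_{\mathfrak a',A'}=e_{\mathfrak a,A}$, soit l'\'egalit\'e voulue des multiplicit\'es d'intersection.

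Le principal obstacle est le calcul de la fibre ferm\'ee : l'\'egalit\'e (et non seulement la proportionnalit\'e) des multiplicit\'es repose sur $\ell_{A'}(A'/\mathfrak m_A A')=1$, et c'est pr\'ecis\'ement l\`a que la s\'eparabilit\'e de $k'/k$ garantie par l'hypoth\`ese galoisienne est indispensable : pour une extension purement ins\'eparable, $\kappa(\eta_Y)\otimes_k k'$ pourrait \^etre non r\'eduit, et alors la longueur de la fibre pourrait exc\'eder $1$. Un point secondaire \`a soigner est l'identification $\mathfrak a'=\mathfrak a A'$ et le fait que $\mathfrak p'$ soit au-dessus de $\mathfrak m_A$, qui reposent tous deux sur les compatibilit\'es au changement de base d\'ej\`a utilis\'ees dans la d\'emonstration du lemme \ref{irreducible component change after extension of fields}.
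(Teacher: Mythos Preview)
Your argument is correct, and it takes a somewhat different route from the paper's. Both proofs establish the same algebraic setup: one identifies the local ring $A'=\O_{X_{1,k'}\times_{k'}\cdots\times_{k'}X_{r,k'},\Delta(Y')}$ with a localisation of $A\otimes_k k'$, checks that the diagonal ideal satisfies $\mathfrak a'=\mathfrak a A'$, and observes that the local homomorphism $A\to A'$ is flat (indeed \'etale). The paper devotes several commutative diagrams to the identification $\mathfrak a'=\mathfrak a A'$, which you pass over rather quickly; this is precisely the ``point secondaire'' you flag, and it is worth noting that the paper's careful treatment here is not entirely routine.

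Where the two arguments diverge is in the conclusion. The paper invokes a global formula of Samuel (\cite[Chap.~II, n$^\circ$~5,~f, coro.~2]{SamuelLocAlg}) giving
\[ [k':k]\,e_{\mathfrak a,A}=\sum_{Y'\in\mathcal C(Y_{k'})}[\kappa(Y'):\kappa(Y)]\,e_{\mathfrak a',A'}, \]
and then uses the transitivity of the Galois action on $\mathcal C(Y_{k'})$ to conclude that all the $e_{\mathfrak a',A'}$ are equal, whence the common value equals $e_{\mathfrak a,A}$ by the residue-degree identity $\sum_{Y'}[\kappa(Y'):\kappa(Y)]=[k':k]$. You instead compute the closed fibre of $A\to A'$ directly: separability of $k'/k$ makes $\kappa(\eta_Y)\otimes_k k'$ reduced, so the fibre is a field and the Hilbert--Samuel functions of $(A,\mathfrak a)$ and $(A',\mathfrak a')$ coincide termwise. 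This is more elementary and, as you observe, uses only the separability of $k'/k$; the Galois hypothesis is not needed in your approach. The paper's detour through the Galois orbit is what forces the stronger hypothesis in its version of the argument.
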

\begin{proof}
    D'abord, on consid\`ere le diagramme suivant:
    \[\xymatrix{\relax\mathbb P(E_{k'})\ar@/^7mm/[drr]^{\Delta_{\mathbb P(E_{k'})/k}}\ar@/_7mm/[ddr]_\pi\ar[dr]^{\Delta_{\mathbb P(E_{k'})/\mathbb P(E)}}& & \\&\mathbb P(E_{k'})^{\times_{\mathbb P(E)}r}\ar[r]\ar[d]\ar@{}|-{\square}[dr]&\mathbb P(E_{k'})^{\times_{k}r}\ar[d]\\
    &\mathbb P(E)\ar[r]^{\Delta_{\mathbb P(E)/k}}&\mathbb P(E)^{\times_{k}r},}\]
o\`u $\Delta_{\mathbb P(E_{k'})/k}$, $\Delta_{\mathbb P(E_{k'})/\mathbb P(E)}$, et $\Delta_{\mathbb P(E)/k}$ sont des morphismes diagonaux, et $\pi$ est le morphisme canonique obtenu par le changement de base $\spec k'\rightarrow \spec k$.

    D'apr\`es \cite[Proposition (1.4.5), Chap. 0]{NouveauEGA1} et \cite[Proposition (1.4.8), Chap. 0]{NouveauEGA1}, le diagramme plus haut est commutatif.

   Comme l'extension $k'/k$ est s\'eparable, le morphisme canonique $\pi:\:\mathbb P(E_{k'})\rightarrow \mathbb P(E)$ est \'etale et fini. De plus, le morphisme $\Delta_{\mathbb P(E_{k'})/\mathbb P(E)}$ est une section du morphisme de projection (\`a une coordonn\'ee arbitraire)
   \[\mathbb P(E_{k'})^{\times_{\mathbb P(E)}r}\rightarrow\mathbb P(E_{k'}),\]
   o\`u la projection ci-dessus est \'etale et s\'epar\'ee.
   D'apr\`es \cite[Corollary 3.12]{Milne}, pour tout sous-sch\'ema ferm\'e de $\mathbb P(E_{k'})$, le morphisme $\Delta_{\mathbb P(E_{k'})/\mathbb P(E)}$ est un isomorphisme dans toute composante connnexe de ce sous-sch\'ema ferm\'e. D'o\`u l'on obtient que pour tout sous-sch\'ema ferm\'e int\`egre $M$ de $\mathbb P(E)$, et tout $M'\in\mathcal C(M_{k'})$, l'id\'eal diagonal de l'anneau $\O_{X_{1,k'}\times_k\cdots\times_{k}X_{r,k'},\Delta_{\mathbb P(E_{k'})/k}(M')}$ est un module obtenu de l'id\'eal diagonal de l'anneau $\O_{X_{1}\times_k\cdots\times_{k}X_{r},\Delta_{\mathbb P(E)/k}(M)}$ par extension des scalaires.

   De plus, d'apr\`es \cite[Proposition(1.4.8), Chap. 0]{NouveauEGA1}, le diagramme
    \[\xymatrix{\relax \mathbb P(E_{k'})\ar[r]^{\id}\ar[d]^{\Delta_{\mathbb P(E_{k'})/k'}} & \mathbb P(E_{k'})\ar[d]^{\Delta_{\mathbb P(E_{k'})/k}}\\
    \mathbb P(E_{k'})^{\times_{k'}r}\ar[r]\ar[d]\ar@{}|-{\square}[dr] &\mathbb P(E_{k'})^{\times_{k}r}\ar[d]\\
    \spec k'\ar[r]&\spec (k'^{\otimes_kr}),}\]
est commutatif, o\`u $\Delta_{\mathbb P(E_{k'})/k'}$ et $\Delta_{\mathbb P(E_{k'})/k}$ sont des morphismes diagonaux. D'o\`u l'on obtient que pour tout sous-sch\'ema ferm\'e int\`egre $M'\in\mathcal C(M_{k'})$ d\'efini ci-dessus, l'id\'eal diagonal de l'anneau $\O_{X_{1,k'}\times_{k'}\cdots\times_{k'}X_{r,k'},\Delta_{\mathbb P(E_{k'})/k'}(M')}$ est un module obtenu de l'id\'eal diagonal de l'anneau $\O_{X_{1,k'}\times_k\cdots\times_{k}X_{r,k'},\Delta_{\mathbb P(E_{k'})/k}(M')}$ par extension des scalaires sous le changement de base ci-dessus. Par cons\'equent, l'id\'eal diagonal de l'anneau $\O_{X_{1,k'}\times_{k'}\cdots\times_{k'}X_{r,k'},\Delta_{\mathbb P(E_{k'})/k'}(M')}$ est un module obtenu de l'id\'eal diagonal de l'anneau $\O_{X_{1}\times_k\cdots\times_{k}X_{r},\Delta_{\mathbb P(E)/k}(M)}$ par extension scalaire par rapport au changement de base $\spec k'\rightarrow\spec k$.

Soient $\mathcal I$ le faisceau d'id\'eaux de $\O_{X_1\times_k\cdots\times_k X_r}$ correspondant au sous-sch\'ema ferm\'e $X_1\cap\cdots\cap X_r$ via le morphisme diagonal, et $\mathcal I'$ le faisceau d'id\'eaux de $\O_{X_{1,k'}\times_{k'}\cdots\times_{k'} X_{r,k'}}$ correspondant au sous-sch\'ema ferm\'e $X_{1,k'}\cap\cdots\cap X_{r,k'}$ via le morphisme diagonal (voir \S1.2.4 pour la d\'efinition). On d\'esigne par $\Delta$ les morphismes diagonaux d\'efinis ci-dessus pour simplifier. De plus, soient $\eta$ le point g\'en\'erique du $\Delta(Y)$, et $\eta'$ le point g\'en\'erique de $\Delta(Y')$. Par l'argument ci-dessus, on a
\[\mathcal I_\eta\O_{X_{1,k'}\times_{k'}\cdots\times_{k'}X_{r,k'},\Delta(Y')}=\mathcal I'_{\eta'}\O_{X_{1,k'}\times_{k'}\cdots\times_{k'}X_{r,k'},\Delta(Y')}\]
 comme id\'eaux de l'anneau $\O_{X_{1,k'}\times_{k'}\cdots\times_{k'}X_{r,k'},\Delta(Y')}$.

On peut confirmer que $\O_{X_{1,k'}\times_{k'}\cdots\times_{k'}X_{r,k'},\Delta(Y')}$ est un $\O_{X_1\times_k\cdots\times_k X_r,\Delta(Y)}$-module plat, car le morphisme canonique
\begin{equation}\label{etale embedding}
\O_{X_1\times_k\cdots\times_k X_r,\Delta(Y)}\hookrightarrow\O_{X_{1,k'}\times_{k'}\cdots\times_{k'}X_{r,k'},\Delta(Y')}
 \end{equation}
  est une composition d'une extension de corps et une localisation. De plus, comme l'extension $k'/k$ est s\'eparable, le morphisme \eqref{etale embedding} est \'etale.

  On d\'esigne par $\kappa(Y)$ le corps r\'esiduel du point g\'en\'erique de $\Delta(Y)$ vu comme point sch\'ematique de $X_1\times_k\cdots\times_k X_r$, et par $\kappa(Y')$ le corps r\'esiduel du point g\'en\'erique de $\Delta(Y')$ vu comme point sch\'ematique de $X_{1,k'}\times_{k'}\cdots\times_{k'}X_{r,k'}$. Comme le morphisme \eqref{etale embedding} est \'etale, d'apr\`es \cite[Proposition 3.2(e)]{Milne}, on a le diagramme cart\'esien suivant:
  \[\xymatrix{\coprod\limits_{Y'\in\mathcal C(Y_{k'})}\spec\kappa(Y')\ar[r]\ar[d]\ar@{}|-{\square}[dr]&\spec\kappa(Y)\ar[d]\\
  \spec\left(\O_{X_1\times_k\cdots\times_k X_r,\Delta(Y)}\otimes_{k}k'\right)\ar[r]\ar[d]\ar@{}|-{\square}[dr]&\spec\O_{X_1\times_k\cdots\times_k X_r,\Delta(Y)}\ar[d]\\
  \spec k'\ar[r]&\spec k.}\]
  Donc on obtient l'\'egalit\'e
  \begin{equation}\label{sum of residue field}
\sum_{Y'\in\mathcal C(Y_{k'})}[\kappa(Y'):\kappa(Y)]=[k':k],
\end{equation}
car le changement de base est \'etale.

D'apr\`es \cite[Chap. II, n$^\circ$ 5, f, coro. 2]{SamuelLocAlg}, on obtient
\begin{equation*}
[k':k]e_{\mathcal I_\eta,\O_{X_1\times_k\cdots\times_k X_r,\Delta(Y)}}=\sum_{Y'\in\mathcal C(Y_{k'})}[\kappa(Y'):\kappa(Y)]e_{\mathcal I'_{\eta'},\O_{X_{1,k'}\times_{k'}\cdots\times_{k'}X_{r,k'},\Delta(Y')}}.
\end{equation*}
On en d\'eduit
\begin{eqnarray}\label{sum of multiplicity with residue field}
  & &[k':k]i(Y;X_1\intersect X_r;\mathbb P(E))\\
  &=&\sum_{Y'\in\mathcal C(Y_{k'})}[\kappa(Y'):\kappa(Y)]i(Y';X_{1,k'}\intersect X_{r,k'};\mathbb P(E_{k'}))\nonumber
\end{eqnarray}
par la d\'efinition de la multiplicit\'e d'intersection (voir \S\ref{mult of a sub-scheme} pour la d\'efinition).

 Comme $X_{i,k'}$ est $\gal(k'/k)$-invariant pour tout $i=1,\ldots,r$, et tous les \'el\'ements dans $\mathcal C(Y_{k'})$ sont dans la m\^eme orbite galoisienne d'apr\`es \cite[Proposition A.14]{Mustata-notes} car l'extension $k'/k$ est galoisienne, la fontion $i(\cdot\;;X_{1,k'}\intersect X_{r,k'};\mathbb P(E_{k'}))$ est constante sur $\mathcal C(Y_{k'})$. Donc d'apr\`es les \'egalit\'es \eqref{sum of residue field} et \eqref{sum of multiplicity with residue field}, on a l'assertion.

\end{proof}

\begin{prop}\label{mult of point under base change}
  Soient $X$ un sous-sch\'ema ferm\'e de $\mathbb P(E)$, et $Y$ un sous-sch\'ema ferm\'e int\`egre de $X$. Soit $k'/k$ une extension galoisienne finie de corps. Pour tout $Y'\in \mathcal C(Y_{k'})$, on a
   \[\mu_Y(X)=\mu_{Y'}(X_{k'}).\]
\end{prop}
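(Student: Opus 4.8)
Le plan est de reproduire la strat\'egie de la d\'emonstration de la proposition \ref{basechangemultinter}, en rempla\c{c}ant la multiplicit\'e d'intersection par la multiplicit\'e de l'anneau local. Posons $A=\O_{X,Y}$, l'anneau local de $X$ au point g\'en\'erique de $Y$, d'id\'eal maximal $\mathfrak m$ et de corps r\'esiduel $\kappa(Y)$; par d\'efinition (voir \S\ref{mult of a sub-scheme}) on a $\mu_Y(X)=e_{\mathfrak m,A}$. Comme l'extension $k'/k$ est finie et s\'eparable, le morphisme $\spec k'\to\spec k$ est fini et \'etale, donc le morphisme canonique $A\to A\otimes_k k'$ l'est aussi. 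L'anneau $A\otimes_k k'$ \'etant fini sur $A$, il est semi-local, et par le lemme \ref{irreducible component change after extension of fields} ses localis\'es aux id\'eaux maximaux sont exactement les anneaux $\O_{X_{k'},Y'}$ pour $Y'\in\mathcal C(Y_{k'})$, de corps r\'esiduel $\kappa(Y')$.

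D'abord, je v\'erifierais que chaque morphisme $A\to\O_{X_{k'},Y'}$ est non ramifi\'e, c'est-\`a-dire que $\mathfrak m\,\O_{X_{k'},Y'}$ co\"incide avec l'id\'eal maximal $\mathfrak m_{Y'}$ de $\O_{X_{k'},Y'}$: cela d\'ecoule du caract\`ere \'etale du changement de base, la fibre $A/\mathfrak m\otimes_k k'=\kappa(Y)\otimes_k k'$ \'etant un produit d'extensions s\'eparables de $\kappa(Y)$. On en d\'eduit
\[e_{\mathfrak m\,\O_{X_{k'},Y'},\,\O_{X_{k'},Y'}}=e_{\mathfrak m_{Y'},\,\O_{X_{k'},Y'}}=\mu_{Y'}(X_{k'}).\]

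Ensuite, j'appliquerais la formule de changement de base plat pour la multiplicit\'e \cite[Chap. II, n$^\circ$ 5, f, coro. 2]{SamuelLocAlg}, comme dans la proposition \ref{basechangemultinter} mais avec l'id\'eal maximal $\mathfrak m$ \`a la place de l'id\'eal diagonal, ce qui donne
\[[k':k]\,\mu_Y(X)=\sum_{Y'\in\mathcal C(Y_{k'})}[\kappa(Y'):\kappa(Y)]\,\mu_{Y'}(X_{k'}).\]
Pour conclure, j'utiliserais deux faits d\'ej\`a \'etablis dans la d\'emonstration de la proposition \ref{basechangemultinter}: d'une part l'\'egalit\'e \eqref{sum of residue field}, soit $\sum_{Y'\in\mathcal C(Y_{k'})}[\kappa(Y'):\kappa(Y)]=[k':k]$; d'autre part, comme $X_{k'}$ est $\gal(k'/k)$-invariant et que le groupe $\gal(k'/k)$ agit transitivement sur $\mathcal C(Y_{k'})$ (cf. \cite[Proposition A.14]{Mustata-notes}), la quantit\'e $\mu_{Y'}(X_{k'})$ ne d\'epend pas du choix de $Y'\in\mathcal C(Y_{k'})$. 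En notant $\mu$ cette valeur commune, la formule pr\'ec\'edente devient $[k':k]\,\mu_Y(X)=\mu\,[k':k]$, d'o\`u $\mu_Y(X)=\mu=\mu_{Y'}(X_{k'})$.

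La partie d\'elicate sera la justification rigoureuse de l'emploi de la formule de \cite{SamuelLocAlg}: il faut s'assurer que $A\otimes_k k'$ est fini et plat sur $A$ et surtout que l'id\'eal maximal demeure maximal apr\`es extension (non-ramification), ce qui est pr\'ecis\'ement l'endroit o\`u la s\'eparabilit\'e de $k'/k$ est indispensable. Le reste de l'argument suit fid\`element le sch\'ema de la proposition \ref{basechangemultinter}.
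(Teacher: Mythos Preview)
Your proposal is correct and follows essentially the same approach as the paper's own proof: both arguments observe that the morphism $\O_{X,Y}\to\O_{X_{k'},Y'}$ is \'etale (hence unramified, so $\mathfrak m\,\O_{X_{k'},Y'}=\mathfrak m_{Y'}$), apply \cite[Chap.~II, n$^\circ$ 5, f, coro.~2]{SamuelLocAlg} to obtain the weighted sum formula, use the residue-degree identity $\sum_{Y'}[\kappa(Y'):\kappa(Y)]=[k':k]$, and conclude via the transitivity of the $\gal(k'/k)$-action on $\mathcal C(Y_{k'})$. The only cosmetic difference is that the paper re-derives the residue-degree identity from a cartesian diagram (labelled \eqref{sum of residue field2}) rather than citing \eqref{sum of residue field} from the previous proposition.
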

\begin{proof}
 On utilise la m\'ethode similaire \`a la d\'emonstration de la proposition \ref{basechangemultinter}. D'apr\`es \cite[Proposition 3.2.7]{LiuQing}, pour toute composante irr\'eductible $Y'\in\mathcal C(Y_{k'})$, on a $\dim(Y')=\dim (Y)$. Toutes les $Y'\in\mathcal C(Y_{k'})$ sont isomorphes comme $k'$-sch\'emas d'apr\`es \cite[Proposition A.14]{Mustata-notes}. De plus, on peut confirmer que $\O_{X_{k'},Y'}$ est un $\O_{X,Y}$-module plat, car le morphisme canonique
\begin{equation}\label{etale embedding2}
\O_{X,Y}\hookrightarrow\O_{X_{k'},Y'}
 \end{equation}
  est une composition d'une extension de corps et une localisation. De plus, comme $k'/k$ est une extension s\'eparable, le morphisme \eqref{etale embedding2} est \'etale.

  On d\'esigne par $\kappa(Y)$ le corps r\'esiduel de l'anneau $\O_{X,Y}$, et par $\kappa(Y')$ le corps r\'esiduel de l'anneau $\O_{X_{k'},Y'}$. D'apr\`es \cite[Proposition 3.2(e)]{Milne}, on a le diagramme cart\'esien suivant:
  \[\xymatrix{\coprod\limits_{Y'\in\mathcal C(Y_{k'})}\spec\kappa(Y')\ar[r]\ar[d]\ar@{}|-{\square}[dr]&\spec\kappa(Y)\ar[d]\\
  \spec\left(\O_{X,Y}\otimes_{k}k'\right)\ar[r]\ar[d]\ar@{}|-{\square}[dr]&\spec\O_{X,Y}\ar[d]\\
  \spec k'\ar[r]&\spec k.}\]
  Donc on a l'\'egalit\'e
  \begin{equation}\label{sum of residue field2}
\sum_{Y'\in\mathcal C(Y_{k'})}[\kappa(Y'):\kappa(Y)]=[k':k],
\end{equation}
car le changement de base est \'etale.

Soient $\sm_{\O_{X,Y}}$ l'id\'eal maximal de l'anneau $\O_{X,Y}$, et $\sm_{X_{k'},Y'}$ l'id\'eal maximal de l'anneau $\O_{X_{k'},Y'}$. Alors on a $\sm_{X_{k'},Y'}=\O_{X_{k'},Y'}\sm_{\O_{X,Y}}$ comme le morphisme \eqref{etale embedding2} est \'etale. D'apr\`es \cite[Chap. II, n$^\circ$ 5, f, coro. 2]{SamuelLocAlg}, on a
\begin{equation*}
[k':k]e_{\sm_{X,Y},\O_{X,Y}}=\sum_{Y'\in\mathcal C(Y_{k'})}[\kappa(Y'):\kappa(Y)]e_{\sm_{X_{k'},Y'},\O_{X_{k'},Y'}}.
\end{equation*}
On en d\'eduit
\begin{equation}\label{sum of multiplicity with residue field2}
  [k':k]\mu_Y(X)=\sum_{Y'\in\mathcal C(Y_{k'})}[\kappa(Y'):\kappa(Y)]\mu_{Y'}(X_{k'}).
\end{equation}

 Comme $X_{k'}$ est $\gal(k'/k)$-invariant pour tout $i=1,\ldots,r$, et tous les \'el\'ements dans $\mathcal C(Y_{k'})$ sont dans la m\^eme orbite galoisienne d'apr\`es \cite[Proposition A.14]{Mustata-notes} car l'extension $k'/k$ est galoisienne, alors la fontion $\mu_{(\ndot)}(X_{k'})$ est constante sur $\mathcal C(Y_{k'})$. Donc d'apr\`es les \'egalit\'es \eqref{sum of residue field2} et \eqref{sum of multiplicity with residue field2}, on a l'assertion.

\end{proof}

\subsubsection*{Comparaision des multiplicit\'es}
On appelle \textit{sous-sch\'ema ferm\'e $k$-lin\'eaire de $\mathbb P(E)$} (ou \textit{sous-sch\'ema lin\'eaire ferm\'e de $\mathbb P(E)$} pour simplifier s'il n'a y pas d'ambigu\"it\'e sur le corps de base) de dimension $d$ toute intersection compl\`ete de $n-d$ $k$-hyperplans de $\mathbb P(E)$. On peut d\'emontrer qu'il est un sous-sch\'ema ferm\'e int\`egre de $\mathbb P(E)$ de degr\'e $1$ par rapport au fibr\'e universel.
\begin{defi}[Cylindre]\label{def of cylindre}
Soient $X$ un sous-sch\'ema ferm\'e de $\mathbb P(E)$ de dimension pure $d$, o\`u $d<n=\rg_k(E)-1$, et $P$ un point dans $X(k)$. Soit $L$ un sous-sch\'ema ferm\'e de $\mathbb P(E)$. On dit que $X$ et $L$ \textit{s'intersectent seulement au voisinage de $P$} si $L$ contient $P$ et si toute composante irr\'eductible de $X\cap L$ passant par $P$ se r\'eduit \`a $\{P\}$. Dans le reste de la d\'efinition, on fixe un sous-sch\'ema $k$-lin\'eaire ferm\'e $L$ de $\mathbb P(E)$ tel que $X$ et $L$ s'intersectent seulement au voisinage de $P$.

 Dans la suite, on d\'efinit une application rationnelle $\phi:\mathbb P(E)\times_k\mathbb P(E)\dashrightarrow \mathbb P(E)$. Le point $P\in\mathbb P(E)(k)$ correspond \`a un homomorphisme surjectif $E\rightarrow\O_{\mathbb P(E)}(1)|_P$. Soit $H_P=\ker(E\rightarrow\O_{\mathbb P(E)}(1)|_P)$. On fixe une application $k$-lin\'eaire injective $\psi:k\rightarrow E$. On d\'esigne par $
U_\psi=\mathbb P(E)\smallsetminus V(\psi)$, o\`u $V(\psi)$ est l'hyperplan d\'efini par l'application $k$-lin\'eaire $\psi$. On suppose que $V(\psi)$ ne contient ni le point $P$ ni le point g\'en\'erique de $X$.

Si $R$ est une $k$-alg\`ebre, alors $U_\psi(R)$ est l'ensemble des applications $R$-lin\'eaires $f:E\otimes_kR\rightarrow R$ telles que la composition des morphismes
\[\begin{CD}
  R@>\psi\otimes\Id>>E\otimes_kR@>f>>R
\end{CD}\]
soit l'application d'identit\'e de $R$. Cet ensemble est en bijection fonctorielle (en $R$) \`a l'ensemble des applications $R$-lin\'eaires de $H_P$ vers $R$. Ainsi on peut identifier le $k$-sch\'ema $U_\psi$ \`a l'espace affine $\mathbb A(H_P)$. La coordonn\'ee affine du point $P\in U_\psi$ dans $U_\psi(H_P)$ est $0\in H_P^\vee$.

L'espace affine $\mathbb A(H_P)$ est un sch\'ema en groupes en consid\'erant la loi d'addition canonique $\phi$
\[\phi:\mathbb A(H_P)\times_k\mathbb A(H_P)\rightarrow \mathbb A(H_P)\]
qui envoie tout point $(a,b)$ sur $a+b$.

L'adh\'erence Zariski $Y$ de $\phi(X\times_kL)$ dans $\mathbb P(E)$, qui est de dimension $m+d$ (voir la remarque \ref{the dimension of cylinder} ci-dessous pour une d\'emonstration), est appel\'ee le \textit{cylindre} passant par $X$ de la direction $L$ relativement \`a  $P$. On remarque que la classe rationnelle de $\phi$ et donc le cylindre ne d\'epend pas du choix de $\psi$.
\end{defi}
\begin{rema}\label{the dimension of cylinder}
On d\'emontre que la dimension du cylindre dans la d\'efinition \ref{def of cylindre} est $m+d$. Avec toutes les notations dans la d\'efinition \ref{def of cylindre}, comme $\dim(X\times_kL)=m+d$, on a $\dim (Y)\leqslant m+d$ (cf. \cite[Corollary 3.3.14]{LiuQing}).

Pour l'in\'egalit\'e inverse, on prend un sous-sch\'ema $k$-lin\'eaire ferm\'e $L'$ de $\mathbb P(E)$ de dimension $n-m$ qui intersecte $L$ dans $\mathbb P(E)$ en le point $\{P\}$ seulement. Le morphisme $\phi|_{(U_\psi\cap L)\times_k( U_\psi\cap L')}:(U_\psi\cap L)\times_k(U_\psi\cap L')\rightarrow U_\psi$ est un isomorphisme de sch\'emas. Alors on peut construire un $\overline k$-morphisme $\theta:X_{\overline k}\rightarrow L'_{\overline k}$, tel que $\dim(\theta(X))=d$ et l'image inverse de tout $k$-point de $\theta(X)$ par rapport \`a $\theta$ est un ensemble fini. Alors on peut prendre un sous-ensemble $X'$ de $X_{\overline k}$ de dimension $d$ tel que $\theta:X'\rightarrow L'_{\overline k}$ soit une bijection. Donc le morphisme $\phi_{\overline k}|_{(X'\cap U_{\psi})\times_{k} (L\cap U_{\psi})}$ est une immersion, alors on a $\dim(\phi(X'\times_{\overline k}L_{\overline k}))=m+d$. De plus, on a $\phi(X'\times_{\overline k}L_{\overline k})\subseteq Y_{k'}$ par d\'efinition.

  On a d\'emontr\'e que $Y_{\overline k}$ contient un sous-ensemble de dimension $m+d$. Comme $\dim(Y)=\dim(Y_{\overline k})$ d'apr\`es \cite[Proposition 3.2.7]{LiuQing}, on obtient l'in\'egalit\'e $\dim (Y)\geqslant m+d$, ce qui termine la d\'emonstration.
\end{rema}
Avec les notations ci-dessus, on a la proposition suivante:
\begin{prop}\label{cylindre}
Soit $U$ un sous-sch\'ema ferm\'e int\`egre de $\mathbb P(E)$ tel que $U^{\mathrm{reg}}(k)\neq\emptyset$. Soit $\dim(U)<m<n+dim(U)$ un entier. On fixe un point $P\in U^{\mathrm{reg}}(k)$. Alors il existe un cylindre $U_1$ de dimension $n+\dim(U)-m$ dont la direction est d\'efinie par un sous-sch\'ema $k$-lin\'eaire ferm\'e $L$ de $\mathbb P(E)$ de dimension $n-m$ passant par $P$ tel que, pour tout sous-sch\'ema ferm\'e $V$ de dimension pure $m$ de $\mathbb P(E)$ qui contient $U$, si $L$ intersecte $V$ proprement en le point $P$, alors le cylindre $U_1$ intersecte $V$ proprement en $U$. De plus, on a
\begin{equation*}
  \mu_U(V)=i(U;U_1\cdot V;\mathbb P(E))
\end{equation*}
et
\[\mu_Q(V)=\mu_U(V)\]
pour tout $Q\in U^{\mathrm{reg}}(k)$. Voir \S\ref{multiplicity of local ring} pour la notation de $\mu_U(V)$.
\end{prop}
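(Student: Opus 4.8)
The plan is to reduce the whole statement to a computation in the local ring $A=\O_{V,U}$ at the generic point $\eta_U$ of $U$, whose maximal ideal I write $\sm$ and whose dimension is $\dim(A)=m-\dim(U)$. By definition (see \S\ref{multiplicity of local ring}) one has $\mu_U(V)=e_{\sm,A}$. The geometric key is to read off the cylinder locally: in the affine chart $U_\psi\cong\mathbb A(H_P)$ of Definition \ref{def of cylindre}, with $P$ at the origin and $L$ a linear subspace through the origin, the cylinder is $U_1=\pi^{-1}(\pi(U))$, where $\pi$ is the linear projection with kernel $L$. Indeed $U_1=U+L$ is invariant under translation by $L$, and a linear form is constant along the $L$-cosets exactly when it factors through $\pi$. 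Hence $U_1$ has codimension $m-\dim(U)$ and is cut out near $U$ by the $m-\dim(U)$ equations $g_1,\dots,g_{m-\dim(U)}$ obtained by pulling back local equations of $\pi(U)$ at $\pi(P)$.

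First I would construct $L$ and establish the proper intersection. Since $P\in U^{\mathrm{reg}}(k)$, I choose the $k$-linear subspace $L\ni P$ of dimension $n-m$ so that it meets the tangent space $T_PU$ only at $P$; this is possible because $\dim(U)+(n-m)<n$, and it makes $\pi|_U$ an immersion near $P$, so that $\pi(U)$ is smooth of dimension $\dim(U)$ at $\pi(P)$ and the $g_i$ form a regular sequence. Remark \ref{the dimension of cylinder} then yields $\dim(U_1)=n+\dim(U)-m$. For properness, note that the fibre of $\pi|_V$ over the origin is exactly $L\cap V$, since $\pi^{-1}(0)=L$; thus the hypothesis that $L$ meets $V$ properly at $P$ says precisely that $P$ is isolated in that fibre. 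By upper semicontinuity of fibre dimension, $\pi|_V$ is quasi-finite in a neighbourhood of $P$, so $U_1\cap V=(\pi|_V)^{-1}(\pi(U))$ has dimension $\dim(U)$ along $U$, which shows that $U$ is a proper component of $U_1\cdot V$.

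The heart of the matter is the equality $\mu_U(V)=i(U;U_1\cdot V;\mathbb P(E))$. Set $\q=(g_1,\dots,g_{m-\dim(U)})A\subseteq\sm$. Since $U_1$ is regular along $U$ and meets $V$ properly there, its defining equations form a regular sequence and the intersection multiplicity of \S\ref{mult of a sub-scheme} reduces to the ideal multiplicity, $i(U;U_1\cdot V;\mathbb P(E))=e_{\q,A}$. As $\q$ is generated by a system of parameters and $\q\subseteq\sm$, inequality \eqref{smaller ideal with bigger multiplicity} gives $e_{\q,A}\geqslant e_{\sm,A}$; the remaining and decisive point is that, for a suitable $L$, the ideal $\q$ is a \emph{reduction} of $\sm$, i.e. $e_{\q,A}=e_{\sm,A}$. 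This is the classical statement that the multiplicity of $A$ is the intersection number of the tangent cone of $V$ along $U$ with a general linear space of complementary dimension, equivalently that a general system of $\dim(A)$ linear forms generates a minimal reduction of $\sm$; granting it one obtains $i(U;U_1\cdot V;\mathbb P(E))=e_{\q,A}=e_{\sm,A}=\mu_U(V)$.

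I expect this reduction step to be the main obstacle, for two reasons. First, minimal reductions generated by general linear forms require an infinite residue field, whereas $L$ is forced to be defined over $k$, which may be finite; I would circumvent this by passing to $\overline k$ (or a large enough finite extension), choosing there a linear space in general position so that the base change of $\q$ becomes a minimal reduction, and descending the resulting equalities by the invariance of $\mu$ and of the intersection multiplicity under finite extensions proved in Propositions \ref{mult of point under base change} and \ref{basechangemultinter}. Second, one must transfer the genericity encoded by ``$L$ meets $V$ properly at $P$'' from the closed point $P$ to the generic point $\eta_U$, which is exactly what the $L$-translation invariance of the cylinder supplies, the single projection $\pi$ governing the fibres over both $\pi(P)$ and the generic point of $\pi(U)$. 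As for the last assertion, the function $\xi\mapsto\mu_\xi(V)$ is upper semicontinuous and equal to its generic value $\mu_U(V)$ on a dense open of $U$; at the chosen point $P$, and at any $Q\in U^{\mathrm{reg}}(k)$ where the cylinder still meets $V$ properly, the same local computation gives $\mu_Q(V)=\mu_U(V)$.
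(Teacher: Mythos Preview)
The paper does not give its own proof; after the statement it simply sends the reader to \cite[Chap.~II, \S 6, n$^\circ$~2, b)]{Samuel}. Your outline follows exactly Samuel's route: read the cylinder as $U_1=\pi^{-1}(\pi(U))$ for the linear projection $\pi$ with kernel $L$, identify $i(U;U_1\cdot V;\mathbb P(E))$ with $e_{\q,A}$ for the parameter ideal $\q$ generated by the pulled-back equations in $A=\O_{V,U}$, and reduce the problem to showing that $\q$ is a reduction of $\sm$. So strategically you are on the same track as the reference the paper cites.

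There is, however, a real gap in your argument, and it is not the finite-field issue you highlight. The residue field of $A=\O_{V,\eta_U}$ is $\kappa(\eta_U)$, which is infinite as soon as $\dim U>0$, so the existence of minimal reductions at $\eta_U$ is unproblematic. The genuine difficulty is that your chosen condition on $L$ --- transversality to $T_PU$ --- depends only on $U$, whereas whether $\q$ is a reduction of $\sm$ depends on $V$. Concretely, take $U=\{x=y=0\}\subset\mathbb A^3$, $P=0$, $L=\{y=z=0\}$ (transverse to $T_PU$), and $V=\{y^2=x^3\}$: then $L$ meets $V$ properly at $P$, yet $i(U;U_1\cdot V;\mathbb P(E))=\ell\big(k(z)[x,y]/(y,y^2-x^3)\big)=3$ while $\mu_U(V)=2$. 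The hypothesis ``$L$ meets $V$ properly at $P$'' guarantees only that $\q$ is $\sm$-primary; for $e_{\q,A}=e_{\sm,A}$ one needs the stronger condition that $L$ avoids the projectivised tangent cone of $V$ along $U$, i.e.\ that the images of the defining linear forms of $L$ generate a minimal reduction. Samuel's argument imposes precisely this extra genericity on $L$ relative to $V$; the proposition's quantifier ``there exists $U_1$ \dots\ such that for all $V$'' should be read with this in mind (and indeed, in the paper's applications the linear space is chosen \emph{after} the finitely many $V=Y'$ are fixed). Your sketch becomes correct once you replace ``$L$ transverse to $T_PU$'' by ``$L$ generic with respect to the tangent cone of $V$ along $U$'' as the source of the reduction equality.
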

 On revoie les lecteurs au deuxi\`eme paragraphe de \cite[Chap. II \S6, n$^\circ$ 2, b)]{Samuel} pour une d\'emonstration de la proposition \ref{cylindre}. L'auteur de \cite{Samuel} a implicitement utilis\'e la condition $U^{\mathrm{reg}}(k)\neq\emptyset$ dans la d\'emonstration sans la pr\'eciser dans l'\'enonc\'e.
 \begin{rema}
   Soient $X_1,\ldots, X_r$ des sous-sch\'ema ferm\'es de la m\^eme dimension pure de $\mathbb P(E)$, et $U$ un sous-sch\'ema ferm\'e int\`egre de $X_1,\ldots,X_r$. Par la m\'ethode dans \cite{Gabber_Liu_Lorenzini1} et \cite{Gabber_Liu_Lorenzini2}, on peut construire un sch\'ema $U_1$ d'intersection compl\`ete qui intersecte tous les $X_1,\ldots,X_r$ proprement en la composante irr\'eductible $U$, tel que l'\'egalit\'e
   \[\mu_U(X_i)=i(U;U_1\cdot X_i;\mathbb P(E))\]
   soit v\'erifi\'ee pour tout $i=1,\ldots,r$ d'apr\`es le lemme d'\'evitement (en anglais c'est "avoidance lemma"). Compar\'e avec la proposition \ref{cylindre}, on n'a pas besoin de supposer que $U^{\mathrm{reg}}(k)\neq\emptyset$ et qu'il existe le sous-sch\'ema $k$-lin\'eaire $L$ satisfaisant la condition dans la proposition \ref{cylindre}. Si on admet l'assertion, on peut montrer le th\'eor\`eme \ref{chongshu2} sans la condition que $k$ est un corps parfait.
 \end{rema}
\begin{defi}
Soit $X$ un sch\'ema. On dit qu'une propri\'et\'e d\'epandant d'un point de $X$ est \textit{vraie pour presque tout point de $X$} s'il existe un sous-ensemble dense $U$ de $X$, tel que cette propri\'et\'e soit vraie pour tout point dans $U$.
\end{defi}
Si le sch\'ema $X$ est irr\'eductible, $X^{\mathrm{reg}}$ est dense dans $X$ si $X^{\mathrm{reg}}\neq\emptyset$. On a le corollaire de la proposition \ref{cylindre} suivante.
\begin{coro}\label{sub}
Soit $X$ un sous-sch\'ema ferm\'e de $\mathbb P(E)$. Soient $Y$ et $Z$ deux sous-sch\'emas ferm\'es int\`egres de $X$, o\`u $Z\subseteq Y$ et $Z^{\mathrm{reg}}(k)\neq\emptyset$. Alors on a $\mu_Y(X)\leqslant\mu_Z(X)$. De plus, pour preque tout point $P$ de $Y$, on a $\mu_P(X)=\mu_Y(X)$.
\end{coro}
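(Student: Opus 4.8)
The plan is to read both assertions off the cylinder construction of Proposition~\ref{cylindre}, converting the comparison of $\mu_Y(X)$ and $\mu_Z(X)$ into a comparison of two intersection multiplicities along cylinders that share a common direction, and then to conclude by the associativity formula of Theorem~\ref{comm-ass} together with the positivity of intersection multiplicities. First I would dispose of the trivial case $Z=Y$ and assume $Z\subsetneq Y$, so that $\dim Z<\dim Y$. Next, using Proposition~\ref{sumofmult} (equivalently the local formula \eqref{cycleofmult-alg}), I would reduce to the case where $X$ is of pure dimension $m$, since $\mu_Z(X)$ and $\mu_Y(X)$ only involve the top-dimensional components of $X$ through $\eta_Z$ and $\eta_Y$, and every component through $Y$ passes through $Z$. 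Finally, invoking the base-change invariance of Propositions~\ref{basechangemultinter} and~\ref{mult of point under base change}, I would pass to a finite Galois extension $k'/k$ over which the relevant integral subschemes acquire regular rational points; this alters neither multiplicity nor the generic multiplicity along $Y$, so I may assume $Z^{\mathrm{reg}}(k)\neq\emptyset$ and $Y^{\mathrm{reg}}(k)\neq\emptyset$, and I fix $P\in Z^{\mathrm{reg}}(k)$.

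For the inequality I would apply Proposition~\ref{cylindre} twice with one and the same direction. Taking $U=Z$, $V=X$ and the point $P$, I obtain a $k$-linear subspace $L\ni P$ of dimension $n-m$ meeting $X$ properly at $P$, together with the cylinder $U_1=\overline{\phi(Z\times_kL)}$ satisfying $\mu_Z(X)=i(Z;U_1\cdot X;\mathbb P(E))$. Forming the cylinder through $Y$ in the same direction, $U_1^Y=\overline{\phi(Y\times_kL)}\supseteq U_1$ of dimension $\dim U_1+(\dim Y-\dim Z)$, Proposition~\ref{cylindre} applied to $U=Y$ (at a suitable regular rational point of $Y$ and for a generic common direction $L$ meeting $X$ properly there) gives $\mu_Y(X)=i(Y;U_1^Y\cdot X;\mathbb P(E))$, with $Y\in\mathcal C(U_1^Y\cdot X)$. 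I would then produce an auxiliary subscheme $N$ enjoying the two transversality properties $U_1=U_1^Y\cap N$ (a proper component of multiplicity one) and $Z$ a proper component of $Y\cap N$, so that $i(Z;Y\cdot N;\mathbb P(E))\geqslant1$. Applying Theorem~\ref{comm-ass} to the triple $(U_1^Y,N,X)$ identifies the left-hand side below with $\mu_Z(X)$ (the only component of $U_1^Y\cap N$ through $Z$ being $U_1$, of multiplicity one), while applying it to $(U_1^Y,X,N)$ expands it over $\mathcal C(U_1^Y\cdot X)$; retaining only the term $W=Y$ and using positivity of intersection multiplicities yields
\[
\mu_Z(X)=\sum_{W\in\mathcal C(U_1^Y\cdot X)}i(Z;W\cdot N;\mathbb P(E))\,i(W;U_1^Y\cdot X;\mathbb P(E))\geqslant i(Z;Y\cdot N;\mathbb P(E))\,\mu_Y(X)\geqslant\mu_Y(X).
\]

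For the second assertion I would apply Proposition~\ref{cylindre} to $U=Y$, which gives $\mu_Q(X)=\mu_Y(X)$ for every $Q\in Y^{\mathrm{reg}}(k)$. Since $Y^{\mathrm{reg}}$ is a dense open subset of $Y$ and, over an algebraic closure, its rational points are dense, Proposition~\ref{mult of point under base change} propagates the equality $\mu_P(X)=\mu_Y(X)$ to a dense subset of $Y$; equivalently, the locus $\{\xi\in Y:\mu_\xi(X)>\mu_Y(X)\}$ is a proper closed subset of $Y$ by upper semicontinuity of multiplicity, and its complement is the required dense open set on which $\mu_P(X)=\mu_Y(X)$.

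The main obstacle is the construction of the auxiliary subscheme $N$ realizing simultaneously $U_1=U_1^Y\cap N$ with multiplicity one and $Z=Y\cap N$ properly near $P$, and the verification that $U_1^Y,N,X$ intersect properly at $Z$ so that Theorem~\ref{comm-ass} genuinely applies; this is where the regularity of $P$ in $Z$ and the use of a common direction $L$ are essential. Conceptually the inequality is just the statement that multiplicity does not increase under the localization $\mathcal O_{X,\eta_Z}\to\mathcal O_{X,\eta_Y}=(\mathcal O_{X,\eta_Z})_{\mathfrak p}$, and the cylinder machinery is precisely the device that converts this monotonicity into a tractable assertion in intersection theory; checking that the two common-direction cylinders interact as claimed is the only genuinely delicate point.
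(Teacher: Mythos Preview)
The paper does not supply its own proof of Corollary~\ref{sub}; it simply refers to \cite[Chap.~II, \S6, n$^\circ$~2,~c)]{Samuel}. Your proposal is an attempt to rebuild that argument from Proposition~\ref{cylindre} and Theorem~\ref{comm-ass}, which is the right toolkit, but there is a genuine gap.

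The gap is exactly the one you flag: the auxiliary subscheme $N$ is never produced. You only list the properties $N$ should have ($U_1$ a multiplicity-one proper component of $U_1^Y\cap N$, and $Z$ a proper component of $Y\cap N$), but you neither construct $N$ nor verify that $U_1^Y,N,X$ intersect properly at $Z$. Without this, Theorem~\ref{comm-ass}(ii) cannot be applied and the displayed chain of inequalities has no content. A second, related difficulty: your fixed point $P\in Z^{\mathrm{reg}}(k)$ need not lie in $Y^{\mathrm{reg}}$, so you cannot in general build $U_1$ and $U_1^Y$ ``at the same point in the same direction''. If instead you take a different regular point of $Y$ for $U_1^Y$, the inclusion $U_1\subseteq U_1^Y$ on which your whole comparison rests is lost.

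A shorter route, essentially the one you sketch in your last paragraph, avoids $N$ entirely: prove the second assertion first and deduce the first from it. After a finite Galois base change (Propositions~\ref{basechangemultinter} and~\ref{mult of point under base change}), Proposition~\ref{cylindre} applied with $U=Y$ gives $\mu_Q(X)=\mu_Y(X)$ for all $Q\in Y^{\mathrm{reg}}$, a dense open of $Y$; upper semicontinuity of the Hilbert--Samuel multiplicity then yields $\mu_Q(X)\geqslant\mu_Y(X)$ for every point $Q$ of $Y$. Now take $P\in Z^{\mathrm{reg}}(k)$: Proposition~\ref{cylindre} with $U=Z$ gives $\mu_P(X)=\mu_Z(X)$, while the previous sentence (applied at $P\in Z\subseteq Y$) gives $\mu_P(X)\geqslant\mu_Y(X)$. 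Hence $\mu_Z(X)\geqslant\mu_Y(X)$, and no auxiliary $N$ is needed.
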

On revoie les lecteurs \`a \cite[Chap. II \S6, n$^\circ$ 2, c)]{Samuel} pour une d\'emonstration du corollaire \ref{sub}.

On comparera la multiplicit\'e d'intersection d'une famille de sch\'emas et un produit de  multiplicit\'es de cette composante irr\'eductible dans cette famille de sch\'emas. Dans \cite[Chap. II \S6, n$^\circ$ 2, e)]{Samuel}, l'auteur de \cite{Samuel} a d\'emontr\'e la proposition \ref{inqmult}. Mais dans la d\'emonstration, l'auteur de \cite{Samuel} a implicitement utilis\'e la condition que cette composante irr\'eductible est g\'eom\'etriquement int\`egre sans la pr\'eciser dans l'\'enonc\'e. Ici on n'a pas besoin de supposer cette condition, et on peut d\'emontrer le cas o\`u le corps de base est parfait.

Pour cela, on introduira un lemme auxiliaire suivant.
\begin{lemm}\label{regular point induces geometrically integral}
  Soit $X$ un sous-sch\'ema ferm\'e int\`egre de $\mathbb P(E)$. Si l'ensemble $X^{\mathrm{reg}}(k)\neq\emptyset$, alors $X$ est g\'eom\'etriquement int\`egre.
\end{lemm}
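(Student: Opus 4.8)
Le plan consiste \`a \'etablir s\'epar\'ement que $X$ est g\'eom\'etriquement r\'eduit et g\'eom\'etriquement irr\'eductible, ces deux propri\'et\'es donnant ensemble que $X_{\overline k}$ est int\`egre. On commencera par la r\'eduction g\'eom\'etrique. Fixons un point $P\in X^{\mathrm{reg}}(k)$. Comme $P$ est $k$-rationnel, on a $\kappa(P)=k$, qui est trivialement s\'eparable sur $k$; joint \`a la r\'egularit\'e de $\O_{X,P}$, un crit\`ere standard de lissit\'e (voir par exemple \cite{LiuQing}) montre alors que $X$ est lisse en $P$ sur $\spec k$. La lissit\'e \'etant une condition ouverte, on disposera d'un voisinage ouvert $U$ de $P$ lisse sur $\spec k$; comme $X$ est int\`egre, $U$ est dense et $K(U)=K(X)$. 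Un sch\'ema lisse sur un corps \'etant g\'eom\'etriquement r\'eduit, l'extension de corps $K(X)/k$ sera s\'eparable, et donc $X$ sera g\'eom\'etriquement r\'eduit.

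Pour l'irr\'eductibilit\'e g\'eom\'etrique, on se ram\`enera \`a une extension galoisienne finie. Le nombre de composantes g\'eom\'etriques \'etant fini et leur apparition se produisant sur une extension s\'eparable, on choisira une extension galoisienne finie $k'/k$ telle que toute composante irr\'eductible de $X_{k'}$ soit g\'eom\'etriquement irr\'eductible; d'apr\`es le lemme \ref{irreducible component change after extension of fields} (appliqu\'e \`a l'extension $\overline k/k'$), il suffira alors de montrer que $X_{k'}$ est irr\'eductible. On utilisera trois faits. D'une part, le groupe $\gal(k'/k)$ agit transitivement sur $\mathcal C(X_{k'})$ d'apr\`es \cite[Proposition A.14]{Mustata-notes}, puisque $X$ est int\`egre. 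D'autre part, la fibre de la projection $X_{k'}\rightarrow X$ au-dessus de $P$ est $\spec(\kappa(P)\otimes_k k')=\spec k'$, r\'eduite \`a un unique point $P'$, qui est donc fixe sous $\gal(k'/k)$. Enfin, $X$ \'etant lisse en $P$, le sch\'ema $X_{k'}$ est r\'egulier en $P'$, de sorte que l'anneau local $\O_{X_{k'},P'}$ est int\`egre et que $P'$ n'appartient qu'\`a une seule composante $Z\in\mathcal C(X_{k'})$.

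Ces trois faits se combineront directement: pour tout $\sigma\in\gal(k'/k)$, la composante $\sigma(Z)$ contient $\sigma(P')=P'$, d'o\`u $\sigma(Z)=Z$ par unicit\'e; la composante $Z$ sera ainsi invariante, et la transitivit\'e de l'action forcera $\mathcal C(X_{k'})=\{Z\}$, c'est-\`a-dire $X_{k'}$ irr\'eductible. Par le choix de $k'$, $X_{\overline k}$ sera alors irr\'eductible, et joint \`a la r\'eduction g\'eom\'etrique d\'ej\`a obtenue, $X_{\overline k}$ sera int\`egre. La difficult\'e principale sera le passage de la r\'egularit\'e de $\O_{X,P}$ \`a la lissit\'e de $X$ en $P$, puis sa stabilit\'e par le changement de base \`a $k'$ au point rationnel $P'$: c'est l\`a qu'intervient de mani\`ere essentielle l'hypoth\`ese $\kappa(P)=k$ (sans point rationnel lisse, un sch\'ema int\`egre peut \^etre g\'eom\'etriquement r\'eduit et connexe sans \^etre g\'eom\'etriquement irr\'eductible), et c'est la combinaison de l'invariance de $Z$ avec la transitivit\'e galoisienne qui constituera le c{\oe}ur de la preuve.
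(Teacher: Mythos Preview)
Your proof is correct and follows the same overall decomposition as the paper (geometric reducedness plus geometric irreducibility), but the two arguments diverge in their details.

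For geometric reducedness, you go through smoothness: the rational regular point $P$ has $\kappa(P)=k$, hence $X$ is smooth at $P$, the smooth locus is a dense open, and therefore $K(X)/k$ is separable. The paper instead treats separable extensions directly via \cite[Corollary 3.2.14]{LiuQing} and then handles purely inseparable extensions of degree $p$ by an explicit completion argument, embedding $A\otimes_k k'$ into $k'[[T_1,\ldots,T_d]]$. Your route is shorter and more conceptual; the paper's is more hands-on and avoids invoking the equivalence ``regular with separable residue field $\Leftrightarrow$ smooth'', at the cost of a somewhat technical diagram.

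For geometric irreducibility, both arguments rest on the same three facts: the Galois group acts transitively on $\mathcal C(X_{k'})$, the base-changed point $P'$ is Galois-fixed, and $P'$ lies on a single component. You deduce the last point from regularity of $\O_{X_{k'},P'}$ (a regular local ring is a domain); the paper deduces it from the multiplicity identity $\mu_{P'}(X_{k'})=1$ combined with Proposition~\ref{sumofmult}, which forces $\#\mathcal C(X_{k'})=1$ and $\ell(\O_{X_{k'},X'})=1$ simultaneously. These are essentially the same observation in different language. Your reduction to a finite Galois $k'$ over which the components become geometrically irreducible is a mild extra step compared to the paper (which simply works over arbitrary Galois extensions), but it is harmless since you have already established geometric reducedness, ensuring that all splitting occurs over separable extensions.
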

\begin{proof}
Il faut montrer que $X$ est g\'eom\'etriquement r\'eduit et g\'eom\'etriquement irr\'eductible.

 D'abord, on va d\'emontrer que $X$ est g\'eom\'etriement irr\'eductible. Soit $\xi\in X^{\mathrm{reg}}(k)$. Pour toute extension de corps $k'/k$, soit $\xi'=\xi\times_{\spec k}\spec k'$. Alors d'apr\`es le crit\`ere jacobien (cf. \cite[Theorem 4.2.19]{LiuQing}), on a
 \[\mu_{\xi'}(X_{k'})=\mu_\xi(X)=1,\]
  comme le rang de la matrice jacobien en un point rationnel est invariant sous l'extension de corps. De plus, si l'extension $k'/k$ est galoisienne, le point $\xi'$ est $\gal(k'/k)$-invariant. Donc pour toute composante irr\'eductible $X'\in\mathcal C(X_{k'})$, on a $\xi'\in X'$.

  D'apr\`es la proposition \ref{sumofmult}, pour toute extension galoisienne $k'/k$, on a l'\'egalit\'e
  \[\sum_{X'\in\mathcal C(X_{k'})}\ell_{\O_{X_{k'},X'}}(\O_{X_{k'},X'})\mu_{\xi'}(X')=\mu_{\xi'}(X_{k'})=1.\]
  Donc on obtient $\#\mathcal C(X_{k'})=1$ et $\ell_{\O_{X_{k'},X'}}(\O_{X_{k'},X'})=1$ pour le $X'\in\mathcal C(X_{k'})$. L'assertion $\#\mathcal C(X_{k'})=1$ signifie que $X_{k'}$ est irr\'eductible. Donc $X$ est g\'eom\'etriquement irr\'eductible.

Dans la suite, on va d\'emontrer que $X$ est g\'eom\'etriement r\'eduit. Si l'extension $k'/k$ est s\'eparable, alors d'apr\`es \cite[Corollary 3.2.14]{LiuQing}, le sch\'ema $X$ est g\'eom\'etriquement r\'eduit.

Si $k'/k$ n'est pas s\'eparable, alors le corps $k$ n'est pas parfait. On suppose que la caract\'eristique de $k$ est $p$. Dans ce cas-l\`a, on peut diviser l'extension \`a une composition d'une extension s\'eparable et une extension purement ins\'eparable. Pour la partie purement ins\'eparable, on peut la diviser \`a une composition des extensions purement ins\'eparable de degr\'e $p$. Alors on a besoin de montre que si $k'/k$ est une extension purement ins\'eparable avec $[k':k]=p$, le sch\'ema $X_{k'}$ est r\'eduit. Comme la question est locale, alors on peut supposer que $X$ est affine. Soit $X=\spec A$, o\`u $A$ est un anneau contenant $k$.

Comme $X$ admet un point $k$-rationnel r\'egulier, alors on prend $\xi\in X^{\mathrm{reg}}(k)$, et on d\'esigne par $\sm_\xi$ l'id\'eal maximal de l'anneau $\O_{X,\xi}$. Alors on a $\widehat{A}_{\sm_\xi}=\widehat{\O}_{X,\xi}\cong k[[T_1,\ldots,T_d]]$ (cf. \cite[(28.J)]{Matsumura1}), o\`u $d=\dim(X)$. Soit $\xi'=\xi\times_{\spec k}\spec k'$, alors on a $\widehat{\O}_{X_{k'},\xi'}\cong k'[[T_1,\ldots,T_d]]$ car $\xi'$ est r\'egulier dans $X_{k'}$. Donc on a le diagramme commutatif suivant:
\[\xymatrix{\relax A\ar@{^{(}->}[d]\ar@{^{(}->}[r]&k[[T_1,\ldots,T_d]]\ar@{^{(}->}[d]\\
A\otimes_kk'\ar@{^{(}->}[r]&k'[[T_1,\ldots,T_d]].}\]

L'anneau $k'[[T_1,\ldots,T_d]]$ est int\`egre, alors l'anneau $A\otimes_kk'$ est int\`egre aussi, qui doit \^etre r\'eduit. Donc on obtient que $X$ est g\'eom\'etriquement r\'eduit. D'o\`u on a le r\'esultat.
\end{proof}
\begin{rema}
  La d\'emonstration du lemme \ref{regular point induces geometrically integral} est similaire \`a celle de \cite[Lemma 10.1]{Poonen_2009_CT}, mais la condition dans le lemme \ref{regular point induces geometrically integral} est plus faible.
\end{rema}
\begin{prop}\label{inqmult}
   On suppose que $k$ est un corps parfait. Soient $X_1,\ldots,X_r$ des sous-sch\'emas ferm\'es de dimension pure de $\mathbb P(E)$ et $M\in\mathcal C(X_1\intersect X_r)$. Alors on a
    \begin{equation*}
    i(M;X_1\intersect X_r;\mathbb P(E))\geqslant\prod\limits_{i=1}^r\mu_M(X_i).
  \end{equation*}
\end{prop}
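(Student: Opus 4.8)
The plan is to reduce, in two steps, to the case where $M$ is a single rational point, and then to read off the inequality directly from the description of the intersection multiplicity as the multiplicity of the diagonal ideal.

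\emph{Reduction to a rational point of $M$.} Since $k$ is perfect, every finite extension is separable, so I may work with finite Galois extensions $k'/k$. For such an extension, Lemma \ref{irreducible component change after extension of fields} matches the components of $X_{1,k'}\intersect X_{r,k'}$ with those of $(X_1\intersect X_r)_{k'}$, while Propositions \ref{basechangemultinter} and \ref{mult of point under base change} give, for every component $M'\in\mathcal C(M_{k'})$, the equalities $i(M';X_{1,k'}\intersect X_{r,k'};\mathbb P(E_{k'}))=i(M;X_1\intersect X_r;\mathbb P(E))$ and $\mu_{M'}(X_{i,k'})=\mu_M(X_i)$. Hence the inequality over $k$ is equivalent to the same inequality for $(X_{i,k'},M')$ over $k'$, and I am free to enlarge $k$. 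Because $k$ is perfect, $M$ is geometrically reduced and its smooth locus is dense; I would therefore pick a closed point lying simultaneously in the smooth locus, in the dense open where $\mu_{(\ndot)}(X_i)$ takes its generic value $\mu_M(X_i)$ (Corollary \ref{sub}), and off the other components of $X_1\intersect X_r$, and take $k'$ to be the Galois closure of its residue field. Renaming $k'$ to $k$ and $M'$ to $M$, I may then assume that $M$ carries a smooth rational point $P$ with all these genericity properties; by Lemma \ref{regular point induces geometrically integral}, $M$ is geometrically integral.

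\emph{Cutting $M$ down to $P$.} Put $d=\dim M$ and choose a $k$-linear subscheme $L$ of dimension $n-d$ through $P$, transverse to $T_PM$ (such $L$ can be written down explicitly once coordinates are adapted to $T_PM$). Then $P$ is a reduced isolated point of $M\cap L$, so $i(P;M\cdot L;\mathbb P(E))=1$, and $\mu_P(L)=1$ since $L$ is regular. The proper-intersection identity $\sum_i\dim X_i=(r-1)n+d$ gives $\sum_i\dim X_i+\dim L=rn$, so $X_1,\dots,X_r,L$ meet properly at $P$. Applying associativity (Theorem \ref{comm-ass}, iterated) to $X_1\intersect X_r$ and $L$ yields
\[ i(P;X_1\intersect X_r\cdot L;\mathbb P(E))=\sum_{N\in\mathcal C(X_1\intersect X_r)}i(N;X_1\intersect X_r;\mathbb P(E))\,i(P;N\cdot L;\mathbb P(E)). \]
As $P$ is a general point of $M$, it lies on no other component, so $i(P;N\cdot L;\mathbb P(E))=0$ for $N\neq M$ and the sum collapses to $i(M;X_1\intersect X_r;\mathbb P(E))$. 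It thus suffices to prove the inequality at the rational point $P$ for the family $X_1,\dots,X_r,L$, i.e. the case $\dim M=0$.

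\emph{The point case.} With $M=P$ rational, I return to the definition of the intersection multiplicity as the multiplicity of the diagonal ideal $\mathcal I$ in $B:=\mathcal O_{X_1\times_k\cdots\times_k X_r\times_k L,\,\Delta(P)}$. Writing $A_j$ for the local ring of the $j$-th factor at $P$, with maximal ideal $\mathfrak m_j$ and residue field $k$ (here $P$ is rational), the completion of $B$ is the completed tensor product $\widehat A_1\widehat\otimes_k\cdots\widehat\otimes_k\widehat A_{r+1}$, whose maximal ideal $\mathfrak n$ is generated by the $\mathfrak m_j$. Because all residue fields equal $k$, the associated graded ring splits as $\operatorname{gr}_{\mathfrak n}(B)\cong\bigotimes_{j}\operatorname{gr}_{\mathfrak m_j}(A_j)$ over $k$, so the Hilbert series multiply and $e_{\mathfrak n,B}=\prod_j e_{\mathfrak m_j,A_j}=\mu_P(L)\prod_{i=1}^{r}\mu_P(X_i)$. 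On the other hand $\mathcal I$ is generated by differences of local coordinates pulled back from distinct factors, hence $\mathcal I\subseteq\mathfrak n$; both are $\mathfrak n$-primary since the intersection is $0$-dimensional at $P$. The monotonicity of multiplicities \eqref{smaller ideal with bigger multiplicity} then gives
\[ i(P;X_1\intersect X_r\cdot L;\mathbb P(E))=e_{\mathcal I,B}\geqslant e_{\mathfrak n,B}=\mu_P(L)\prod_{i=1}^{r}\mu_P(X_i)=\prod_{i=1}^{r}\mu_M(X_i), \]
which is the asserted inequality.

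I expect the main obstacles to be two. First, realizing all the genericity conditions by a single closed point when $k$ is finite forces the field enlargement above, and it is precisely the perfectness of $k$ (hence geometric reducedness of $M$) that makes the requisite smooth point available. Second, and more essential, the multiplicativity $e_{\mathfrak n,B}=\prod_j e_{\mathfrak m_j,A_j}$ is the step that fails in general: it rests on every factor having residue field $k$ at $P$, which is what produces the tensor decomposition of $\operatorname{gr}_{\mathfrak n}(B)$. This is exactly where \cite{Samuel} implicitly assumed geometric integrality, and the first two reductions are designed to secure a rational point so that the tensor decomposition is legitimate.
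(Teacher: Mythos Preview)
Your argument is correct and reaches the same destination as the paper, but the route through the middle is different. Both proofs begin with the same Galois base change (Propositions \ref{basechangemultinter} and \ref{mult of point under base change}) to manufacture a regular rational point on $M$, and both finish with the same two ingredients: the monotonicity $e_{\mathcal I}\geqslant e_{\mathfrak m}$ of \eqref{smaller ideal with bigger multiplicity} and the multiplicativity of Samuel multiplicities under tensor products of local $k$-algebras with residue field $k$ (the paper cites \cite[Chap.~VI, n$^\circ$~1, d, prop.~1]{SamuelLocAlg}, which is exactly your graded tensor decomposition). Where you diverge is in how you exploit the rational point. The paper stays at the generic point of $M$: it observes $\Delta(M)\subseteq M^{\times_k r}$ inside $X_1\times_k\cdots\times_kX_r$, and applies Corollary \ref{sub} (itself resting on the cylinder construction of Proposition \ref{cylindre}) to get $\mu_{\Delta(M)}(X_1\times_k\cdots\times_kX_r)\geqslant\mu_{M^{\times_k r}}(X_1\times_k\cdots\times_kX_r)$, the right-hand side then factoring as $\prod_i\mu_M(X_i)$ since $M$ is geometrically integral. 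You instead slice $M$ down to $P$ by a transverse linear $L$ and use associativity (Theorem \ref{comm-ass}) to trade $i(M;X_1\intersect X_r;\mathbb P(E))$ for $i(P;X_1\intersect X_r\cdot L;\mathbb P(E))$, then run the tensor argument at the closed point. Your route avoids the cylinder machinery and the comparison $\mu_{\Delta(M)}\geqslant\mu_{M^{\times r}}$, replacing them by a one-line transversality check $i(P;M\cdot L;\mathbb P(E))=1$; the paper's route avoids introducing the auxiliary factor $L$ and the appeal to associativity. Both are clean; yours is arguably more self-contained here, while the paper's is more in keeping with the cylinder apparatus it develops for the later sections.
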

\begin{proof}
D'abord, on suppose $M^{\mathrm{reg}}(k)\neq\emptyset$. Dans ce cas-l\`a, d'apr\`es le lemme \ref{regular point induces geometrically integral}, le sch\'emas $M$ est g\'eom\'etriquement int\`egre. D'o\`u l'on obtient que le sch\'ema $M^{\times_kr}$ est g\'eom\'etriquement int\`egre aussi par \cite[(4.6.5) (ii)]{EGAIV_2}.

   La multiplicit\'e d'intersection $i(M;X_1\intersect X_r;\mathbb P(E))$ est la multiplicit\'e d'un id\'eal de l'anneau local $\O_{X_1\times_k\cdots\times_k X_r,\Delta(M)}$ qui est contenu dans l'id\'eal maximal de $\O_{X_1\times_k\cdots\times_k X_r,\Delta(M)}$. D'apr\`es l'in\'egalit\'e \eqref{smaller ideal with bigger multiplicity}, on obtient
   \[i(M;X_1\intersect X_r;\mathbb P(E))\geqslant\mu_{\Delta(M)}(X_1\times_k\cdots\times_k X_r).\]
    De plus, le sch\'ema $\Delta(M)$ est g\'eom\'etriquement int\`egre et il admet un point r\'egulier $k$-rationnel. D'apr\`es le fait que $\Delta(M)\subseteq M^{\times_kr}$, on obtient
  \begin{equation*}
    \mu_{\Delta(M)}(X_1\times_k\cdots\times_k X_r)\geqslant\mu_{M^{\times_kr}}(X_1\times_k\cdots\times_k X_r)
  \end{equation*}
  compte tenu du corollaire \ref{sub}.

  Soient $U_1$ et $U_2$ deux sous-sch\'emas ferm\'es g\'eom\'etriquement int\`egres de $Y_1$ et $Y_2$ respectivement, o\`u $Y_1$ et $Y_2$ sont deux sous-sch\'emas ferm\'es de $\mathbb P(E)$. D'apr\`es \cite[(4.6.5) (ii)]{EGAIV_2}, le sch\'ema $U_1\times_k U_2$ est g\'eom\'etriquement int\`egre. Dans ce cas-l\`a, le sch\'ema $U_1\times_k U_2$ est un sous-sch\'ema ferm\'e int\`egre de $Y_1\times_k Y_2$, d'o\`u $\O_{Y_1\times_k Y_2,U_1\times_k U_2}\cong\O_{Y_1,U_1}\otimes_k\O_{Y_2,U_2}$. D'apr\`es \cite[Chap. VI, n$^\circ$ 1, d, prop. 1]{SamuelLocAlg}, on en d\'eduit
\begin{equation*}
  \mu_{U_1\times_kU_2}(Y_1\times_k Y_2)=\mu_{U_1}(Y_1)\mu_{U_2}(Y_2).
\end{equation*}
 Alors on a
\begin{eqnarray*}
  \mu_{M^{\times_kr}}(X_1\times_k\cdots\times_k X_r)&=&\mu_M(X_1)\cdot\mu_{M^{\times_k(r-1)}}(X_2\times_k\cdots\times_k X_r)\\
  &=&\cdots\\
  &=&\prod\limits_{i=1}^r\mu_M(X_i),
\end{eqnarray*}
qui d\'emontre l'assertion.

  Dans la suite, on va d\'emontrer le cas o\`u $k$ est un corps parfait et $M\in\mathcal C(X_1\intersect X_r)$. Soit $k'/k$ une extension galoisienne finie de corps telle que pour toute composante irr\'eductible $M'\in\mathcal C(M_{k'})$, $M'$ contienne au moins un $k'$-point r\'egulier. D'apr\`es le lemme \ref{regular point induces geometrically integral}, toute $M'\in\mathcal C(M_{k'})$ est g\'eom\'etriquement int\`egre. D'apr\`es l'argument ci-dessus, si on fixe une $M'\in\mathcal C(M_{k'})\subseteq\mathcal C(X_{1,k'}\intersect X_{r,k'})$ (par le lemme \ref{irreducible component change after extension of fields}), on a
      \begin{equation*}
    i(M';X_{1,k'}\intersect X_{r,k'};\mathbb P(E_{k'}))\geqslant\prod\limits_{i=1}^r\mu_{M'}(X_{i,k'}).
  \end{equation*}
  D'apr\`es la proposition \ref{basechangemultinter}, on a
  \[i(M;X_1\intersect X_r;\mathbb P(E))=i(M';X_{1,k'}\intersect X_{r,k'};\mathbb P(E_{k'})).\]
  Par la proposition \ref{mult of point under base change}, on a
  \[\mu_M(X_i)=\mu_{M'}(X_{i,k'}).\]
  Alors on a l'assertion.
\end{proof}

\subsubsection*{Comptage des objets sur un corps fini}
Soient $k$ un corps et $V$ un espace $k$-vectoriel de rang fini. On d\'esigne par $\Gr(r,V^\vee)$ la grassmannienne qui classifie les sous-espaces vectoriels de dimension $r$ de $V$. Soit $k'/k$ une extension de corps, on d\'esigne par $\Gr(r,V^\vee)(k')$ l'ensemble des $k$-points \`a valeurs dans le corps $k'$ de $\Gr(r,V^\vee)$. On d\'esigne par $\Gr_k(r,n)$ la grassmannienne $\Gr(r,(k^n)^\vee)$, ou par $\Gr(r,n)$ s'il n'y a pas d'ambigu\"it\'e sur le corps de base $k$. En particulier, on a $\Gr_k(n-1,n)\cong\mathbb P_k^{n-1}$.

\begin{lemm}\label{grassmanne}
  Avec les notations ci-dessus, soit $\f_q$ le corps fini de cardinal $q$. Alors on a
  \begin{equation*}
    \#\Gr_{\f_q}(r,n)(\f_q)=\frac{\prod\limits_{t=1}^n(q^{t-1}+q^{t-2}+\cdots+1)}{\prod\limits_{t=1}^r(q^{t-1}+q^{t-2}+\cdots+1)\cdot\prod\limits_{t=1}^{n-r}(q^{t-1}+q^{t-2}+\cdots+1)}.
  \end{equation*}
  En particulier, on a
\[\mathbb P^n_{\f_q}(\f_q)=q^n+\cdots+1.\]
\end{lemm}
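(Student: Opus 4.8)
The plan is to interpret $\Gr_{\f_q}(r,n)(\f_q)$ as the set of $r$-dimensional $\f_q$-vector subspaces of $\f_q^n$ and to count these subspaces by the classical device of enumerating ordered bases in two ways. First I would count the ordered families $(v_1,\dots,v_r)$ of linearly independent vectors in $\f_q^n$: the vector $v_1$ can be any nonzero vector, giving $q^n-1$ choices, and once $v_1,\dots,v_{i-1}$ are fixed, $v_i$ must avoid their linear span, which has $q^{i-1}$ elements, so there remain $q^n-q^{i-1}$ choices. Hence the number of such families is $\prod_{i=0}^{r-1}(q^n-q^i)$.

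Next I would count, for a fixed $r$-dimensional subspace $W$, the number of its ordered bases; the same argument applied inside $W\cong\f_q^r$ gives $\prod_{i=0}^{r-1}(q^r-q^i)$, which is also the cardinality of the general linear group $\mathrm{GL}_r(\f_q)$ acting simply transitively on the ordered bases of $W$. Since every linearly independent $r$-family spans a unique $r$-dimensional subspace, and each such subspace arises from exactly $\prod_{i=0}^{r-1}(q^r-q^i)$ families, dividing yields
\[
\#\Gr_{\f_q}(r,n)(\f_q)=\frac{\prod_{i=0}^{r-1}(q^n-q^i)}{\prod_{i=0}^{r-1}(q^r-q^i)}.
\]

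It then remains to bring this quotient into the stated shape. Factoring $q^i$ out of each factor in numerator and denominator produces a common power $q^{\binom{r}{2}}$ that cancels, leaving $\prod_{j=n-r+1}^{n}(q^j-1)\big/\prod_{j=1}^{r}(q^j-1)$; multiplying numerator and denominator by $\prod_{j=1}^{n-r}(q^j-1)$ turns this into $\prod_{j=1}^{n}(q^j-1)\big/\bigl(\prod_{j=1}^{r}(q^j-1)\prod_{j=1}^{n-r}(q^j-1)\bigr)$. Finally, using the identity $q^{t-1}+\cdots+1=(q^t-1)/(q-1)$ for each factor and observing that the factors $(q-1)$ cancel (there are $n$ of them in the numerator and $r+(n-r)=n$ in the denominator), I recover exactly the displayed formula. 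For the special case I would apply the isomorphism $\Gr_{\f_q}(n,n+1)\cong\mathbb{P}^n_{\f_q}$ (the convention of the statement with $n$ replaced by $n+1$), i.e. take $r=n$ in ambient dimension $n+1$: the single denominator factor $\prod_{t=1}^{1}(\cdots)$ equals $1$, and the remaining quotient telescopes to the factor of index $t=n+1$, namely $q^n+\cdots+1$.

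I expect no genuine obstacle here: the only subtlety is the bookkeeping in the algebraic simplification, and the one point that deserves an explicit word is that the two counts of ordered bases are related by the free and transitive action of $\mathrm{GL}_r(\f_q)$, so that the quotient above is literally a ratio of cardinalities with no hidden stabilizers.
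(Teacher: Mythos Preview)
Your argument is correct and is precisely the classical double-counting of ordered independent families that one finds in Stanley's book, which is what the paper cites in lieu of a proof. There is nothing to compare: the paper simply refers to \cite[Proposition 1.7.2]{Stanley} without writing out the argument, and your proposal supplies exactly that standard proof.
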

On revoie les lecteurs \`a \cite[Proposition 1.7.2]{Stanley} pour une d\'emonstration du lemme \ref{grassmanne}.

Soient $k'/k$ une extension de corps, $E$ un espace $k$-vectoriel de rang fini, et $\phi:X\hookrightarrow\mathbb P(E_{k'})$ une immersion ferm\'ee. On a le diagramme commutatif suivant:
\[\xymatrix{X\ar@{^{(}->}^{\phi}[r]&\mathbb P(E_{k'})\ar[r]^\pi\ar[d]\ar@{}|-{\square}[dr]&\mathbb P(E)\ar[d]\\
&\spec k'\ar[r]&\spec k.}\]
\begin{defi}\label{descent of k-points}
On d\'esigne par $X_{\phi}(k)$ le sous-ensemble de $X(k')$ des $\xi\in X(k')$ (consid\'er\'es comme des $k'$-morphismes de $\spec k'$ dans $X$) dont la composition avec le morphisme canonique $X\rightarrow\mathbb P(E)$ donne un $k$-point de $\mathbb P(E)$ \`a valeurs dans $k'$ qui provient d'un point $k$-rationnel de $\mathbb P(E)$. Autrement dit, on d\'efinit $X_\phi(k)=X(k')\cap\pi^{-1}\left(\mathbb P(E)(k)\right)$. S'il n'y a pas d'ambigu\"it\'e sur l'immersion $\phi$, on d\'esigne par $X(k)$ l'ensemble $X_{\phi}(k)$ pour simplifier.
\end{defi}
Lorsque $k$ est un corps fini , on a un r\'esultat comme ci-dessous pour estimer le cardinal de l'ensemble $X_{\phi}(k)$ lorsque $X$ est de dimension pure.
\begin{prop}\label{lineaire}
  Soient $k/\f_q$ une extension de corps, $E$ un espace $k$-vectoriel de rang fini, et $\phi:X\hookrightarrow\mathbb P(E)$  une immersion ferm\'ee. On suppsose que $X$ est de dimension pure $d$. Alors
  \[\#X_{\phi}(\f_q)\leqslant\deg(X)\#\mathbb P^d_{\f_q}(\f_q).\]
\end{prop}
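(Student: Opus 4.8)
The plan is to prove the inequality by induction on $d=\dim(X)$, the inductive hypothesis being the full statement of Proposition \ref{lineaire} for every pure-dimensional closed subscheme of every projective space over $k$ carrying its $\f_q$-structure. Here I use that, through $\phi$ and the projection $\pi$, the set $X_\phi(\f_q)$ of Definition \ref{descent of k-points} is identified with a subset of the finite set of $\f_q$-rational points of the ambient space. First I would reduce to $X$ integral: if $X$ has components $\mathcal C(X)$, then $\#X_\phi(\f_q)\le\sum_{X'\in\mathcal C(X)}\#X'_\phi(\f_q)$, whereas Proposition \ref{sumofmult} gives $\deg(X)=\sum_{X'\in\mathcal C(X)}\ell_{\O_{X,X'}}(\O_{X,X'})\deg(X')\ge\sum_{X'\in\mathcal C(X)}\deg(X')$, so the bound for each integral $X'$ implies it for $X$. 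The base case $d=0$ is then clear: an integral $0$-dimensional scheme has at most one $\f_q$-rational point, and $\deg(X)\ge1=\#\mathbb P^0_{\f_q}(\f_q)$.

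For the inductive step I would take $X$ integral of dimension $d\ge1$ with $\delta=\deg(X)$, and first replace the ambient space by the smallest $\f_q$-rational linear subspace containing $X$, that is, the intersection of all $\f_q$-rational hyperplanes containing $X$; this changes neither $\dim(X)$ nor $\deg(X)$, and afterwards we may assume $\mathbb P(E)=\mathbb P^n_{\f_q}$ with $d\le n$ and with no $\f_q$-rational hyperplane containing $X$. The heart of the argument is a double count over the $\f_q$-rational hyperplanes $H$ of $\mathbb P^n_{\f_q}$. By Lemma \ref{grassmanne} there are $q^n+\cdots+1$ of them, and exactly $q^{n-1}+\cdots+1$ pass through a fixed $\f_q$-rational point; counting incidences with the points of $X_\phi(\f_q)$ gives
\[\#X_\phi(\f_q)\,(q^{n-1}+\cdots+1)=\sum_{H}\#(X\cap H_k)_\phi(\f_q),\]
where $H_k$ denotes the base change of $H$ to $k$.

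Since no $\f_q$-rational hyperplane contains $X$ and $X$ is integral, each $X\cap H_k$ is a proper intersection, hence pure of dimension $d-1$, and the B\'ezout theorem (Theorem \ref{bezout}) yields $\sum_{Z}i(Z;X\cdot H_k;\mathbb P(E))\deg(Z)=\delta$, so $\sum_Z\deg(Z)\le\delta$. Applying the induction hypothesis to the components of $X\cap H_k$ then bounds $\#(X\cap H_k)_\phi(\f_q)\le\delta\,(q^{d-1}+\cdots+1)$ for every $H$, and substituting into the identity above gives
\[\#X_\phi(\f_q)\le\delta\,(q^{d-1}+\cdots+1)\,\frac{q^n+\cdots+1}{q^{n-1}+\cdots+1}.\]
Writing each factor as $\frac{q^{j+1}-1}{q-1}$, an elementary manipulation shows the right-hand side is $\le\delta\,(q^d+\cdots+1)$ exactly when $d\le n$, which holds; this closes the induction.

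I expect the main difficulties to be ones of bookkeeping rather than of substance. One must check that the descent of Definition \ref{descent of k-points} is compatible with the incidence count, namely that $\xi\in X_\phi(\f_q)$ lies on $H_k$ precisely when its image lies on the $\f_q$-rational hyperplane $H$, so that the counting genuinely takes place among $\f_q$-points of the base; and one must handle the degenerate case carefully, securing an $\f_q$-rational (not merely $k$-rational) linear span so that both the hypothesis ``no rational hyperplane contains $X$'' and the final numerical inequality $d\le n$ are available. The rest is the arithmetic of the quantities $\tfrac{q^{j+1}-1}{q-1}$ and the componentwise application of B\'ezout and the induction hypothesis.
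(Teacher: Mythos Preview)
Your argument is correct and is precisely the classical hyperplane-section induction that the paper has in mind: the paper does not supply its own proof but simply cites ``l'argument dans la page 236 de \cite{Mazur_1975_ag}'', and that argument is exactly the double count over $\f_q$-rational hyperplanes combined with B\'ezout and induction on $\dim X$, just as you wrote it. The reduction to the $\f_q$-rational linear span, the incidence identity $\#X_\phi(\f_q)\,(q^{n-1}+\cdots+1)=\sum_H\#(X\cap H_k)_\phi(\f_q)$, and the elementary check that $(q^{n+1}-1)(q^d-1)\le(q^n-1)(q^{d+1}-1)$ for $d\le n$ are all sound; the bookkeeping points you flag (compatibility of Definition~\ref{descent of k-points} with the incidence count, and securing an $\f_q$-rational span so that $d\le n$ holds after reduction) are the only places requiring care, and you have identified them correctly.
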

On revoie les lecteurs \`a l'argument dans la page 236 de \cite{Mazur_1975_ag}. La proposition \ref{lineaire} est une cons\'equence directe de cet argument.

Soient $k$ un corps, et $X_1,\ldots,X_r$ des $k$-sch\'emas tels que $\bigcap_{i=1}^rX_i(k)\neq\emptyset$. Si $P\in \bigcap_{i=1}^rX_i(k)$, et toute composante irr\'eductible de l'intersection de $X_1\cap\cdots\cap X_r$ passant par $P$ se r\'eduit \`a $\{P\}$, on dit que \textit{$X_1,\ldots,X_r$ s'intersectent seulement au voisinage de $P$}.

La proposition suivante est utilis\'ee pour d\'eterminer s'il existe un sous-sch\'ema $k$-lin\'eaire ferm\'e de $\mathbb P(E)$ qui intersecte une suite des sch\'emas de dimension pure fix\'es en un point $k$-rationnel seulement au voisinage de ce point.

\begin{prop}\label{numofproper}
  Soient $U_1,\ldots,U_r$ des sous-sch\'emas ferm\'es de dimension pure de $\mathbb P(E)$. On suppose que $\bigcap_{i=1}^rU_i(k)\neq\emptyset$ et $\dim(U_i)=d<n=\rg_k(E)-1$ pour tout $i=1,\ldots,r$. Soit $P\in \bigcap_{i=1}^rU_i(k)$. Si l'in\'egalit\'e
  \[\#k\geqslant\deg (U_1)+\cdots+\deg (U_r)\]
  est v\'erifi\'ee, alors il existe au moins un sous-sch\'ema ferm\'e $k$-lin\'eaire de $\mathbb P(E)$ de dimension plus petite ou \'egale \`a $n-d$ qui intersecte tout $U_i$ seulement au voisinage de $P$.
\end{prop}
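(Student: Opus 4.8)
The plan is to construct the desired linear subscheme $L$ as an intersection $H_1\cap\cdots\cap H_d$ of $d$ hyperplanes passing through $P$, chosen one at a time so that the partial intersections with the $U_i$ remain proper. One may assume $d\geqslant1$, the case $d=0$ being settled at once by $L=\mathbb{P}(E)$. I would parametrize the $k$-hyperplanes through $P$ by the projective space $\mathbb{P}(H_P^\vee)$ of dimension $n-1$ (with $H_P=\ker(E\rightarrow\O_{\mathbb{P}(E)}(1)|_P)$ as in la d\'efinition \ref{def of cylindre}); over $k=\f_q$ it has $\#\mathbb{P}^{n-1}_{\f_q}(\f_q)=q^{n-1}+\cdots+1$ rational points by le lemme \ref{grassmanne}, and when $k$ is infinite the avoidance step below is immediate.

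The induction hypothesis I would carry is that $L_j:=H_1\cap\cdots\cap H_j$ is $k$-linear of dimension $n-j$, passes through $P$, and that each $U_i\cap L_j$ is a proper intersection, hence of pure dimension $d-j$. To pick $H_{j+1}$ I would forbid it from containing any irreducible component of any $U_i\cap L_j$. The decisive count is the following: for a fixed integral closed subscheme $W$ with $\dim(W)\geqslant1$, the linear forms vanishing on $W$ span a subspace of $E^\vee$ of codimension $\geqslant2$ (it is the space of forms vanishing on the linear span of $W$, of projective dimension $\geqslant1$), so the $\f_q$-hyperplanes through $P$ containing $W$ form a linear subvariety of $\mathbb{P}(H_P^\vee)$ of dimension $\leqslant n-2$, whence at most $q^{n-2}+\cdots+1$ of them. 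Using le th\'eor\`eme \ref{bezout} for the proper intersection $U_i\cdot H_1\cdots H_j$ together with la proposition \ref{sumofmult}, the number of components to be avoided is $\sum_{i=1}^r\#\mathcal{C}(U_i\cap L_j)\leqslant\sum_{i=1}^r\deg(U_i)\leqslant\#k=q$. Consequently the number of forbidden hyperplanes is at most $q(q^{n-2}+\cdots+1)=q^{n-1}+\cdots+q$, strictly smaller than $q^{n-1}+\cdots+1$, so an admissible $H_{j+1}$ exists.

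It remains to check that $L_{j+1}$ stays linear of the expected dimension and that the process terminates correctly. For $j\leqslant d-1$ the proper intersection $U_i\cap L_j$ is nonempty (it contains $P$) and pure of dimension $d-j\geqslant1$, hence has a component $W$ through $P$; since $H_{j+1}\not\supseteq W$ and $W\subseteq L_j$, necessarily $H_{j+1}\not\supseteq L_j$, which forces $\dim(L_{j+1})=n-j-1$. After $d$ steps $L:=L_d$ is $k$-linear of dimension $n-d$ and contains $P$, and each $U_i\cap L$ is proper of dimension $0$; its component through $P$ is then reduced to $\{P\}$, which is exactly the assertion that $L$ meets every $U_i$ only in a neighbourhood of $P$.

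The main obstacle is making the degree bound propagate through the recursion: this is why properness must be preserved at each step, so that le th\'eor\`eme \ref{bezout} yields $\deg(U_i\cap L_{j+1})\leqslant\deg(U_i\cap L_j)$ and keeps the bound $\sum_i\#\mathcal{C}(U_i\cap L_j)\leqslant\sum_i\deg(U_i)$ valid throughout. A secondary point requiring care is that a hyperplane through $P$ may contain a positive-dimensional $W$ whether or not $W$ passes through $P$; but in all cases $\bigcap_{H\ni P}H=\{P\}$ rules out $W$ being contained in every such hyperplane, so the forbidden locus is always a proper subvariety and the dimension bound $\leqslant n-2$ holds uniformly.
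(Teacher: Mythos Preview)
Your proof is correct and follows essentially the same strategy as the paper: construct $L$ as a complete intersection $H_1\cap\cdots\cap H_d$ of hyperplanes through $P$, chosen inductively so that at each stage no component of any $U_i\cap L_j$ is swallowed, and conclude by a counting argument that an admissible hyperplane always exists. The only noteworthy difference is in the count itself: the paper bounds the bad hyperplanes containing a fixed component of $U_i\cap L_j$ (of dimension $d-j$) by $\#\Gr(n-d-1,n-d)(k)=\#\mathbb P^{n-d-1}(k)$ via the linear span, which is sharper than your uniform bound $\#\mathbb P^{n-2}(k)$ obtained from the mere fact that the component has dimension $\geqslant1$; but both estimates suffice under the hypothesis $\#k\geqslant\sum_i\deg(U_i)$, and your version has the virtue of handling cleanly the case where $P$ does not lie in the span of the component.
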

\begin{proof}
S'il existe un sous-sch\'ema $k$-lin\'eaire ferm\'e $L$ de $\mathbb P(E)$ de dimension $n-d$ qui intersecte tous les $U_1,\ldots,U_r$ proprement en le point $P$, alors pour tout sous-sch\'ema $k$-lin\'eaire ferm\'e de $\mathbb P(E)$ passant par $P$ contenu dans $L$, il intersecte $U_1,\ldots,U_r$ seulement au voisinage de $P$. Donc on a besoin de prouver qu'il existe un sous-sch\'ema $k$-lin\'eaire ferm\'e $L'$ de $\mathbb P(E)$ de dimension $n-d$ tel que $\{P\}$ soit une composante propre de l'intersection $L'\cdot U_1\intersect U_r$ dans $\mathbb P(E)$.

On d\'esigne par $\mathscr H_P$ l'ensemble des $k$-hyperplans projectifs passant par le point $P$, alors on a $\mathscr H_P=\Gr(n-1,E^\vee)(k)$. D'abord, on d\'emontrera que l'on peut trouver une $H_1\in\mathscr H_P$ qui intersecte tous les $U_i$ proprement. Pour un $U_i$ fix\'e, ses composantes irr\'eductibles sont contenues dans au plus $\deg(U_i)$ sous-sch\'emas $k$-lin\'eaires ferm\'es de $\mathbb P(E)$ de dimension $d$. De plus, pour un sous-sch\'ema $k$-lin\'eaire ferm\'e de $\mathbb P(E)$ de dimension $d$ fix\'e, il existe $\#\Gr(n-d-1,n-d)(k)$ hyperplans qui contiennent ce sous-sch\'ema $k$-lin\'eaire ferm\'e de $\mathbb P(E)$. Si $k$ est un corps fini, $\#\Gr(m,n)(k)$ \'etant calcul\'e dans le lemme \ref{grassmanne}, alors on peut comfirmer que l'on a l'in\'egalit\'e
\begin{eqnarray*}
  \#\mathscr H_P&=&\#\Gr(n-1,n)(k)\\
  &>&\left(\deg (U_1)+\cdots+\deg (U_r)\right)\#\Gr(n-d-1,n-d)(k),
\end{eqnarray*}
 lorsque $\#k\geqslant r\geqslant1$ et $\#k\geqslant2$. Donc il existe toujours un tel hyperplan $H_1$.

 Si $k$ est infini, il toujours existe un hyperplan $H_1\in\mathscr H_P$ qui satisfait que les sch\'emas $U_1,\ldots,U_r, H_1$ s'intersectent proprement en une composante irr\'eductible contenant le point $P$.

Si on a d\'ej\`a trouv\'e des hyperplans $H_1,\ldots,H_{t-1}\in \mathscr H_P$, tels que les sch\'emas $U_i,H_1,\ldots,H_{t-1}$ s'intersectent proprement pour tout $i=1,\ldots,r$, o\`u $1\leqslant t\leqslant d$. D'apr\`es le th\'eor\`eme de B\'ezout (le th\'eor\`eme \ref{bezout}), on obtient qu'il y a au plus $\deg (U_i)$ \'el\'ements dans $\mathcal C(H_1\cdot H_2\intersect H_{t-1}\cdot U_i)$, o\`u tout \'el\'ement est de dimension $d-t+1$. De plus, tout \'el\'ement dans  $\mathcal C(H_1\cdot H_2\intersect H_{t-1}\cdot U_i)$ est contenu dans au plus un sous-sch\'ema $k$-lin\'eaire ferm\'e de $\mathbb P(E)$ de dimension $d-t+1$, o\`u $i=1,\ldots,r$. Si $k$ est un corps fini, d'apr\`es la proposition \ref{grassmanne}, on peut confirmer que l'on a
\begin{eqnarray*}
  & &\#\Gr(n-t,n-t+1)(k)\\
  &>&\left(\deg (U_1)+\cdots+\deg (U_r)\right)\#\Gr(n-d-1,n-d)(k),
\end{eqnarray*}
 lorsque $\#k\geqslant r\geqslant1$, $\#k\geqslant2$ et $t\leqslant d$. Donc on peut trouver un sous-sch\'ema $k$-lin\'eaire ferm\'e de $\mathbb P(E)$ de dimension $n-t$ passant par $P$ contenu dans $H_1\cap\cdots\cap H_{t-1}$, qui intersecte tous les \'el\'ements dans $\mathcal C(H_1\cdot H_2\intersect H_{t-1}\cdot U_i)$ proprement pour tout $i=1,\ldots,r$.

  Tout sous-sch\'ema $k$-lin\'eaire ferm\'e de $\mathbb P(E)$ passant par $P$ contenu dans $H_1\cap\cdots\cap H_{t-1}$ peut \^etre relev\'e \`a un hyperplan dans $\mathscr H_P$. On se rel\`eve ce sous-sch\'ema $k$-lin\'eaire ferm\'e de $\mathbb P(E)$ \`a $H_t\in \mathscr H_P$ telle que $H_1\cap\cdots\cap H_{t-1}\cap H_t$ soit une intersection compl\`ete.

 Si $k$ est infini, il toujours existe un hyperplan $H_t\in\mathscr H_P$ qui satisfait que les sch\'emas projectifs $U_1,\ldots,U_r, H_1,\ldots,H_{t-1},H_{t}$ s'intersectent proprement en une composante irr\'eductible contenant le point $P$.

Donc on peut trouver une suite des \'el\'ements $H_1,H_2,\ldots,H_{d}\in\mathscr H_P$, tels que les sch\'emas $H_1,H_2,\ldots,H_{d},U_i$ s'intersectent proprement en le point $P$ pour tout $i=1,\ldots,r$. Le sous-sch\'ema $k$-lin\'eaire ferm\'e de $\mathbb P(E)$ d\'efini par l'intersection compl\`ete de $H_1,H_2,\ldots,H_{d}$ intersecte tous les $U_i$ proprement en le point $P$, o\`u $i=1,\ldots,r$.
\end{proof}

\section{D\'emonstration du th\'eor\`eme \ref{chongshu2}}
Ce paragraphe est consancr\'e \`a la d\'emonstration du th\'eor\`eme \ref{chongshu2}. Soient $k$ un corps parfait, et $X_1,\ldots,X_r$ des sous-sch\'emas ferm\'es de dimension pure de $\mathbb P(E)$ qui s'intersectent proprement. Pour tout $Y\in\mathcal C(X_1\intersect X_r)$, on construit un arbre d'intersection $\mathscr T_Y$ de niveau $\delta=\max\limits_{i\in\{1,\ldots,r\}}\{\deg(X_i)\}$ dont la racine est $Y$. La strat\'egie consiste en un raisonnement par r\'ecurrence sur la profondeur maximale des arbres d'intersection $\mathscr T_Y$ (voir \S\ref{definition of intersection tree} pour la d\'efinition). Soit $M$ un sommet de l'un des arbres d'intersection $\mathscr T_Y$. On suppose que $M$ satisfait les conditions suivantes: pour tout sommet $Z$ des arbres d'intersection $\{\mathscr T_Y\}_{Y\in\mathcal C(X_1\intersect X_r)}$, si $M$ est un sous-sch\'ema propre de $Z$, alors il existe un descendant de $Z$ qui est une occurence de $M$. Le but de ce paragraphe est de d\'emontrer l'in\'egalit\'e \eqref{no auxillary scheme} ci-dessous:
  \begin{equation*}
  \sum_{Y\in\mathcal C(X_1\intersect X_r)}W_{\mathscr T_Y}(M)i(Y;X_1\intersect X_r;\mathbb P(E))\geqslant \mu_{M}(X_1)\cdots\mu_{M}(X_r).
  \end{equation*}

\begin{defi}\label{Cs}
  Soit $s$ un entier positif. On d\'efinit $\mathcal C_s$ comme l'ensemble des sommets de profondeur $s$ dans les arbres d'intersection $\mathscr T_Y$, o\`u $Y\in\mathcal C(X_1\intersect X_r)$. De plus, on d\'efinit  $\mathcal C_*=\bigcup\limits_{s\geqslant0}\mathcal C_s$.
\end{defi}
 Pour tout entier positif $s$, on d\'efinit un sous-ensemble de $\mathcal C_s$ comme la suite.
\begin{defi}\label{Zs}
  Soit $s$ un entier positif. On d\'efinit $\mathcal{Z}_s$ comme le sous-ensemble de $\mathcal C_s$ des \'el\'ements $N$ qui satisfont la condition suivante: pour tout sommet $Z$ des arbres d'intersection $\{\mathscr T_Y\}_{Y\in\mathcal C(X_1\intersect X_r)}$, si $N$ est un sous-sch\'ema propre de $Z$, alors il existe un descendant de $Z$ qui est une occurrence de $N$. De plus, on d\'efinit $\mathcal Z_*=\bigcup\limits_{s\geqslant0}\mathcal Z_s$.
\end{defi}
Par d\'efinition, on a $\mathcal{Z}_0=\mathcal C_0=\mathcal C(X_1\intersect X_r)$. On d\'emontrera le th\'eor\`eme \ref{chongshu2} pour les sommets dans l'ensemble $\mathcal Z_*$.

L'id\'ee principale de la d\'emonstration du th\'eor\`eme \ref{chongshu2} est comme ci-dessous: si $M\in \mathcal Z_0$, le c\^ot\'e gauche de l'in\'egalit\'e \eqref{no auxillary scheme} est une multiplicit\'e d'intersection en $M$, alors le th\'eor\`eme \ref{chongshu2} d\'ecoule de la proposition \ref{inqmult}. Si $M\in \mathcal Z_*\smallsetminus\mathcal Z_0$, comme $k$ est un corps parfait, la multiplicit\'e d'intersection et la multiplicit\'e de point dans un sch\'ema v\'erifient des proprit\'et\'es d'invariance sous une extension galoisienne finie de corps comme dans les propositions \ref{basechangemultinter} et \ref{mult of point under base change}. D'abord on fixe une extension galoisienne finie de corps $k'/k$ tel que $M^{\mathrm{reg}}(k')\neq\emptyset$ et que le cardinal de $k'$ soit assez grand. Alors on peut construire un $k'$-sch\'ema auxiliaire tel que l'intersection de $X_{1,k'},\ldots,X_{r,k'}$ et ce sch\'ema soit propre en une composante irr\'eductible de $M_{k'}$ (le sch\'ema auxiliaire est en fait un cylindre passant par cette composante irr\'eductible, dont l'existence est assur\'ee
lorsque $k'$ est assez grand, voir la d\'efinition \ref{cylindre} pour la d\'efinition de cylindre). Dans la suite, on d\'emontre le c\^ot\'e gauche de l'in\'egalit\'e \eqref{no auxillary scheme} est plus grand ou \'egal \`a la multiplicit\'e d'intersection de $X_{1,k'},\ldots,X_{r,k'}$ et le $k'$-sch\'ema auxiliaire en cette composante irr\'eductible de $M_{k'}$. Par la comparaison entre la multiplicit\'e de ce produit d'intersection \`a cette composante irr\'eductible de $M_{k'}$ et les multiplicit\'es de $M$ dans $X_1,\ldots,X_r$ et le sch\'ema auxiliaire (la proposition \ref{inqmult}), on obtient le r\'esultat.

\subsection*{D\'emonstration du th\'eor\`eme \ref{chongshu2}}
Dans la d\'emonstration, la composante irr\'eductible $M\in\mathcal Z_*$ est comme dans l'\'enonc\'e du th\'eor\`eme \ref{chongshu2}.

\textbf{\'Etape 1: la profondeur du sommet est z\'ero. - }
Si $M\in \mathcal Z_0=\mathcal C(X_1\intersect X_r)$, alors pour tout $Y\in\mathcal C(X_1\intersect X_r)$, on a $W_{\mathscr T_Y}(M)=0$ ou $1$. Donc l'assertion du th\'eor\`eme \ref{chongshu2} est une cons\'equence directe de la proposition \ref{inqmult}, ce qui montre que la multiplicit\'e d'intersection du produit d'intersection de $X_1\intersect X_r$ en une composante irr\'eductible est plus grande ou \'egale au produit des multiplicit\'es de cette composante dans $X_1,\ldots,X_r$.

\textbf{\'Etape 2: la profondeur du sommet est strictement plus grande que z\'ero. - }
Si $M\in \mathcal Z_*\smallsetminus\mathcal Z_0$ On d\'emontrera l'\'enonc\'e suivant.

 \begin{prop}\label{auxillary scheme}
 Soit $n=\rg_k(E)-1$. Soit $k'/k$ une extension galoisienne finie de corps, telle que \[\#k'\geqslant\delta^{\sum\limits_{i=1}^r\dim(X_i)-(r-1)(n-1)}\]
 et que $M^{\mathrm{reg}}(k')\neq\emptyset$. Alors pour toute composante irr\'eductible $M'\in\mathcal C(M_{k'})$, il existe un cylindre $M^{0}_{k'}$ dans $\mathbb P(E_{k'})$, tel que $M'\in\mathcal C(X_{1,k'}\intersect X_{r,k'}\cdot M^0_{k'})$ et que les sch\'emas $X_{1,k'},\ldots,X_{r,k'},M^0_{k'}$ s'intersectent proprement en la composante $M'$, et
\begin{eqnarray*}
    & &\sum_{Y\in\mathcal C(X_1\intersect X_r)}i(Y;X_1\intersect X_r;\mathbb P(E))W_{\mathscr T_Y}(M)\\
    &\geqslant& i(M';X_{1,k'}\intersect X_{r,k'}\cdot M^0_{k'};\mathbb P(E_{k'})).\nonumber
  \end{eqnarray*}
\end{prop}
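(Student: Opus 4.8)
The plan is to prove the proposition in two movements: a global rewriting of the right-hand side via associativity and base change that reduces everything to a single intersection tree, followed by an inductive estimate inside that tree. Throughout I fix, as in the hypothesis, a finite Galois extension $k'/k$ with $M^{\mathrm{reg}}(k')\neq\emptyset$ and $\#k'$ as large as prescribed, and I choose a component $M'\in\mathcal C(M_{k'})$; by Lemma \ref{regular point induces geometrically integral} the scheme $M'$ is geometrically integral and carries a regular $k'$-point $P$. To build the cylinder, I first apply Proposition \ref{numofproper} over $k'$ — its degree hypothesis being met because the bound on $\#k'$ dominates, via B\'ezout (Theorem \ref{bezout}), the sum of degrees of the finitely many schemes to avoid (the $X_{i,k'}$ and the components of the various $Y_{k'}$) — to obtain a $k'$-linear subscheme $L$ through $P$ meeting all these schemes properly there. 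Feeding $L$ into Proposition \ref{cylindre} with base $U=M'$ yields a cylinder $M^0_{k'}$ of dimension $n+\dim(M)-\dim(Y)$; a direct dimension count shows this is exactly the dimension forcing $X_{1,k'},\ldots,X_{r,k'},M^0_{k'}$ to meet properly at $M'$, which is the first assertion, and it gives the identity $i(M';W\cdot M^0_{k'};\mathbb P(E_{k'}))=\mu_{M'}(W)$ for every component $W\in\mathcal C(Y_{k'})$ containing $M'$.

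Next I rewrite the right-hand side. By associativity (Theorem \ref{comm-ass}) I expand $i(M';X_{1,k'}\intersect X_{r,k'}\cdot M^0_{k'};\mathbb P(E_{k'}))$ as a sum over $W\in\mathcal C(X_{1,k'}\intersect X_{r,k'})$ of the products $i(W;X_{1,k'}\intersect X_{r,k'};\mathbb P(E_{k'}))\,i(M';W\cdot M^0_{k'};\mathbb P(E_{k'}))$. Lemma \ref{irreducible component change after extension of fields} groups the $W$ according to the unique $Y\in\mathcal C(X_1\intersect X_r)$ of which they are components, and Proposition \ref{basechangemultinter} identifies $i(W;X_{1,k'}\intersect X_{r,k'};\mathbb P(E_{k'}))$ with $i(Y;X_1\intersect X_r;\mathbb P(E))$. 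Using the cylinder identity, then Proposition \ref{sumofmult} (the length factors equal $1$ since $Y$, integral over the perfect field $k$, is geometrically reduced, so $Y_{k'}$ is reduced), then Proposition \ref{mult of point under base change}, I obtain $\sum_{W\in\mathcal C(Y_{k'})}i(M';W\cdot M^0_{k'};\mathbb P(E_{k'}))=\mu_{M'}(Y_{k'})=\mu_M(Y)$. Hence the right-hand side equals $\sum_{Y}i(Y;X_1\intersect X_r;\mathbb P(E))\mu_M(Y)$, and the whole proposition reduces to the termwise inequality $W_{\mathscr T_Y}(M)\geqslant\mu_M(Y)$ for each $Y$.

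I prove this last inequality by induction on $\dim(Y)-\dim(M)$ inside a fixed tree rooted at an integral $Y$ with $M\subseteq Y$. If $\dim(M)=\dim(Y)$ then $M=Y$ and $\mu_M(Y)=1\leqslant W_{\mathscr T_Y}(M)$. Otherwise the defining condition of $\mathcal Z_*$ (Definition \ref{Zs}) forces the root not to be a leaf; writing $\widetilde Y$ for its label, the weight decomposes as $W_{\mathscr T_Y}(M)=\sum_{C\in\mathcal C(Y\cdot\widetilde Y)}i(C;Y\cdot\widetilde Y;\mathbb P(E))\,W_{\mathscr T_C}(M)$, where $\mathscr T_C$ is the subtree at the child $C$. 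For each $C$ with $M\subsetneq C$ the $\mathcal Z$-condition guarantees an occurrence of $M$ below $C$, so the induction hypothesis (applicable since $\dim(C)<\dim(Y)$) gives $W_{\mathscr T_C}(M)\geqslant\mu_M(C)$; when $M=C$ both sides are $1$; and when $M\not\subseteq C$ one has $\mu_M(C)=0$. Therefore $W_{\mathscr T_Y}(M)\geqslant\sum_{C}i(C;Y\cdot\widetilde Y;\mathbb P(E))\mu_M(C)$, and it remains to bound this sum below by $\mu_M(Y)$.

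The hard part will be this final \emph{local} inequality $\sum_{C\in\mathcal C(Y\cdot\widetilde Y)}i(C;Y\cdot\widetilde Y;\mathbb P(E))\mu_M(C)\geqslant\mu_M(Y)$. Here I again pass to $k'$ and introduce a cylinder through $M'$ whose dimension is adapted to the child level, namely $n+\dim(M)-\dim(C)$; this choice simultaneously makes the three schemes $M^0,Y_{k'},\widetilde Y_{k'}$ meet properly at $M'$ and makes $i(M';C'\cdot M^0;\mathbb P(E_{k'}))=\mu_{M'}(C')$ for each $C'\in\mathcal C(C_{k'})$. By the same associativity and base-change manipulation as in the second paragraph (again invoking Lemma \ref{irreducible component change after extension of fields} and Propositions \ref{basechangemultinter}, \ref{sumofmult}, \ref{mult of point under base change}), one identifies $i(M';M^0\cdot Y_{k'}\cdot\widetilde Y_{k'};\mathbb P(E_{k'}))$ with $\sum_{C}i(C;Y\cdot\widetilde Y;\mathbb P(E))\mu_M(C)$, while Proposition \ref{inqmult} applied to the proper intersection of $M^0,Y_{k'},\widetilde Y_{k'}$ gives $i(M';M^0\cdot Y_{k'}\cdot\widetilde Y_{k'};\mathbb P(E_{k'}))\geqslant\mu_{M'}(M^0)\mu_{M'}(Y_{k'})\mu_{M'}(\widetilde Y_{k'})$. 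Since the cylinder is regular along its base, so $\mu_{M'}(M^0)=1$, and $M\subseteq\widetilde Y$ forces $\mu_{M'}(\widetilde Y_{k'})\geqslant1$, this is $\geqslant\mu_{M'}(Y_{k'})=\mu_M(Y)$, closing the induction and hence proving the proposition. The two points on which I expect to spend the most care are verifying that a single linear space $L$ can be chosen to meet \emph{all} the relevant schemes properly at $P$ (so that the cylinder identities hold at every level), and keeping track of the base-change invariances of Propositions \ref{basechangemultinter} and \ref{mult of point under base change} so that each inequality over $k$ is faithfully matched by its counterpart over $k'$.
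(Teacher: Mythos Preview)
Your proof is correct and follows essentially the same strategy as the paper: both arguments construct cylinders via Propositions \ref{numofproper} and \ref{cylindre}, pass to $k'$ using Propositions \ref{basechangemultinter} and \ref{mult of point under base change}, reduce everything to the key termwise estimate $W_{\mathscr T_Y}(M)\geqslant\mu_M(Y)$, and establish that estimate inductively via associativity (Theorem \ref{comm-ass}) together with Proposition \ref{inqmult}. The only difference is organizational---you isolate the termwise reduction first and then induct on $\dim(Y)-\dim(M)$ inside each tree, whereas the paper carries the full statement of the proposition through an induction on the maximal depth of the family $\{\mathscr T_Y\}$ (applying the induction hypothesis with $r=2$ to each pair $(Y,\widetilde Y)$)---but the cylinder construction and the chain of inequalities performed at each inductive step are identical.
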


Si on admet la proposition \ref{auxillary scheme}, d'apr\`es la proposition \ref{inqmult}, on obtient
  \begin{eqnarray*}
     i(M';X_{1,k'}\intersect X_{r,k'}\cdot M^0_{k'};\mathbb P(E_{k'}))&\geqslant&\mu_{M'}(X_{1,k'})\cdots\mu_{M'}(X_{r,k'})\mu_{M'}(M^0_{k'})\\
     &\geqslant&\mu_{M'}(X_{1,k'})\cdots\mu_{M'}(X_{r,k'})\\
&=&\mu_{M}(X_1)\cdots\mu_{M}(X_r),
  \end{eqnarray*}
o\`u l'\'egalit\'e provient de la proposition \ref{mult of point under base change}. D'o\`u on d\'emontre l'assertion du th\'eor\`eme \ref{chongshu2}.

Pour d\'emontrer la proposition \ref{auxillary scheme}, on raisonne par r\'ecurrence sur la profondeur maximale $s$ des $\mathscr T_Y$, o\`u $Y\in\mathcal C(X_1\intersect X_r)$.

\textbf{\'Etape 2-1: le cas o\`u $s=1$. - }
D'abord, on va d\'emotrer le cas de $s=1$. On suppose que $\mathcal Z_*\smallsetminus\mathcal Z_0\neq\emptyset$. Dans ce cas-l\`a, on va d\'emontrer le lemme suivant.
\begin{lemm}\label{height of vertex=1}
  Soient $M\in\mathcal Z$, et $k'/k$ une extension galoisienne finie de corps telle que $\#k'\geqslant\deg(X_1)\cdots\deg(X_r)$ et $M^{\mathrm{reg}}(k')\neq\emptyset$. Pour tout $M'\in\mathcal C(M_{k'})$, il existe un cylindre $M^{0}_{k'}$ dans $\mathbb P(E_{k'})$, tel que $M'\in\mathcal C(X_{1,k'}\intersect X_{r,k'}\cdot M^0_{k'})$ et que les sch\'emas $X_{1,k'},\ldots,X_{r,k'},M^0_{k'}$ s'intersectent proprement en la composante irr\'eductible $M'$. De plus, l'in\'egalit\'e
  \begin{eqnarray*}
    & &\sum_{Y\in\mathcal C(X_1\intersect X_r)}i(Y;X_1\intersect X_r;\mathbb P(E))W_{\mathscr T_Y}(M)\\
    &\geqslant& i(M';X_{1,k'}\intersect X_{r,k'}\cdot M^0_{k'};\mathbb P(E_{k'}))
  \end{eqnarray*}
  est v\'erifi\'ee.
\end{lemm}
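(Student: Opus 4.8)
The plan is to build the auxiliary cylinder $M^0_{k'}$ first, and then to reduce the stated inequality to Proposition \ref{inqmult} by combining the associativity of intersection multiplicities (Theorem \ref{comm-ass}) with the base-change invariances of Propositions \ref{basechangemultinter} and \ref{mult of point under base change}. Since we are in the case $s=1$, the vertex $M\in\mathcal Z_*\smallsetminus\mathcal Z_0$ lies at depth $1$, hence occurs only as a child of certain roots $Y\in\mathcal C(X_1\intersect X_r)$; because $M\in\mathcal Z_1$, the saturation condition forces $M$ to occur as a child of \emph{every} root $Y$ with $M\subsetneq Y$ (the only descendants of a root being its children when $s=1$). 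Each such $Y$ therefore carries a label $\widetilde Y$ with $M\in\mathcal C(Y\cdot\widetilde Y)$, so that $W_{\mathscr T_Y}(M)=i(M;Y\cdot\widetilde Y;\mathbb P(E))$ (the root weight is $1$ and $M$ occurs at most once among the distinct components of $Y\cdot\widetilde Y$), while $W_{\mathscr T_Y}(M)=0$ when $M\not\subseteq Y$. Set $m:=\dim Y$, the common dimension of the roots; note $\dim M<m$.

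To construct the cylinder I would fix $M'\in\mathcal C(M_{k'})$. Since $M^{\mathrm{reg}}(k')\neq\emptyset$ and the components of $M_{k'}$ are Galois-conjugate (hence each meets the regular locus), one has $(M')^{\mathrm{reg}}(k')\neq\emptyset$, so $M'$ is geometrically integral by Lemma \ref{regular point induces geometrically integral}. Picking $P\in(M')^{\mathrm{reg}}(k')$, I would apply Proposition \ref{numofproper} to the finite family of those $Y'\in\mathcal C(Y_{k'})$ (for $Y$ ranging over $\mathcal C(X_1\intersect X_r)$) containing $M'$: these have pure dimension $m$ and pass through $P$, and by the theorem of B\'ezout (Theorem \ref{bezout}) together with $\deg(Y_{k'})=\deg(Y)=\sum_{Y'\in\mathcal C(Y_{k'})}\deg(Y')$ their total degree is at most $\sum_Y\deg(Y)\leqslant\prod_{i=1}^r\deg(X_i)\leqslant\#k'$. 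This produces a $k'$-linear subspace $L$ of dimension $n-m$ through $P$ meeting every such $Y'$ properly at $P$, and Proposition \ref{cylindre} then yields the cylinder $M^0_{k'}$ through $M'$ with direction $L$, for which $X_{1,k'},\ldots,X_{r,k'},M^0_{k'}$ intersect properly at $M'$ and $i(M';Y'\cdot M^0_{k'};\mathbb P(E_{k'}))=\mu_{M'}(Y')$ for each $Y'\supseteq M'$.

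With the cylinder in hand, I would expand the right-hand side by associativity (Theorem \ref{comm-ass}), grouping the factors $X_{1,k'},\ldots,X_{r,k'}$ against $M^0_{k'}$ and using $\mathcal C(X_{1,k'}\intersect X_{r,k'})=\bigsqcup_{Y}\mathcal C(Y_{k'})$ from Lemma \ref{irreducible component change after extension of fields}:
\[
i(M';X_{1,k'}\intersect X_{r,k'}\cdot M^0_{k'};\mathbb P(E_{k'}))=\sum_{Y}\sum_{\substack{Y'\in\mathcal C(Y_{k'})\\ Y'\supseteq M'}} i(Y';X_{1,k'}\intersect X_{r,k'};\mathbb P(E_{k'}))\,\mu_{M'}(Y'),
\]
where $i(M';Y'\cdot M^0_{k'};\mathbb P(E_{k'}))=\mu_{M'}(Y')$ by the cylinder property (terms with $Y'\not\supseteq M'$ vanish). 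By Proposition \ref{basechangemultinter} the inner intersection number equals $i(Y;X_1\intersect X_r;\mathbb P(E))$. Since $k$ is perfect, $Y_{k'}$ is reduced, so all lengths in Proposition \ref{sumofmult} equal $1$, giving $\sum_{Y'\supseteq M'}\mu_{M'}(Y')=\mu_{M'}(Y_{k'})=\mu_M(Y)$, the last equality by Proposition \ref{mult of point under base change}. Hence the right-hand side equals $\sum_{Y} i(Y;X_1\intersect X_r;\mathbb P(E))\,\mu_M(Y)$.

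It then remains to verify the pointwise bound $W_{\mathscr T_Y}(M)\geqslant\mu_M(Y)$ for each $Y$. When $M\not\subseteq Y$ both sides vanish; when $M\subsetneq Y$, Proposition \ref{inqmult} applied to the pair $Y,\widetilde Y$ over the perfect field $k$ gives $W_{\mathscr T_Y}(M)=i(M;Y\cdot\widetilde Y;\mathbb P(E))\geqslant\mu_M(Y)\,\mu_M(\widetilde Y)\geqslant\mu_M(Y)$, using $M\subseteq\widetilde Y$ and $\mu_M(\widetilde Y)\geqslant1$. Weighting by $i(Y;X_1\intersect X_r;\mathbb P(E))\geqslant0$ and summing over $Y$ then yields the lemma. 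I expect the cylinder step to be the main obstacle: one must arrange a \emph{single} linear subspace $L$ that meets all the components $Y'$ through $P$ properly at once, so that Proposition \ref{cylindre} applies simultaneously to each $V=Y'$ and the full $(r+1)$-fold intersection is proper at $M'$; this is precisely where the hypotheses $M^{\mathrm{reg}}(k')\neq\emptyset$ (via Lemma \ref{regular point induces geometrically integral}) and $\#k'\geqslant\deg(X_1)\cdots\deg(X_r)$ (via Proposition \ref{numofproper} and B\'ezout) enter.
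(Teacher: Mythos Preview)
Your proposal is correct and follows essentially the same route as the paper: the paper also identifies $W_{\mathscr T_Y}(M)=i(M;Y\cdot\widetilde Y;\mathbb P(E))$, bounds it below by $\mu_M(Y)$ via Proposition~\ref{inqmult}, passes to $k'$ using Propositions~\ref{sumofmult}, \ref{basechangemultinter} and \ref{mult of point under base change}, builds the cylinder $M^0_{k'}$ from Propositions~\ref{numofproper} and~\ref{cylindre} (using B\'ezout to bound the total degree of the $Y'$ by $\prod_i\deg(X_i)\leqslant\#k'$), and collapses the double sum by associativity. The only cosmetic differences are the order of presentation (you construct the cylinder first, the paper does it after the multiplicity chain) and the justification that $M'$ has a regular $k'$-point (you invoke Galois conjugacy of the components of $M_{k'}$, the paper invokes Proposition~\ref{sumofmult} and pure-dimensionality).
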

\begin{proof}
 Pour tout $Y\in \mathcal C(X_1\intersect X_r)$, on d\'esigne par $\widetilde{Y}$ l'\'etiquette de $Y$ dans l'arbre d'intersection consid\'er\'e comme un sch\'ema. Par la d\'efinition dans \S\ref{definition of intersection tree}, on a
 \begin{equation}\label{definition of W_T}
W_{\mathscr T_Y}(M)=i(M;Y\cdot \widetilde{Y};\mathbb P(E)).
\end{equation}
En effet, comme $s=1$, si $M$ appara\^it dans les d\'escendants de $Y$, il n'appara\^it qu'une seule fois. De plus, on a
\begin{equation}\label{s=1-comparation of mult}
i(M;Y\cdot \widetilde{Y};\mathbb P(E))\geqslant\mu_M(Y)\mu_M(\widetilde{Y})\geqslant\mu_M(Y)
\end{equation}
d'apr\`es la proposition \ref{inqmult}. Par la proposition \ref{mult of point under base change}, on a
\begin{equation}\label{invariance under base change}
\mu_M(Y)=\mu_{M'}(Y_{k'}).
\end{equation}

Comme $k$ est un corps parfait, le sch\'ema $Y_{k'}$ est r\'eduit d'apr\`es \cite[Proposition 3.2.7]{LiuQing}. Donc $\O_{Y_{k'},Y'}$ est un anneau local artinien r\'eduit, qui est un corps (cf. \cite[Proposition 8.9]{AM_commutative_algebra}, l'id\'eal maximal de $\O_{Y_{k'},Y'}$ est nul). On en d\'eduit $\ell_{\O_{Y_{k'},Y'}}(\O_{Y_{k'},Y'})=1$. D'apr\`es la proposition \ref{sumofmult}, on a
\begin{equation}\label{sum of multiplicity}
\mu_{M'}(Y_{k'})=\sum_{Y'\in\mathcal C(Y_{k'})}\mu_{M'}(Y').
\end{equation}

On obtient donc
\begin{eqnarray*}
& &\sum_{Y\in \mathcal C(X_1\intersect X_r)}i(Y;X_1\intersect X_r;\mathbb P(E))W_{\mathscr T_{Y}}(M)\\
&\geqslant&\sum_{Y\in\mathcal C(X_1\intersect X_r)}i(Y;X_1\intersect X_r;\mathbb P(E))\sum_{Y'\in\mathcal C(Y_{k'})}\mu_{M'}(Y')
\end{eqnarray*}
d'apr\`es les in\'egalit\'es \eqref{definition of W_T}, \eqref{s=1-comparation of mult}, \eqref{invariance under base change}, et \eqref{sum of multiplicity}.

En outre, on a
   \begin{eqnarray}\label{why we assume the number of k'}
   & &\sum_{Y'\in\mathcal C(X_{1,k'}\intersect X_{r,k'})}\deg (Y')\\
   &\leqslant&\sum_{Y'\in\mathcal C(X_{1,k'}\intersect X_{r,k'})}i(Y';X_{1,k'}\intersect X_{r,k'};\mathbb P(E_{k'}))\deg (Y')\nonumber\\
   &=&\sum_{Y\in\mathcal C(X_1\intersect X_r)}\sum_{Y'\in\mathcal C(Y_{k'})}i(Y';X_{1,k'}\intersect X_{r,k'};\mathbb P(E_{k'}))\deg (Y')\nonumber\\
   &=&\deg(X_{1,k'})\cdots\deg(X_{r,k'})\nonumber\\
   &=&\deg(X_1)\cdots\deg(X_r)\nonumber,
   \end{eqnarray}
   o\`u la premi\`ere \'egalit\'e provient de la proposition \ref{basechangemultinter}, la deuxi\`eme \'egalit\'e provient du th\'eor\`eme de B\'ezout (le th\'eor\`eme \ref{bezout}), et la derni\`ere \'egalit\'e provient du fait que le degr\'e d'un sous-sch\'ema ferm\'e de $\mathbb P(E)$ est invariant sous extension de corps. Donc on a
    \[\#k'\geqslant\sum_{Y'\in\mathcal C(X_{1,k'}\intersect X_{r,k'})}\deg (Y')\]
d'apr\`es l'in\'egalit\'e \eqref{why we assume the number of k'}.

     On d\'egine par $\mathcal D(M)$ le sous-ensemble de $Y\in\mathcal C(X_1\intersect X_r)$ tel que $M$ appara\^it \`a une d\'escendant de $Y$ dans l'arbre d'intersection $\mathscr T_Y$.

 La composante $M$ admet un $k'$-point r\'egulier. Comme $k$ est parfait, d'apr\`es la proposition \ref{sumofmult}, la composante $M'$ admet un $k'$-point rationnel $P$ de multiplicit\'e $1$, qui est r\'egulier car $M_{k'}$ est de dimension pure. D'apr\`es la proposition \ref{numofproper}, on obtient qu'il existe un sous-sch\'ema $k'$-lin\'eaire ferm\'e de $\mathbb P(E_{k'})$ de dimension $n-\dim (Y)=n-\dim(Y_{k'})$ qui intersecte tous les $Y'\in\bigcup\limits_{Y\in\mathcal D(M)}\mathcal C(Y_{k'})$ proprement en le point $P$ ou en les composantes qui ne contiennent pas $P$. Dans ce cas-l\`a, ce sous-sch\'ema $k'$-lin\'eaire ferm\'e de $\mathbb P(E_{k'})$ intersecte $M'$ en ce point $k'$-r\'egulier de $M'$ seulement au voisinage de ce point. D'apr\`es la proposition \ref{cylindre}, on peut trouver un cylindre $M^0_{k'}$ de dimension $n-\dim(Y)+\dim(M)=n-\dim(Y')+\dim(M')$ dont la direction est d\'efinie par ce sous-sch\'ema $k'$-lin\'eaire ferm\'e de $\mathbb P(E_{k'})$, tel qu'il intersecte tous les $Y'\in\bigcup\limits_{Y\in\mathcal D(M)}\mathcal C(Y_{k'})$ proprement en la composante $M'$ ou en les composantes irr\'eductible qui ne contiennent pas $M'$. De plus, on a
\[\mu_{M'}(Y')=i( M';Y'\cdot M^0_{k'};\mathbb P(E_{k'}))\]
pour toute composante irr\'eductible $Y'\in\mathcal C(Y_{k'})$, $Y\in\mathcal C(X_1\intersect X_r)$.

D'apr\`es le lemme \ref{irreducible component change after extension of fields} et la proposition \ref{basechangemultinter}, on a
\begin{equation}\label{intersection multiplicity of Y}
  i(Y;X_1\intersect X_r;\mathbb P(E))=i(Y';X_{1,k'}\intersect X_{r,k'};\mathbb P(E_{k'})),
\end{equation}
o\`u $Y'\in \mathcal C(Y_{k'})$. Donc on obtient
\begin{eqnarray*}
  & &\sum_{Y\in\mathcal C(X_1\intersect X_r)}i(Y;X_1\intersect X_r;\mathbb P(E))\sum_{Y'\in\mathcal C(Y_{k'})}\mu_{M'}(Y')\\
  &=&\sum_{Y\in\mathcal C(X_1\intersect X_r)}\sum_{Y'\in\mathcal C(Y_{k'})}i(Y';X_{1,k'}\intersect X_{r,k'};\mathbb P(E_{k'}))i(M';Y'\cdot M^0_{k'};\mathbb P(E_{k'}))
\end{eqnarray*}
d'apr\`es l'\'egalit\'e \eqref{intersection multiplicity of Y}. Par la d\'efinition de $\mathcal Z_*$ (la d\'efinition \ref{Zs}), les composantes irr\'eductibles dans $\mathcal C(X_1\intersect X_r)\smallsetminus\mathcal D(M)$ ne contiennent pas $M$. Donc le cylindre $M^0_{k'}$ n'intersecte pas les composantes irr\'eductibles de l'intersection $X_{1,k'}\intersect X_{r,k'}$ dans $\mathcal C(X_{1,k'}\intersect X_{r,k'})\smallsetminus\{N\in\mathcal C(Y_{k'})|\;Y\in\mathcal D(M)\}$ en la comsposante $M'$. Alors d'apr\`es l'associativit\'e d'intersection propre (l'\'enonc\'e (ii) de la proposition \ref{comm-ass}), on a
\begin{eqnarray*}
  & &\sum_{Y\in\mathcal C(X_1\intersect X_r)}\sum_{Y'\in\mathcal C(Y_{k'})}i(Y';X_{1,k'}\intersect X_{r,k'};\mathbb P(E_{k'}))i(M';Y'\cdot M^0_{k'};\mathbb P(E_{k'}))\\
  &=&i(M';X_{1,k'}\intersect X_{r,k'}\cdot M^0_{k'};\mathbb P(E_{k'})).
\end{eqnarray*}
C'est la fin de la preuve du lemme \ref{height of vertex=1}, qui d\'emontre la proposition \ref{auxillary scheme} pour le cas de $s=1$.
\end{proof}
\textbf{\'Etape 2-2: du cas o\`u la profondeur maximale est $s-1$ au cas o\`u la profondeur maximale est $s$. - } Afin de d\'emontrer la proposition \ref{auxillary scheme}, on raisonne par r\'ecurrence sur la profondeur maximale des arbres d'intersection $\mathscr T_Y$, o\`u $Y\in\mathcal C(X_1\intersect X_r)$. On rappelle que l'on prend une extension galoisienne finie de corps $k'/k$ telle que
\[\#k'\geqslant\delta^{\sum\limits_{i=1}^r\dim(X_i)-(r-1)(n-1)}\]
 et que $M^{\mathrm{reg}}(k')\neq\emptyset$, o\`u $n=\rg_k(E)-1$.
\begin{proof}[D\'emonstration de la proposition \ref{auxillary scheme}]
Dans cette d\'emonstration, on maintient toutes les notations dans la d\'emonstration du lemme \ref{height of vertex=1}. On raisonne par r\'ecurrence sur la profondeur maximale $s$ des arbres d'intersection $\mathscr T_Y$, o\`u $Y\in\mathcal C(X_1\intersect X_r)$. Le cas de $s=1$ est d\'emontr\'e dans le lemme \ref{height of vertex=1}.

 Dans un arbre d'intersection, un fils d'un sommet est de codimension plus grande ou \'egale \`a $1$ dans ce sommet, donc on obtient que la valeur maximal de $s$ est $\dim(Y)$. Pour tout $Y\in\mathcal C(X_1\intersect X_r)$, on a $\dim(Y)=\sum\limits_{i=1}^r\dim(X_i)-(r-1)n$.

Maintenant on suppose que l'assertion est d\'emontr\'ee pour le cas o\`u la profondeur maximale des $\mathscr T_Y$ est $s-1$ pour tous les $Y\in\mathcal C(X_1\intersect X_r)$. Dans la suite, on d\'emontre le cas o\`u la profondeur maximale des $\{\mathscr T_Y\}_{Y\in\mathcal C(X_1\intersect X_r)}$ est $s$. Pour tout $N\in \mathcal C_*$, on d\'esigne par $\widetilde{N}$ l'\'etiquette de $N$ dans l'arbre d'intersection $\mathscr T_Y$.

Pour tous $N\in\mathcal C_*$, par la condition $\#k'\geqslant\delta^{\sum\limits_{i=1}^r\dim(X_i)-(r-1)(n-1)}$, on obtient l'in\'egalit\'e
\[\#k'\geqslant\deg(N)\deg(\widetilde{N})\]
d'apr\`es le th\'eor\`eme de B\'ezout (le th\'eor\`eme \ref{bezout}). Donc par le lemme \ref{height of vertex=1}, on peut utiliser l'hypoth\`ese de r\'ecurrence aux sous-arbres d'intersection des $\{\mathscr T_Y\}_{Y\in\mathcal C(X_1\intersect X_r)}$, dont les racines sont les sommets dans $\mathcal C_1$. D'apr\`es l'hypoth\`ese de r\'ecurrence et la d\'efinition \ref{Zs}, pour tout $Y\in\mathcal D(M)$, on peut trouver un cylindre $Z_Y$ dans $\mathbb P(E_{k'})$ de dimension $n-\dim(Y)+\dim(M)$, tel que $Y_{k'}$, $\widetilde{Y}_{k'}$ et $Z_Y$ s'intersectent proprement en $M'$, et
\begin{equation}\label{induction hypothesis inequality}
W_{\mathscr T_Y}(M)=\sum_{Y'\in\mathcal C(Y\cdot\widetilde{Y})}i(Y';Y\cdot\widetilde{Y};\mathbb P(E))W_{\mathscr T_{Y'}}(M)\geqslant i(M';Y_{k'}\cdot \widetilde{Y}_{k'}\cdot Z_Y;\mathbb P(E_{k'})),
\end{equation}
o\`u $\mathscr T_{Y'}$ est le sous-arbre d'intersection dont la racine est $Y'\in\mathcal C(Y\cdot\widetilde{Y})$.

Dans la suite, on estimera la multiplicit\'e d'intersection $ i(M';Y_{k'}\cdot \widetilde{Y}_{k'}\cdot Z_Y;\mathbb P(E_{k'}))$. Comme $k$ est un corps parfait, le sch\'ema $Y_{k'}$ est r\'eduit. D'o\`u l'on a
\begin{eqnarray}\label{induction hypothesis}
W_{\mathscr T_Y}(M)&\geqslant&i(M';Y_{k'}\cdot \widetilde{Y}_{k'}\cdot Z_Y;\mathbb P(E_{k'}))\nonumber\\
  &\geqslant&\mu_{M'}(Y_{k'})\mu_{M'}(\widetilde{Y}_{k'})\mu_{M'}(Z_Y)\nonumber\;\text{ (la proposition \ref{inqmult})}\\
  &\geqslant&\mu_{M'}(Y_{k'})\nonumber\\
  &=&\sum_{Y'\in\mathcal C(Y_{k'})}\mu_{M'}(Y')\;\text{ (la proposition \ref{sumofmult})}
\end{eqnarray}
d'apr\`es l'in\'egalit\'e \eqref{induction hypothesis inequality}, car l'anneau $\O_{Y_{k'},Y'}$ est un anneau local artinien r\'eduit, qui est un corps (cf. \cite[Proposition 8.9]{AM_commutative_algebra}, l'id\'eal maximal de $\O_{Y_{k'},Y'}$ est nul). Donc on obtient
\begin{eqnarray*}
& &\sum_{Y\in \mathcal C(X_1\intersect X_r)}i(Y;X_1\intersect X_r;\mathbb P(E))W_{\mathscr T_{Y}}(M)\\
&\geqslant&\sum_{Y\in\mathcal C(X_1\intersect X_r)}i(Y;X_1\intersect X_r;\mathbb P(E))\sum_{Y'\in\mathcal C(Y_{k'})}\mu_{M'}(Y')\\
&=&\sum_{Y\in\mathcal C(X_1\intersect X_r)}\sum_{Y'\in\mathcal C(Y_{k'})}i(Y';X_{1,k'}\intersect X_{r,k'};\mathbb P(E_{k'}))\mu_{M'}(Y')\\
&=&\sum_{Y'\in\mathcal C(X_{1,k'}\intersect X_{r,k'})}i(Y';X_{1,k'}\intersect X_{r,k'};\mathbb P(E_{k'}))\mu_{M'}(Y')
\end{eqnarray*}
d'apr\`es l'in\'egalit\'e \eqref{induction hypothesis}, le lemme \ref{irreducible component change after extension of fields} et la proposition \ref{basechangemultinter}.

En outre, d'apr\`es l'in\'egalit\'e \eqref{why we assume the number of k'}, on a
    \[\sum_{Y'\in\mathcal C(X_{1,k'}\intersect X_{r,k'})}\deg (Y')\leqslant\#k'.\]

    La composante $M$ admet un $k'$-point r\'egulier. Comme $k$ est parfait, d'apr\`es la proposition \ref{sumofmult}, la composante $M'$ admet un $k'$-point rationnel $P$ de multiplicit\'e $1$, qui est r\'egulier car $M_{k'}$ est de dimension pure. D'apr\`es la proposition \ref{numofproper}, on obtient qu'il existe un sous-sch\'ema $k'$-lin\'eaire ferm\'e de $\mathbb P(E_{k'})$ de dimension $n-\dim (Y)=n-\dim(Y_{k'})$ qui intersecte tout $Y'\in\bigcup\limits_{Y\in\mathcal D(M)}\mathcal C(Y_{k'})$ proprement en le point $P$ ou en les composante qui ne contiennet pas $P$. Dans ce cas-l\`a, ce sous-sch\'ema $k'$-lin\'eaire ferm\'e de $\mathbb P(E_{k'})$ intersecte $M'$ seulement au voisinage de $P$. D'apr\`es la proposition \ref{cylindre}, on peut trouver un cylindre $M^0_{k'}$ de dimension $n-\dim(Y)+\dim(M)=n-\dim(Y')+\dim(M')$ dont la direction est d\'efinie par ce sous-sch\'ema $k'$-lin\'eaire ferm\'e de $\mathbb P(E_{k'})$, tel qu'il intersecte tous les $Y'\in\bigcup\limits_{Y\in\mathcal D(M)}\mathcal C(Y_{k'})$ proprement en la composante $M'$ ou en les composantes qui ne contiennent pas $M'$. De plus, on a
\[\mu_{M'}(Y')=i( M';Y'\cdot M^0_{k'};\mathbb P(E_{k'}))\]
pour toute composante irr\'eductible $Y'\in\mathcal C(Y_{k'})$. Donc on obtient
\begin{eqnarray*}
  & &\sum_{Y'\in\mathcal C(X_{1,k'}\intersect X_{r,k'})}i(Y';X_{1,k'}\intersect X_{r,k'};\mathbb P(E_{k'}))\mu_{M'}(Y')\\
  &=&\sum_{Y'\in\mathcal C(X_{1,k'}\intersect X_{r,k'})}i(Y';X_{1,k'}\intersect X_{r,k'};\mathbb P(E_{k'}))i(M';Y'\cdot M^0_{k'};\mathbb P(E_{k'}))
\end{eqnarray*}
d'\'apr\`es l'\'egalit\'e \eqref{intersection multiplicity of Y}. Par la d\'efinition de $\mathcal Z_*$ (la d\'efinition \ref{Zs}), les composantes irr\'eductibles dans $\mathcal C(X_1\intersect X_r)\smallsetminus\mathcal D(M)$ ne contiennent pas $M$. Donc le cylindre $M^0_{k'}$ n'intersecte pas les composantes irr\'eductibles de l'intersection $X_{1,k'}\intersect X_{r,k'}$ dans $\mathcal C(X_{1,k'}\intersect X_{r,k'})\smallsetminus\{N\in\mathcal C(Y_{k'})|\;Y\in\mathcal D(M)\}$ en la composante $M'$. Alors d'apr\`es l'associativit\'e d'intersection propre (l'\'enonc\'e (ii) de la proposition \ref{comm-ass}), on a
\begin{eqnarray*}
  & &\sum_{Y'\in\mathcal C(X_{1,k'}\intersect X_{r,k'})}i(Y';X_{1,k'}\intersect X_{r,k'};\mathbb P(E_{k'}))i(M';Y'\cdot M^0_{k'};\mathbb P(E_{k'}))\\
  &=&i(M';X_{1,k'}\intersect X_{r,k'}\cdot M^0_{k'};\mathbb P(E_{k'})).
\end{eqnarray*} Alors on d\'emontre que la proposition \ref{auxillary scheme} est vraie pour le cas o\`u la profondeur maximale est $s$. C'est la fin de la d\'emonstration de la proposition \ref{auxillary scheme}.
\end{proof}
\subsection*{Une cons\'equence du th\'eor\`eme \ref{chongshu2}}
On a d\'emontr\'e le th\'eor\`eme \ref{chongshu2}. Dans la suite, on va d\'eduire une cons\'equence du th\'eor\`eme \ref{chongshu2}, qui donne une majoration globale des multiplicit\'es des sommets dans $\mathcal Z_*$. Cette majoration sera utilis\'ee dans la d\'emonstration du th\'eor\`eme principal (le th\'eor\`eme \ref{main result}).
\begin{defi}\label{Z'_s-C'_s}
  Soit $s$ un entier positif. On d\'esigne par $\mathcal C'_s$ (resp. $\mathcal Z'_s$, $\mathcal C'_*$, et $\mathcal Z'_*$) l'ensemble des \'etiquettes de $\mathcal C_s$ (resp. $\mathcal Z_s$, $\mathcal C_*$, et $\mathcal Z_*$), voir les d\'efinitions \ref{Cs} et \ref{Zs} pour les d\'efinition de $\mathcal C_s$, $\mathcal Z_s$, $\mathcal C_*$, et $\mathcal Z_*$.
\end{defi}
Avec toutes les notations ci-dessus, si tous les \'etiquettes non-vides dans $\mathscr T_Y$ ont la m\^eme dimension, pour les sommets dans $\mathcal Z_*$, on a le corollaire suivant qui est une description globale de ses multiplicit\'es dans $X_1,\ldots,X_r$.
\begin{prop}\label{globale}
  Avec les notations et conditions dans le th\'eor\`eme \ref{chongshu2}, on suppose que tous les \'el\'ements non-vides dans $\mathcal C'_*$ ont la m\^eme dimension. Alors on a
  \begin{equation*}
    \sum_{Z\in \mathcal Z_s} \left(\prod_{i=1}^r\mu_Z(X_i)\right)\deg(Z)\leqslant\prod_{i=1}^r\deg(X_i)\prod_{j=0}^{s-1}\max_{\widetilde{Y}\in \mathcal C'_j}\{\deg(\widetilde{Y})\}.
  \end{equation*}
En particulier, si $s=0$, on d\'efinit $\prod\limits_{j=0}^{s-1}\max\limits_{Y'\in \mathcal C'_j}\{\deg(Y')\}=1$.
\end{prop}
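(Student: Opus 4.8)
The plan is to apply Theorem~\ref{chongshu2} vertex-by-vertex to the elements of $\mathcal{Z}_s$ and then to invoke the B\'ezout theorem (Theorem~\ref{bezout}) twice: once \emph{inside} each tree $\mathscr{T}_Y$ to telescope the weights, and once \emph{globally} over $\mathcal{C}(X_1\intersect X_r)$. First I would note that every $Z\in\mathcal{Z}_s$ satisfies, by Definition~\ref{Zs}, exactly the hypothesis of Theorem~\ref{chongshu2}, so
\[ \prod_{i=1}^r \mu_Z(X_i) \leqslant \sum_{Y\in\mathcal{C}(X_1\intersect X_r)} W_{\mathscr{T}_Y}(Z)\, i(Y;X_1\intersect X_r;\mathbb{P}(E)). \]
Multiplying by $\deg(Z)$, summing over $Z\in\mathcal{Z}_s$, and exchanging the two finite nonnegative sums gives
\[ \sum_{Z\in\mathcal{Z}_s}\Big(\prod_{i=1}^r\mu_Z(X_i)\Big)\deg(Z) \leqslant \sum_{Y\in\mathcal{C}(X_1\intersect X_r)} i(Y;X_1\intersect X_r;\mathbb{P}(E)) \sum_{Z\in\mathcal{Z}_s} W_{\mathscr{T}_Y}(Z)\deg(Z). \]

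The heart of the argument is the inner estimate, for each fixed root $Y$,
\[ \sum_{Z\in\mathcal{Z}_s} W_{\mathscr{T}_Y}(Z)\deg(Z) \leqslant \deg(Y)\prod_{j=0}^{s-1}\max_{\widetilde{W}\in\mathcal{C}'_j}\deg(\widetilde{W}). \]
To prove it I would unwind $W_{\mathscr{T}_Y}(Z)$, which is the sum of the weights $w_{\mathscr{T}_Y}(V)$ over all occurrences $V$ of $Z$ as a vertex. Here the dimension hypothesis enters decisively. Set $n:=\dim\mathbb{P}(E)$. All roots $Y$ share the dimension $\sum_i\dim(X_i)-(r-1)n$, and by assumption all non-empty labels in $\mathcal{C}'_*$ share a common dimension $m<n$; since each edge is a proper intersection against a label, it drops dimension by exactly $n-m\geqslant 1$. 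Hence a vertex at depth $t$ has dimension $\dim(Y)-t(n-m)$, so the depth of a vertex is \emph{determined} by its dimension. In particular every occurrence of a fixed $Z\in\mathcal{Z}_s$ (in any tree) lies at depth exactly $s$, whence $\sum_{Z\in\mathcal{Z}_s}W_{\mathscr{T}_Y}(Z)\deg(Z)=\sum_V w_{\mathscr{T}_Y}(V)\deg(V)$, the sum ranging over depth-$s$ vertices $V$ of $\mathscr{T}_Y$ whose underlying integral subscheme lies in $\mathcal{Z}_s$; this is bounded above by the full sum $\Sigma_s(Y):=\sum_{V\text{ at depth }s}w_{\mathscr{T}_Y}(V)\deg(V)$.

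It then remains to bound $\Sigma_s(Y)$ by $\deg(Y)\prod_{j=0}^{s-1}\max_{\widetilde{W}\in\mathcal{C}'_j}\deg(\widetilde{W})$, which I would establish by induction on $s$. The case $s=0$ reduces to the single root, giving $\Sigma_0(Y)=\deg(Y)$. For the step, every depth-$s$ vertex $U$ has a unique non-leaf parent $V$ at depth $s-1$ with label $\widetilde{V}\in\mathcal{C}'_{s-1}$ and $w_{\mathscr{T}_Y}(U)=w_{\mathscr{T}_Y}(V)\,i(U;V\cdot\widetilde{V};\mathbb{P}(E))$; B\'ezout applied to the proper intersection $V\cdot\widetilde{V}$ gives $\sum_{U\text{ child of }V}i(U;V\cdot\widetilde{V};\mathbb{P}(E))\deg(U)=\deg(V)\deg(\widetilde{V})$. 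Summing over $V$, bounding $\deg(\widetilde{V})\leqslant\max_{\widetilde{W}\in\mathcal{C}'_{s-1}}\deg(\widetilde{W})$, and extending to all depth-$(s-1)$ vertices (adding only nonnegative leaf terms) yields $\Sigma_s(Y)\leqslant\max_{\widetilde{W}\in\mathcal{C}'_{s-1}}\deg(\widetilde{W})\cdot\Sigma_{s-1}(Y)$, closing the induction. Substituting the inner estimate and invoking B\'ezout globally, $\sum_Y i(Y;X_1\intersect X_r;\mathbb{P}(E))\deg(Y)=\prod_{i=1}^r\deg(X_i)$, produces exactly the claimed inequality. The main obstacle I anticipate is the bookkeeping around $W_{\mathscr{T}_Y}$: one must be certain the equidimensionality of the labels forces every occurrence of a given $Z\in\mathcal{Z}_s$ to the single depth $s$, for otherwise $W_{\mathscr{T}_Y}(Z)$ would collect weights from other levels and the clean telescoping through B\'ezout would break down.
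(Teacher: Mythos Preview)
Your proposal is correct and follows essentially the same approach as the paper: both arguments combine Theorem~\ref{chongshu2} with an induction on the depth $s$ driven by B\'ezout's theorem, using the equidimensionality of the labels to ensure that all occurrences of a given scheme sit at a single depth. The only difference is organizational: the paper first proves the B\'ezout telescoping bound for the full set $\mathcal C_s$ (summed with the intersection multiplicities $i(Y;\ldots)$ already built in) and applies Theorem~\ref{chongshu2} at the end to pass to $\mathcal Z_s$, whereas you apply Theorem~\ref{chongshu2} first and then carry out the B\'ezout induction tree by tree before recombining via the global B\'ezout identity; the two orderings yield the same estimate.
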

\begin{proof}
Comme les $Y\in\mathcal C(X_1\intersect X_r)$ sont de la m\^eme dimension et ses \'etiquettes sont de la m\^eme dimension, les sommets de profondeur $1$ dans les $\mathscr T_Y$ sont de la m\^eme dimension puisque $Y$ intersecte son \'etiquette proprement pour tout $Y\in\mathcal C(X_1\intersect X_r)$. Par le m\^eme argument ci-dessus, pour un entier positif $s$ fix\'e, les sommets dans $\mathcal C_s$ sont de la m\^eme dimension.

Afin de d\'emontrer cette proposition, d'abord on raisonne par r\'ecurrence sur la profondeur $s$ pour montrer l'in\'egalit\'e
\begin{eqnarray*}
& &\sum_{Z\in\mathcal C_s}\left(\sum_{Y\in\mathcal C(X_1\intersect X_r)}i(Y;X_1\intersect X_r;\mathbb P(E))W_{\mathscr T_Y}(Z)\right)\deg(Z)\\
&\leqslant&\prod_{i=1}^r\deg(X_i)\prod_{j=0}^{s-1}\max_{\widetilde{Y}\in \mathcal C'_j}\{\deg(\widetilde{Y})\}.
\end{eqnarray*}

D'apr\`es le th\'eor\`eme de B\'ezout (le th\'eor\`eme \ref{bezout}), on a
\begin{equation*}
  \sum_{Y\in\mathcal C(X_1\intersect X_r)}i(Y_i;X_1\intersect X_r;\mathbb P(E))\deg(Y)=\deg(X_1)\deg(X_2)\cdots\deg(X_r),
\end{equation*}
ce qui montre le cas de $s=0$.

On suppose que le cas de profondeur $s-1$ est d\'emontr\'e. Pour le cas de profondeur $s$, on a
\begin{eqnarray*}
  & &\prod_{i=1}^r\deg(X_i)\prod_{j=0}^{s-1}\max_{\widetilde{Y}\in \mathcal C'_j}\{\deg(\widetilde{Y})\}\\
  &\geqslant&\max_{\widetilde{Y}\in \mathcal C'_{s-1}}\{\deg(\widetilde{Y})\}\sum_{Z\in\mathcal C_{s-1}}\left(\sum_{Y\in\mathcal C(X_1\intersect X_r)}i(Y;X_1\intersect X_r;\mathbb P(E))W_{\mathscr T_Y}(Z)\right)\deg(Z)
  \end{eqnarray*}
 d'apr\`es le th\'eor\`eme de B\'ezout (le th\'eor\`eme \ref{bezout}). Pour tout sommet $Z$, on d\'esigne par $\widetilde{Z}$ l'\'etiquette de $Z$. Donc on obtient
  \begin{eqnarray*}
  & &\max_{\widetilde{Y}\in \mathcal C'_{s-1}}\{\deg(\widetilde{Y})\}\sum_{Z\in\mathcal C_{s-1}}\left(\sum_{Y\in\mathcal C(X_1\intersect X_r)}i(Y;X_1\intersect X_r;\mathbb P(E))W_{\mathscr T_Y}(Z)\right)\deg(Z)\\
  &\geqslant&\sum_{Z\in\mathcal C_{s-1}}\left(\sum_{Y\in\mathcal C(X_1\intersect X_r)}i(Y;X_1\intersect X_r;\mathbb P(E))W_{\mathscr T_Y}(Z)\right)\deg(Z)\deg(\widetilde{Z})\\
  &=&\sum_{Z\in\mathcal C_{s-1}}\left(\sum_{Y\in\mathcal C(X_1\intersect X_r)}i(Y;X_1\intersect X_r;\mathbb P(E))W_{\mathscr T_Y}(Z)\right)\cdot\\
  & &\sum_{Z'\in\mathcal C (Z\cdot\widetilde{Z})}i(Z';Z\cdot\widetilde{Z};\mathbb P(E))\deg(Z')\\
  &=&\sum_{Z'\in\mathcal C_{s}}\left(\sum_{Y\in\mathcal C(X_1\intersect X_r)}i(Y;X_1\intersect X_r;\mathbb P(E))W_{\mathscr T_Y}(Z')\right)\deg(Z'),
\end{eqnarray*}
ce qui montre le cas de profondeur $s$.

Dans la suite, il faut d\'emontrer l'in\'egalit\'e
\begin{eqnarray*}
  & &\sum_{Z\in \mathcal Z_s}\left( \prod_{i=1}^r\mu_Z(X_i)\right)\deg(Z)\\
  &\leqslant&\sum_{Z\in\mathcal C_s}\left(\sum_{Y\in\mathcal C(X_1\intersect X_r)}i(Y;X_1\intersect X_r;\mathbb P(E))W_{\mathscr T_Y}(Z)\right)\deg(Z).
\end{eqnarray*}
Pour un $Z\in \mathcal Z_s$ fix\'e, d'apr\`es le th\'eor\`eme \ref{chongshu2}, on obtient
\begin{equation*}
  \sum_{Y\in\mathcal C(X_1\intersect X_r)}i(Y;X_1\intersect X_r;\mathbb P(E))W_{\mathscr T_Y}(Z)\geqslant \mu_{Z}(X_1)\cdots\mu_Z(X_r).
  \end{equation*}
Par la d\'efinition \ref{Zs}, l'ensemble $\mathcal Z_s$ est un sous-ensemble de $\mathcal C_s$ pour tout $s\geqslant0$. Donc on obtient le r\'esultat.
\end{proof}

\section{Estimation de multiplicit\'es dans une hypersurface}
Le r\'esultat suivant est une majoration d'un comptage de multiplicit\'es dans une hypersurface projective r\'eduite sur le corps fini $\f_q$. Cette majoration peut \^etre consid\'er\'ee comme une description de la complexit\'e du lieu singulier de cette hypersurface projective r\'eduite.
\begin{theo}\label{main result}
  Soient $X\hookrightarrow \mathbb{P}^n_{\f_q}$ une hypersurface projective r\'eduite de degr\'e $\delta$, o\`u $\dim (X^{\mathrm{sing}})=s$. Alors on a:
  \begin{eqnarray*}
    \sum\limits_{\xi\in X(\f_q)}\mu_\xi(X)(\mu_\xi(X)-1)^{n-s-1}&\leqslant&\delta(\delta-1)^{n-s-1}(q^{s}+q^{s-1}+\cdots+1)+\\
    & &\delta(\delta-1)^{n-s}(q^{s-1}+q^{s-2}+\cdots+1)+\cdots\\
    & &+\delta(\delta-1)^{n-1},
  \end{eqnarray*}
o\`u $\mu_\xi(X)$ est la multiplicit\'e de $\xi$ dans $X$ (voir \S \ref{mult of a sub-scheme} pour la d\'efinition).
\end{theo}
Avant de la d\'emontration du th\'eor\`eme \ref{main result}, on a besoin d'introduire certaines propri\'et\'es sp\'eciales autour de la multiplicit\'e dans une section par hypersurface, et introduire une m\'ethode de construire les arbres d'intersection utiles pour ce probl\`eme de comptage de multiplicit\'es.
\subsection{Multiplicit\'e dans une section par hypersurface}
Soient $k$ un corps, et $f\in k[T_0,\ldots,T_n]$ un polyn\^ome homog\`ene non-nul de degr\'e $\delta$. On dit que le sch\'ema
\[X=\proj\left(k[T_0,\ldots,T_n]/(f)\right)\]
 est une \textit{hypersurface projective} (ou \textit{hypersurface} pour simplifier) de $\mathbb P^n_k$ d\'efinie par le polyn\^ome $f$. On peut d\'emontrer que $X$ est un sous-sch\'ema ferm\'e de degr\'e $\delta$ de $\mathbb P^n_k$ (cf. \cite[Proposition 7.6, Chap. I]{GTM52}).

On va introduire certaines propri\'et\'es sp\'eciales autour de la multiplicit\'e d'un point dans une hypersurface projective.
\begin{prop}[\cite{Kollar2007}, Example 2.70 (2)]\label{local hilbert of hypersurface}
Soient $X$ une hypersurface de $\mathbb P^n_k$ d\'efinie par un polyn\^ome homog\`ene $f$ non-nul, $\xi\in X(\overline k)$, et $\sm_\xi$ l'id\'eal maximal d'anneau local $\O_{\mathbb{P}^n_k,\xi}$. Soit $H_\xi(s)$ la fonction de Hilbert-Samuel locale de $X$ en le point $\xi$ (voir \S \ref{multiplicity of local ring} pour la d\'efinition). Si l'image de $f$ dans $\O_{\mathbb P^n_k,\xi}$ appara\^it \`a l'ensemble $\sm_\xi^r\smallsetminus \sm_\xi^{r+1}$. Alors on a
\begin{equation*}
  H_\xi(s)={n+s-1\choose s}-{n+s-r-1\choose s-r}.
\end{equation*}
En particulier, on a $\mu_\xi(X)=r$.
\end{prop}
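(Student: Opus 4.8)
The plan is to identify the local Hilbert--Samuel function $H_\xi$ with the Hilbert function of the associated graded ring of $\O_{X,\xi}$ and then to compute that graded ring explicitly. Write $R=\O_{\P^n_k,\xi}$, which is a regular local ring of dimension $n$ with maximal ideal $\sm_\xi$ and residue field $\kappa$, and set $A=\O_{X,\xi}=R/(f)$, a local ring of dimension $n-1$ with maximal ideal $\overline{\sm}_\xi=\sm_\xi/(f)$. By the definition recalled in \S\ref{multiplicity of local ring}, $H_\xi(s)=\ell_\kappa\big(\overline{\sm}_\xi^{\,s}/\overline{\sm}_\xi^{\,s+1}\big)=\dim_\kappa\big[\operatorname{gr}_{\overline{\sm}_\xi}(A)\big]_s$, so it suffices to determine $\operatorname{gr}_{\overline{\sm}_\xi}(A)$ degree by degree. (If convenient one may first pass to the completion $\widehat R\cong\kappa[[x_1,\dots,x_n]]$, since the $\sm_\xi$-adic completion alters neither the graded pieces $\sm_\xi^s/\sm_\xi^{s+1}$ nor the multiplicity.)

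Since $R$ is regular, \S\ref{multiplicity of local ring} gives $S:=\operatorname{gr}_{\sm_\xi}(R)\cong\bigoplus_{i\geqslant0}\sym^i_\kappa(\sm_\xi/\sm_\xi^2)\cong\kappa[x_1,\dots,x_n]$, a polynomial ring whose degree-$s$ part has dimension $\dim_\kappa[S]_s=\binom{n+s-1}{s}$. Because $f\in\sm_\xi^r\smallsetminus\sm_\xi^{r+1}$, its initial form $\operatorname{in}(f)=f_r$ is a nonzero homogeneous element of $S$ of degree $r$.

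The key step is the identification $\operatorname{gr}_{\overline{\sm}_\xi}(A)\cong S/(f_r)$. By the standard description of the associated graded of a quotient there is a canonical isomorphism $\operatorname{gr}_{\overline{\sm}_\xi}(A)\cong S/\operatorname{in}\big((f)\big)$, where $\operatorname{in}((f))$ is the ideal of $S$ generated by the initial forms of all elements of $(f)$. Here I use that $X$ is a hypersurface, i.e.\ that $(f)$ is principal and $S$ is an integral domain: any element of $(f)$ is $hf$, and writing $\operatorname{in}(h)$ for the leading form of $h$, the product $\operatorname{in}(h)\,f_r$ is nonzero in the domain $S$, whence $\operatorname{in}(hf)=\operatorname{in}(h)\,f_r\in(f_r)$; taking $h=1$ shows $f_r\in\operatorname{in}((f))$, so $\operatorname{in}((f))=(f_r)$. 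Thus $\big[\operatorname{gr}_{\overline{\sm}_\xi}(A)\big]_s=[S/(f_r)]_s$, and multiplication by $f_r$ yields the exact sequence $0\to[S]_{s-r}\xrightarrow{\,\cdot f_r\,}[S]_s\to[S/(f_r)]_s\to0$, again injective because $S$ is a domain. Taking $\kappa$-dimensions gives $H_\xi(s)=\binom{n+s-1}{s}-\binom{n+s-r-1}{s-r}$, with the convention that the second term vanishes for $s<r$.

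Finally, to read off $\mu_\xi(X)$ I would expand this expression as a polynomial in $s$ for $s\geqslant r$. Writing $\binom{n+s-1}{s}=\tfrac{1}{(n-1)!}\prod_{j=1}^{n-1}(s+j)$ and $\binom{n+s-r-1}{s-r}=\tfrac{1}{(n-1)!}\prod_{j=1}^{n-1}(s-r+j)$, the two leading terms $s^{n-1}/(n-1)!$ cancel and the difference is a polynomial of degree $n-2$ with leading coefficient $(n-1)r/(n-1)!=r/(n-2)!$. Comparing with the normalization of \S\ref{multiplicity of local ring}, where for $\dim A=d=n-1$ one has $P_\xi(s)=e_{\sm_\xi,A}\,s^{d-1}/(d-1)!+o(s^{d-1})$, we obtain $\mu_\xi(X)=e_{\sm_\xi,A}=r$. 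I expect the only delicate point to be the identification $\operatorname{in}((f))=(f_r)$ together with the general isomorphism $\operatorname{gr}_{\overline{\sm}_\xi}(A)\cong S/\operatorname{in}((f))$; once these are in place everything reduces to the Hilbert function of a graded principal quotient of a polynomial ring, which is an elementary computation.
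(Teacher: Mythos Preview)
Your argument is correct and is the standard route to this computation. Note, however, that the paper does not supply its own proof of this proposition: it is stated with a citation to \cite{Kollar2007}, Example~2.70~(2), and used as a black box. What you have written is essentially the argument one finds in such references, so there is nothing substantive to compare. Your identification $\operatorname{in}((f))=(f_r)$ via the integrality of $S=\operatorname{gr}_{\sm_\xi}(R)$ is exactly the point that makes the hypersurface case clean, and the exact sequence $0\to[S]_{s-r}\xrightarrow{\cdot f_r}[S]_s\to[S/(f_r)]_s\to0$ together with the leading-term expansion gives the stated formula and multiplicity without difficulty.
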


Soit $I=(i_0,\ldots,i_n)\in\mathbb N^{n+1}$ un indice, on d\'efinit $|I|=i_0+\cdots+i_n$. Soit $g(T_0,\ldots,T_n)$ un polyn\^ome homog\`ene non-nul de degr\'e $\delta$, alors on peut d\'evelopper le polyn\^ome $g(T_0+S_0,T_1+S_1,\ldots,T_n+S_n)\in k[T_0,T_1,\ldots,T_n,S_0,S_1,\ldots,S_n]$ comme
\begin{eqnarray*}
  & &g(T_0+S_0,\ldots,T_n+S_n)\\
  &=&g(T_0,\ldots,T_n)+\sum_{\alpha=1}^\delta\sum_{\begin{subarray}{c}I=(i_0,\ldots,i_n)\in\mathbb N^{n+1}\\|I|=\alpha\end{subarray}}g^{I}(T_0,\ldots,T_n)S_0^{i_0}\cdots S_n^{i_n},
\end{eqnarray*}
o\`u $g^{I}(T_0,\ldots,T_n)$ est un polyn\^ome homog\`ene de degr\'e $\delta-|I|$ ou nul. On d\'esigne par $\mathcal D^\alpha(g)$ l'ensemble des polyn\^omes $g^I(T_0,\ldots,T_n)$ d\'efinis ci-dessus, o\`u $|I|=\alpha\geqslant1$.

 Pour un entier $1\leqslant\alpha\leqslant\delta$, on d\'efinit $\mathcal T^{\alpha}(g)$ comme l'espace $k$-vectoriel engendr\'e par les \'el\'ements dans $\mathcal D^\alpha(g)$. Pour tout $g\in\mathcal T^{\alpha}(g)$ non-nul, $g$ d\'efinit une hypersurface projective de degr\'e $\delta-\alpha$ de $\mathbb P^n_k$.

 De plus, on d\'efinit $\mathcal D^0(g)=\{g\}$ et $\mathcal T^0(g)=k\cdot g$.
\begin{rema}
Les \'el\'ements dans $\mathcal D^1(g)$ sont les
\[\frac{\partial g}{\partial T_0},\frac{\partial g}{\partial T_1},\ldots,\frac{\partial g}{\partial T_n},\]
qui sont des polyn\^omes homog\`enes de degr\'e $\delta-1$ ou nuls. Si $\car(k)=0$ ou $\car(k)>\delta$, les \'el\'ements dans $\mathcal D^\alpha(g)$ ont la forme de
  \[\frac{1}{i_0!\cdots i_n!}\cdot\frac{\partial^{i_0+\cdots+i_n}g(T_0,\ldots,T_n)}{\partial T_0^{i_0}\cdots\partial T_n^{i_n}},\]
   o\`u $(i_0,\ldots,i_n)\in\mathbb N^{n+1}$ est une indice avec $i_0+\cdots+i_n=\alpha$. De plus, l'espace $k$-vectoriel $\mathcal T^{\alpha}(g)$ est l'espace des d\'eriv\'ees directionnelles de l'ordre $\alpha$ de $g(T_0,\ldots,T_n)$.
\end{rema}
Avec toutes les notations ci-dessus, on a une cons\'equence directe de la proposition \ref{local hilbert of hypersurface} suivant:
\begin{coro}\label{point}
  Soient $X\hookrightarrow\mathbb{P}^n_k$ l'hypersurface projective d\'efinie par un polyn\^ome homog\`ene $f\neq0$ de degr\'e $\delta$, $\xi\in X(\overline{k})$, et $\alpha$ un entier tel que $0\leqslant\alpha\leqslant \mu_\xi(X)-1$. Alors pour tout $g\in \mathcal T^\alpha(f)$ non-nul, le point $\xi$ est contenu dans l'hypersurface d\'efinie par $g$. Il existe un $g'\in \mathcal T^{\mu_\xi(X)}(f)$ non-nul, tel que $\xi$ n'est pas contenu dans l'hypersurface d\'efinie par $g'$.
\end{coro}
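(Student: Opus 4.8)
The plan is to reduce the geometric statement to the vanishing of the Taylor coefficients $f^I$ evaluated at a system of homogeneous coordinates of $\xi$, and then to identify $\mu_\xi(X)$ with the order of vanishing of the family $(f^I)$. Fix coordinates $a=(a_0,\dots,a_n)\in\overline{k}^{\,n+1}\smallsetminus\{0\}$ representing $\xi$. Specializing at $T=a$ the expansion that defines the $f^I$ gives the identity
\[ f(a_0+S_0,\dots,a_n+S_n)=\sum_{\alpha\geqslant0}\ \sum_{|I|=\alpha}f^I(a)\,S_0^{i_0}\cdots S_n^{i_n}. \]
Since a hypersurface defined by $g$ contains $\xi$ if and only if $g(a)=0$, and since every $g\in\mathcal T^\alpha(f)$ is a $k$-linear combination of the $f^I$ with $|I|=\alpha$, the two assertions of the corollary translate respectively into: (1) $f^I(a)=0$ for all $I$ with $|I|\leqslant\mu_\xi(X)-1$; and (2) there is an index $I_0$ with $|I_0|=\mu_\xi(X)$ and $f^{I_0}(a)\neq0$, in which case $g'=f^{I_0}\in\mathcal D^{\mu_\xi(X)}(f)\subseteq\mathcal T^{\mu_\xi(X)}(f)$ is the desired nonzero element with $\xi\notin V(g')$.

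Both assertions follow from the single claim that $r_0:=\min\{\,|I|:f^I(a)\neq0\,\}$ equals $\mu_\xi(X)$, i.e.\ that $r_0$ is exactly the order of vanishing at $S=0$ of the polynomial $S\mapsto f(a+S)$. To obtain one inequality I would pass to an affine chart $T_j\neq0$ with $a_j\neq0$: there $X$ is cut out by the dehomogenization $F$ of $f$, and by Proposition~\ref{local hilbert of hypersurface} the multiplicity $\mu_\xi(X)$ equals the order of $F$ at the corresponding point, which is $\min\{\,|I|:i_j=0,\ f^I(a)\neq0\,\}$. As this minimum runs over a subset of all multi-indices, it is at least $r_0$, whence $r_0\leqslant\mu_\xi(X)$.

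The reverse inequality $r_0\geqslant\mu_\xi(X)$ is the heart of the matter, and I expect it to be the main obstacle: one must rule out that a ``mixed'' coefficient $f^I(a)$ with $i_j\neq0$ vanishes to strictly smaller order than the coefficients supported on $i_j=0$ that Proposition~\ref{local hilbert of hypersurface} controls. The cleanest, characteristic-free way I would argue it is to identify $\mu_\xi(X)$ with the multiplicity at $a$ of the affine cone $\widehat C(X)=\spec\big(k[T_0,\dots,T_n]/(f)\big)$: the projection $\widehat C(X)\smallsetminus\{0\}\to X$ is a $\mathbb{G}_m$-torsor which is Zariski-locally a product with $\mathbb{G}_m$, so by the multiplicativity of multiplicities for a product (the same computation used in the proof of Proposition~\ref{inqmult}) one gets $\mu_\xi(X)=\mathrm{mult}_a\widehat C(X)$; and, by the same local computation as in Proposition~\ref{local hilbert of hypersurface} applied in $\mathbb{A}^{n+1}$, this last multiplicity is the order of $f$ at $a$, that is $r_0$.

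Alternatively, in characteristic $0$ (or when $\car(k)>\delta$) one can argue purely algebraically: combining the combinatorial identity $\partial_{T_l}f^I=(i_l+1)f^{I+e_l}$ with Euler's relation for the homogeneous polynomials $f^I$ yields $\sum_l(i_l+1)a_l\,f^{I+e_l}(a)=(\delta-|I|)f^I(a)$; after normalizing $a=(0,\dots,0,1)$ this collapses to $(i_n+1)f^{I+e_n}(a)=(\delta-|I|)f^I(a)$, expressing every $f^{I'+m e_n}(a)$ with $i'_n=0$ as a scalar multiple of $f^{I'}(a)$, which vanishes as soon as $|I'|+m<\mu_\xi(X)$. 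The recursion stalls exactly when $p\mid(i_n+1)$, which is precisely why I would favour the cone argument; either route gives $r_0=\mu_\xi(X)$ and hence both (1) and (2).
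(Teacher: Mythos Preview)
Your argument is correct and reaches the same conclusion as the paper by the same route: via Proposition~\ref{local hilbert of hypersurface}, $\mu_\xi(X)$ is the order of the local equation of $X$ in $\mathfrak m_\xi$, and the corollary amounts to identifying this order with $r_0=\min\{\,|I|:f^I(a)\neq0\,\}$. The paper's proof simply asserts this identification (``image of $f$ in $\mathfrak m_\xi^{r}$ means $f^I(a)=0$ for all $|I|<r$, and not in $\mathfrak m_\xi^{r+1}$ means some $f^I(a)\neq0$ with $|I|=r$'') without further comment. You are more scrupulous: you observe that the order computed in an affine chart $T_j\neq0$ only directly controls the indices with $i_j=0$, and you supply the missing link by passing to the affine cone $\widehat C(X)\subset\mathbb A^{n+1}$, where the multiplicity at $a$ is exactly $r_0$ by the $(n{+}1)$-variable version of Proposition~\ref{local hilbert of hypersurface}, and where the local product decomposition of the $\mathbb G_m$-torsor $\widehat C(X)\smallsetminus\{0\}\to X$ together with the multiplicativity of multiplicities (as in the proof of Proposition~\ref{inqmult}) yields $\mu_\xi(X)=r_0$. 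This cone argument is a genuine, characteristic-free addition to the paper's terse justification; your instinct to prefer it over the Euler recursion, which indeed can stall when $p\mid(i_j+1)$, is well placed.
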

\begin{proof}
	Soit $\xi=[a_0:\cdots:a_n]$. D'apr\`es la proposition \ref{local hilbert of hypersurface}, l'image de $f$ dans l'anneau local $\O_{\mathbb P^n_k,\xi}$ est dans l'ensemble $\sm_{\xi}^{\mu_\xi(X)}\smallsetminus\sm_\xi^{\mu_\xi(X)+1}$, qui signifie que cette image est dans $\sm_{\xi}^{\mu_\xi(X)}$ mais n'est pas dans $\sm_{\xi}^{\mu_\xi(X)+1}$. L'image \'etant dans $\sm_{\xi}^{\mu_\xi(X)}$ signifie que pour tout polyn\^ome $f^{I}(T_0,\ldots,T_n)$ d\'efini ci-dessus avec $0\leqslant|I|\leqslant\mu_\xi(X)-1$, on a $f^{I}(a_0,\ldots,a_n)=0$. L'image n'\'etant pas dans $\xi\not\in\sm_{\xi}^{\mu_\xi(X)+1}$ signifie qu'il existe un polyn\^ome $f^{I}(T_0,\ldots,T_n)$ avec $|I|=\mu_\xi(X)$ tel que $f^{I}(a_0,\ldots,a_n)\neq0$. Alors on a l'assertion.
\end{proof}
Une cons\'equence directe du corollaire \ref{point} est ci-dessous.
\begin{coro}\label{mult}
  Soient $X\hookrightarrow\mathbb{P}^n_k$ l'hypersurface projective d\'efinie par un polyn\^ome homog\`ene $f$ de degr\'e $\delta$, et $\eta\in X$ un point sch\'ematique. Pour un entier $0\leqslant\alpha\leqslant\delta$, soit $X'$ l'hypersurface de $\mathbb P^n_k$ d\'efinie par un \'el\'ement non-nul $g\in\mathcal T^{\alpha}(f)$, o\`u $\alpha<\mu_\eta(X)$. Alors la multiplicit\'e $\mu_\eta(X')$ est au moins $\mu_\eta(X)-\alpha$. De plus, il existe au moins un \'el\'ement dans $\mathcal T^{\alpha}(f)$ qui d\'efinit une hypersurface $X''$ de $\mathbb P^n_k$, telle que la multiplicit\'e $\mu_\eta(X'')$ soit \'egale \`a $\mu_\xi(X)-\alpha$.
\end{coro}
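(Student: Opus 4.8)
La strat\'egie est de ramener l'\'enonc\'e, qui porte sur un point sch\'ematique $\eta$, \`a une assertion sur les coefficients de Taylor $f^I$ en un point g\'eom\'etrique g\'en\'erique de $\overline{\{\eta\}}$, puis d'appliquer le corollaire \ref{point}. L'ingr\'edient alg\'ebrique central est une identit\'e combinatoire reliant les coefficients de Taylor it\'er\'es. En d\'eveloppant $f(T+S+U)$ de deux mani\`eres, d'une part directement par rapport \`a $S+U$, d'autre part en deux temps (d'abord par rapport \`a $U$, puis par rapport \`a $S$), et en identifiant les coefficients de $S^KU^J$, on obtient
\[(f^J)^K=\binom{J+K}{J}f^{J+K},\]
o\`u $\binom{J+K}{J}=\prod_t\binom{j_t+k_t}{j_t}$ d\'esigne le coefficient multinomial. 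Cette \'egalit\'e est purement combinatoire et vaut en toute caract\'eristique. Par lin\'earit\'e, pour tout $g=\sum_{|J|=\alpha}c_Jf^J\in\mathcal T^\alpha(f)$ on a $g^K=\sum_{|J|=\alpha}c_J\binom{J+K}{J}f^{J+K}$, de sorte que les coefficients de Taylor d'ordre $|K|$ de $g$ se d\'ecrivent \`a l'aide des $f^I$ avec $|I|=|K|+\alpha$.

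Posons $m=\mu_\eta(X)$ et $Y=\overline{\{\eta\}}$, de sorte que $\mu_\eta(X)=\mu_Y(X)$. D'apr\`es le corollaire \ref{sub}, presque tout point ferm\'e de $Y$ a multiplicit\'e $m$ dans $X$; quitte \`a \'etendre $k$ par une extension galoisienne finie (les multiplicit\'es \'etant pr\'eserv\'ees d'apr\`es la proposition \ref{mult of point under base change}, ce qui est loisible lorsque $k$ est parfait, en particulier pour $k=\f_q$), on fixe un tel point comme point r\'egulier rationnel $\xi$. Comme la formation des $f^I$ et de $\mathcal T^\alpha(f)$ commute au changement de base, il suffit de prouver les deux assertions en $\xi$. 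Pour la minoration, soit $g\in\mathcal T^\alpha(f)$ non-nul avec $\alpha<m$, et soit $K$ un multi-indice v\'erifiant $|K|<m-\alpha$. Alors $|J+K|<m$ pour tout $J$ avec $|J|=\alpha$, donc $f^{J+K}(\xi)=0$ d'apr\`es le corollaire \ref{point}, d'o\`u $g^K(\xi)=0$. D'apr\`es la proposition \ref{local hilbert of hypersurface}, ceci signifie $\mu_\xi(X')\geqslant m-\alpha$; en particulier, en prenant $K=0$, le point $\xi$ appartient \`a $X'$. L'ensemble de tels $\xi$ \'etant dense dans $Y$ et $X'$ \'etant ferm\'e, on en d\'eduit $Y\subseteq X'$, puis $\mu_\eta(X')=\mu_Y(X')\geqslant m-\alpha$ par le corollaire \ref{sub} appliqu\'e \`a $X'$. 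C'est la premi\`ere assertion.

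Pour la seconde assertion, il suffit d'exhiber $g\in\mathcal T^\alpha(f)$ poss\'edant un coefficient $g^K$ avec $|K|=m-\alpha$ qui ne s'annule pas en $\xi$: la minoration ci-dessus forcera alors $\mu_\xi(X'')=m-\alpha$, d'o\`u $\mu_\eta(X'')=m-\alpha$ par le m\^eme argument de densit\'e. D'apr\`es le corollaire \ref{point}, il existe $I_0$ avec $|I_0|=m$ et $f^{I_0}(\xi)\neq0$; il reste \`a trouver une d\'ecomposition $I_0=J+K$ avec $|J|=\alpha$ et $\binom{I_0}{J}\neq0$ dans $k$, car en posant $g=f^J$ on obtient $g^K=\binom{I_0}{J}f^{I_0}$, non nul en $\xi$. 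Lorsque $\car(k)=0$ ou $\car(k)>\delta$, tous les coefficients multinomiaux $\binom{I_0}{J}$ sont inversibles et une telle d\'ecomposition existe trivialement, ce qui conclut. Le point d\'elicat, et l'obstacle principal de la d\'emonstration, est le cas de petite caract\'eristique: le th\'eor\`eme de Lucas peut rendre $\binom{I_0}{J}$ nul modulo $p$ pour tout $J$ de taille $\alpha$ sous-jacent \`a un mon\^ome donn\'e du terme dominant de $f$ en $\xi$. Il faut alors exploiter la libert\'e sur le choix du mon\^ome $I_0$ parmi ceux du terme dominant, voire sur les combinaisons lin\'eaires $g\in\mathcal T^\alpha(f)$, pour garantir qu'un coefficient d'ordre $m-\alpha$ survit.
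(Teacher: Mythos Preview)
Your argument for the first assertion is correct and follows the paper's route --- reduce to a generic geometric point $\xi$ of $\overline{\{\eta\}}$ via the corollaire~\ref{sub}, then read off multiplicities from Taylor coefficients --- but you have made explicit the key combinatorial ingredient $(f^J)^K=\binom{J+K}{J}f^{J+K}$ that the paper's two-line proof leaves entirely implicit. One small point: to pass back from $\mu_\xi(X')\geqslant m-\alpha$ to $\mu_\eta(X')\geqslant m-\alpha$, you should choose $\xi$ in the intersection of the dense locus where $\mu_\xi(X)=\mu_Y(X)$ \emph{and} the dense locus where $\mu_\xi(X')=\mu_Y(X')$ (both furnished by the corollaire~\ref{sub}); your wording ``par le corollaire~\ref{sub} appliqu\'e \`a $X'$'' is correct but the inequality $\mu_Y(X')\leqslant\mu_\xi(X')$ alone goes the wrong way.

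For the second assertion, the difficulty you flag in small characteristic is not a mere obstacle to be overcome: the assertion is \emph{false} as stated. Take $k=\f_2$, $f=T_0^2T_1+T_2^3\in\f_2[T_0,T_1,T_2]$, and $\eta=\xi=[0:1:0]$. In the chart $T_1=1$ the local equation is $t_0^2+t_2^3$, so $\mu_\xi(X)=2$. One computes $f^{(1,0,0)}=0$, $f^{(0,1,0)}=T_0^2$, $f^{(0,0,1)}=T_2^2$, hence $\mathcal T^1(f)=\f_2\cdot T_0^2\oplus\f_2\cdot T_2^2$. Every non-zero $g=aT_0^2+bT_2^2$ has local equation $at_0^2+bt_2^2$ at $\xi$, whence $\mu_\xi(V(g))=2\neq 1=\mu_\xi(X)-1$. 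Thus no $g\in\mathcal T^1(f)$ achieves equality, and your proposed rescue via ``libert\'e sur le choix du mon\^ome $I_0$'' or linear combinations cannot succeed. The paper's proof does not address this either; it simply asserts ``on a l'assertion'' after reducing to a rational point. Fortunately, only the first assertion (with $\alpha=1$) is invoked in the proof of the th\'eor\`eme~\ref{main result}, so the main results of the paper are unaffected.
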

\begin{proof}
  Soient $Z=\{\eta\}$, et $\xi\in Z^{\mathrm{reg}}(\overline k)$. D'apr\`es le corollaire \ref{sub}, on a $\mu_\xi(X)=\mu_\eta(X)$. Comme $Z^{\mathrm{reg}}(\overline k)$ est dense dans $Z$ (cf. \cite[Corollary 8.16, Chap. II]{GTM52}), on a l'assertion.
\end{proof}

\begin{rema}
D'apr\`es le corollaire \ref{point}, si $X\hookrightarrow\mathbb P^n_k$ est une hypersurface d\'efinie par un polyn\^ome homog\`ene non-nul de degr\'e $\delta$, la multiplicit\'e du point ferm\'e dans $X$ est au plus $\delta$.
\end{rema}
\begin{defi}\label{derivative hypersurface}
On dit que l'hypersurface projective d\'efinie par $g\in \mathcal T^\alpha(f)$ est une \textit{hypersurface d\'eriv\'ee} d'ordre $\alpha$ de l'hypersurface d\'efinie par $f$.
\end{defi}
\subsection{Construction des arbres d'intersection \`a partir d'une hypersurface}\label{construction of intersection trees}
Afin de rechercher le probl\`eme du comptage des multiplicit\'es dans une hypersurface, il faut construire quelques arbres d'intersection \`a partir de cette hypersurface. On peut \'etudier la multiplicit\'e d'un point rationnel par la majoration des poids des sommets dans les arbres d'intersection construits. Dans cette partie, soient $k$ un corps, $X$ un $k$-sch\'ema, et $k'/k$ une extension de corps, on d\'esigne par $X_{k'}$ le $k'$-sch\'ema $X\times_{\spec k}\spec k'$ pour simplifier.

D'abord, on introduit le lemme suivant, qui sera utilis\'e dans la construction des racines de ces arbres d'intersection.
\begin{lemm}\label{number of intersection}
  Soient $k$ un corps, et $g\in k[T_0,\ldots,T_n]$ un polyn\^ome homog\`ene non-nul. On d\'esigne par $V(g)$ l'hypersurface projective de $\mathbb P^n_k$ d\'efinie par $g$. Soit $f\neq0$ un polyn\^ome homog\`ene de degr\'e $\delta$. Si la dimension du lieu singulier de $V(f)$ est $s$, o\`u $0\leqslant s\leqslant n-2$. Alors il existe une extension finie $k'/k$ et une famille de $g_1,\ldots,g_{n-s-1}\in \mathcal T^1(f)\otimes_kk'$, telle que
  \[\dim\left(V(f)_{k'}\cap V(g_1)\cap \cdots \cap V(g_{n-s-1})\right)=s.\]
  Autrement dit, le sch\'ema $V(f)_{k'}\cap V(g_1)\cap \cdots \cap V(g_{n-s-1})$ est une intersection compl\`ete.
\end{lemm}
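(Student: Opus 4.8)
The plan is to construct $g_1,\ldots,g_{n-s-1}$ one at a time by a dimension-reduction induction, keeping at each stage a scheme of the expected complete-intersection dimension. Since the conclusion is purely a statement about dimension, which is insensitive to base field extension (cf. \cite[Proposition 3.2.7]{LiuQing}), I will work over $\overline{k}$ and only descend to a finite extension at the very end. Recall that $\mathcal T^1(f)$ is spanned by the partials $\partial f/\partial T_0,\ldots,\partial f/\partial T_n$, so, writing $\mathcal T^1(f)_{\overline k}$ for $\mathcal T^1(f)\otimes_k\overline k$, a nonzero element of $\mathcal T^1(f)_{\overline k}$ is a linear combination $\sum_{i=0}^n c_i\,\partial f/\partial T_i$ with $c_i\in\overline k$ and cuts out a hypersurface of $\mathbb P^n_{\overline k}$ of degree $\delta-1$. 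I will produce $g_1,\ldots,g_{n-s-1}\in\mathcal T^1(f)_{\overline k}$ such that
\[
W_j:=V(f)_{\overline k}\cap V(g_1)\cap\cdots\cap V(g_j)
\]
is of pure dimension $n-1-j$ for every $0\le j\le n-s-1$, starting from the hypersurface $W_0=V(f)_{\overline k}$, which is pure of dimension $n-1$.

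The crux is to locate the base locus of the linear system $\mathcal T^1(f)$ inside $V(f)$. Let $B\subseteq\mathbb P^n_{\overline k}$ be the common zero locus of all the partials $\partial f/\partial T_i$. By the Jacobian criterion (cf. \cite[Theorem 4.2.19]{LiuQing}), a geometric point of $V(f)_{\overline k}$ is regular precisely when some partial derivative is nonzero there; hence $B\cap V(f)_{\overline k}=V(f)_{\overline k}^{\mathrm{sing}}$, which is of dimension $s$ by hypothesis. (When $\car(k)\nmid\delta$ the Euler relation $\delta f=\sum_i T_i\,\partial f/\partial T_i$ shows directly that $B\subseteq V(f)$; note that the standing hypothesis $s\le n-2$ excludes the degenerate non-reduced situations in which $B$ would contain a component of $V(f)$.) Consequently any irreducible closed subset $C\subseteq V(f)_{\overline k}$ with $\dim C>s$ satisfies $C\not\subseteq B$, i.e.\ at least one partial derivative does not vanish identically on $C$.

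For the inductive step, suppose $W_{j-1}$ is pure of dimension $n-j$ with $1\le j\le n-s-1$; then every irreducible component $C$ of $W_{j-1}$ has $\dim C=n-j\ge s+1>s$. By the previous paragraph not all partials vanish on $C$, so $L_C:=\{\,g\in\mathcal T^1(f)_{\overline k}:g|_C=0\,\}$ is a \emph{proper} subspace of $\mathcal T^1(f)_{\overline k}$. As $W_{j-1}$ has only finitely many components and $\overline k$ is infinite, a finite union of proper subspaces cannot exhaust $\mathcal T^1(f)_{\overline k}$, so I may choose $g_j\in\mathcal T^1(f)_{\overline k}\smallsetminus\bigcup_C L_C$. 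Then $V(g_j)$ contains no component of $W_{j-1}$, and by Krull's principal ideal theorem $W_j=W_{j-1}\cap V(g_j)$ is pure of dimension $n-j-1$ (and nonempty, being the intersection of a positive-dimensional projective variety with a hypersurface). This completes the induction.

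After $n-s-1$ steps, $W_{n-s-1}$ is pure of dimension $s$, and since it is cut out of $\mathbb P^n_{\overline k}$ by the $n-s$ equations $f,g_1,\ldots,g_{n-s-1}$, it is a complete intersection. The finitely many coefficients occurring in $g_1,\ldots,g_{n-s-1}$ generate a finite extension $k'/k$, so each $g_j$ lies in $\mathcal T^1(f)\otimes_k k'$; moreover
\[
\dim\!\big(V(f)_{k'}\cap V(g_1)\cap\cdots\cap V(g_{n-s-1})\big)=\dim W_{n-s-1}=s
\]
because dimension is unchanged under the extension $\overline k/k'$ (cf.\ \cite[Proposition 3.2.7]{LiuQing}), which is exactly the assertion. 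The main obstacle is the crux of the second paragraph: the inductive descent succeeds only because components of dimension exceeding $s$ are forced off the base locus of the first partials, and this is precisely the content of the Jacobian criterion combined with the dimension hypothesis on $V(f)^{\mathrm{sing}}$; everything else is a routine dimension count.
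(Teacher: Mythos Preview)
Your proof is correct and follows essentially the same approach as the paper: both pass to the algebraic closure, use the Jacobian criterion to identify the base locus of $\mathcal T^1(f)$ on $V(f)$ with the singular locus, and then inductively choose each $g_j$ outside the finite union of proper subspaces $L_C$ determined by the irreducible components of the current intersection. Your write-up is in fact somewhat cleaner, making explicit the appeal to Krull's principal ideal theorem for the dimension drop and the non-emptiness at each stage.
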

\begin{proof}
  Comme $V(f)$ prend des points singuliers, le degr\'e de $f$ est plus grand ou \'egal \`a $2$. D'abord, on suppose que $k'$ est une cl\^oture alg\'ebrique du corps $k$, alors le cardinal de $k'$ est infini. Si on d\'emontre l'assertion pour tel corps $k'$, il existe une extension finie du corps $k$ qui satisfait le besoin aussi. Dans le reste de la d\'emonstration, tous les sch\'emas que l'on consid\`ere soient d\'efinis sur cette cl\^oture alg\'ebrique du corps $k$.

  D'apr\`es le crit\`ere jacobien (cf. \cite[Theorem 4.2.19]{LiuQing}), on a
  \[\dim\left(V(f)\cap\bigcap_{g\in \mathcal T^1(f)}V(g)\right)=\dim\left(V(f)_{k'}\cap\bigcap_{g\in \mathcal T^1(f)\otimes_kk'}V(g)\right)=s.\]
On d\'esigne par $V_t$ le sch\'ema \begin{equation*}
  V(f)_{k'}\cap V(g_1)\cap\cdots\cap V(g_t)
\end{equation*}
pour simplifier. Pout tout $t\in\{0,1,\ldots,n-s-1\}$, on d\'emontrera qu'il existe $g_1,\ldots,g_t\in\mathcal T^1(f)\otimes_kk'$ (si $t=0$, on d\'efinit que l'ensemble des $\{g_1,\ldots,g_t\}$ est vide), tels que $V_t$ soit une intersection compl\`ete. Si on a l'assertion ci-dessus, on montre le r\'esultat original.

On raisonne par r\'ecurrence sur l'entier $t$ d\'efini ci-dessus, o\`u $0\leqslant t\leqslant n-s-1$. Comme $V_0=V(f)_{k'}$ est une hypersurface qui est une intersection compl\`ete, le cas de $t=0$ est d\'emontr\'e par d\'efinition directement.

 Si on a d\'ej\`a trouv\'e les $g_1,\ldots,g_t\in \mathcal T^1(f)\otimes_{k}k'$, tels que $V_t$ soit une intersection compl\`ete, o\`u $0\leqslant t\leqslant n-s-2$. Alors pour tout $U\in\mathcal C(V_t)$, on a $\dim(U)=n-t-1$.

Si pour tout $h\in\mathcal T^1(f)\otimes_k{k'}$, il toujours existe une $U\in\mathcal C(V_t)$, telle que $U\subseteq V(h)$. Alors on obtient
\[U\subsetneq V(f)_{k'}\cap \left(\bigcap_{g\in \mathcal T^1(f)\otimes_kk'}V(g)\right),\]
qui contredit avec ce que $\dim (V(f)_{k'}^{\mathrm{sing}})=s<n-t-1=\dim(U)$.

Alors pour tout $U\in\mathcal C(V_t)$, on peut trouver un $g_U\in\mathcal T^1(f)\otimes_kk'$, tel que
\[U\nsubseteq V(g_U).\]
 On d\'efinit
\[L(U)=\{h\in\mathcal T^1(f)\otimes_kk'|U\subseteq V(h)\}.\]
Alors dans ce cas-l\`a, pour tout $U\in\mathcal C(V_t)$, $L(U)$ est un sous-espace $k'$-vectoriel propre de $\mathcal T^1(f)\otimes_kk'$. Comme le cardinal de $k'$ est infini et le cardinal de $\mathcal C(V_t)$ est fini, il existe un vecteur $h\in \mathcal T^1(f)\otimes_kk'$, tel que
\[h\not\in\bigcup_{U\in \mathcal C(V_t)}L(U).\]
Alors pour tout $U\in\mathcal C(V_t)$, on a $U\nsubseteq V(h)$. Donc $V(h)\cap V_t$ est une intersection compl\`ete.

Donc pour tout $0\leqslant t\leqslant n-s-1$, on peut trouver des $g_1,\ldots,g_{t}$, qui satisfont le besoin. C'est la fin de la d\'emonstration.
\end{proof}

 Soit $X\hookrightarrow \mathbb{P}^n_{\f_q}$ l'hypersurface projective d\'efinie par le polyn\^ome homog\`ene $f$ non-nul de degr\'e $\delta$, dont la dimension du lieu singulier est plus grande ou \'egale \`a z\'ero. Soit $\f_{q^m}/\f_q$ une extension finie telle que l'on peut trouver une suite de $g_1,\ldots,g_{n-s-1}\in\mathcal T^1(f)\otimes_{\f_q}\f_{q^m}$ non-nuls qui satisfont que $X_{\f_{q^m}},V(g_1),\ldots,V(g_{n-s-1})$ soit une intersection compl\`ete. L'extension $\f_{q^m}/\f_q$ est galoisienne, car $\gal(\f_{q^m}/\f_q)=(\mathbb Z/m\mathbb Z,+)$. D'apr\`es le lemme \ref{number of intersection}, les $g_1,\ldots,g_{n-s-1}\in\mathcal T^1(f)\otimes_{\f_q}\f_{q^m}$ existent lorsque l'entier $m$ est assez positif. Soient $\xi\in X(\f_q)$, et $\xi'=\xi\times_{\spec k}\spec k'$. Alors on a $\mu_\xi(X)=\mu_{\xi'}(X_{\f_{q^m}})$ d'apr\`es la proposition \ref{mult of point under base change}.

 On d\'esigne par $X_{i,\f_{q^m}}$ l'hypersurface $V(g_i)$ d\'efinie par $g_i$ sur $\f_{q^m}$, o\`u $i=1,\ldots,n-s-1$. Par le crit\`ere jacobien (cf. \cite[Theorem 4.2.19]{LiuQing}), on obtient $X^{\mathrm{sing}}_{\f_{q^m}}\subseteq X_{\f_{q^m}}\cap X_{1,\f_{q^m}}\cap\cdots\cap X_{n-s-1,\f_{q^m}}$.

Pour tout sous-sch\'ema int\`egre $Y$ de $X_{\f_{q^m}}$, on d\'esigne par $Y^{(a)}$ le lieu des points dans $Y$ dont les multiplicit\'es sont \'egales \`a $\mu_Y(X_{\f_{q^m}})$, et par $Y^{(b)}$ le lieu des points dans $Y$ dont les multiplicit\'es sont plus grandes ou \'egales \`a $\mu_Y(X_{\f_{q^m}})+1$. De plus, on d\'esigne par $Y^{(a)}(\f_q)$ (\resp $Y^{(b)}(\f_q)$) l'ensemble des points $\f_{q^m}$-rationnels de $Y^{(a)}$ (\resp $Y^{(b)}$) qui apparaissent dans les images inverses des \'el\'ements de $\mathbb P^n_{\f_q}(\f_q)$ par rapport \`a l'immersion ferm\'ee de $Y$ dans $\mathbb P^n_{\f_{q^m}}$ sous le changement de base $\mathbb P^n_{\f_{q^m}}\rightarrow\mathbb P^n_{\f_q}$ (voir la d\'efinition \ref{descent of k-points}). Donc on a $Y(\f_q)=Y^{(a)}(\f_q)\bigsqcup Y^{(b)}(\f_q)$.

D'apr\`es le corollaire \ref{sub}, on obtient que $Y^{(a)}$ est dense dans $Y$ si $Y^{(a)}\neq\emptyset$, et $Y^{(b)}$ est de dimension plus petite ou \'egale \`a $\dim (Y)-1$.

Dans la suite, on construit une famille d'arbres d'intersection $\{\mathscr T_Y\}$, o\`u $Y\in\mathcal C(X_{\f_{q^m}}\cdot X_{1,\f_{q^m}}\intersect X_{n-s-1,{\f_{q^m}}})$. La racine de l'arbre d'intersection $\mathscr T_Y$ est $Y$.

Pour construire les sommets de profondeur plus grande ou \'egale \`a $1$, soit $U$ un sommet d\'eja construit dans les arbres d'intersection $\{\mathscr T_Y\}$. On consid\`ere le sommet $U$ comme un sch\'ema int\`egre. Il faut consid\'erer les propri\'et\'es de $U(\f_q)$, o\`u $U(\f_q)$ est d\'efini dans la d\'efinition \ref{descent of k-points}. Si $U^{(b)}(\f_q)=\emptyset$, le sommet $U$ est une feuille dans les arbres d'intersection.

Si $U^{(b)}(\f_q)\neq\emptyset$, alors on a $\mu_U(X_{\f_{q^m}})<\delta$. D'apr\`es le corollaire \ref{point}, on peut trouver un $h\in\mathcal T^{\delta-\mu_U(X_{\f_{q^m}})}(f)\otimes_{\f_q}\f_{q^m}$, tel que l'hypersurface d\'efinie par le polyn\^ome $h$ intersecte $U$ proprement. Bien s\^ur on a $\deg (h)\leqslant\delta-1$. Dans ce cas-l\`a, on d\'efinit $V(h)$ comme l'\'etiquette du sommet $U$.

Les poids des ar\^etes sont les multiplicit\'es d'intersection respectives.

Pour la construction plus haute, toutes les \'etiquettes mentionn\'ees ci-dessus sont de dimension $n-1$, donc les sommet dans $\mathcal C_w$ sont de dimension $n-w-2$, o\`u $1\leqslant w\leqslant n-2$ est un entier.

Le lemme suivant est une propri\'et\'e de l'ensemble $\mathcal Z_*$ (voir la d\'efinition \ref{Zs}), qui sera utile dans la d\'emonstration du th\'eor\`eme \ref{main result}. C'est la motivation que l'on d\'efinit le sous-ensemble $\mathcal Z_*$ de $\mathcal C_*$.
\begin{lemm}\label{control of singular locus}
  Avec les notations et constructions ci-dessus, pour tout $\xi\in X^{\mathrm{sing}}_{\f_{q^m}}(\f_q)$, il existe au moins un $Z\in\mathcal Z_*$ tel que $\xi\in Z^{(a)}(\f_q)$, o\`u $\mathcal Z_*$ est d\'efini dans la d\'efinition \ref{Zs}.
\end{lemm}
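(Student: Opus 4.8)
The plan is to prove the statement by a descent along the intersection trees that follows the fixed point $\xi$, and then to select, among the vertices reached this way, a particularly ``saturated'' one lying in $\mathcal Z_*$. First I would record the starting point: since $\xi\in X^{\mathrm{sing}}_{\f_{q^m}}(\f_q)$, the Jacobian criterion (already invoked in the construction) places $\xi$ in the complete intersection $X_{\f_{q^m}}\cap X_{1,\f_{q^m}}\cap\cdots\cap X_{n-s-1,\f_{q^m}}$, hence inside at least one root $Y_0\in\mathcal C(X_{\f_{q^m}}\cdot X_{1,\f_{q^m}}\intersect X_{n-s-1,\f_{q^m}})$. Let $\Sigma$ be the finite set of integral subschemes occurring as vertices of the trees $\{\mathscr T_Y\}$ whose $(a)$-locus contains $\xi$, that is, the vertices $N$ with $\xi\in N$ and $\mu_N(X_{\f_{q^m}})=\mu_\xi(X_{\f_{q^m}})$.

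To see that $\Sigma\neq\emptyset$ I would descend from $Y_0$ following $\xi$. At a vertex $U$ with $\xi\in U^{(b)}$ one has $\mu_U(X_{\f_{q^m}})<\mu_\xi(X_{\f_{q^m}})$, so its label $\widetilde U$ is a derived hypersurface of order $\mu_U(X_{\f_{q^m}})\le\mu_\xi(X_{\f_{q^m}})-1$ and therefore passes through $\xi$ by Corollary \ref{point}; thus $\xi\in U\cap\widetilde U$ and $\xi$ lies in one of the children of $U$, which has strictly smaller dimension. Because the trees are finite this descent stops, and it can only halt at a vertex $Z_0$ with $\xi\in Z_0^{(a)}$: a vertex with $\xi\in U^{(b)}$ has $\xi\in U^{(b)}(\f_q)\neq\emptyset$, hence is not a leaf and admits a further step. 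So $Z_0\in\Sigma$.

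The heart of the argument is to pick inside $\Sigma$ an element of $\mathcal Z_*$. I would take $N_0\in\Sigma$ maximal for inclusion and claim $N_0\in\mathcal Z_*$; granting this, $N_0$ is the desired $Z$, since $\xi\in N_0^{(a)}(\f_q)$ by the definition of $\Sigma$. To check the condition of Definition \ref{Zs}, fix any vertex $Z$ with $N_0\subsetneq Z$ and run a forced descent from $Z$ aimed at $N_0$. At any vertex $C$ with $N_0\subsetneq C$, maximality of $N_0$ forces $C\notin\Sigma$, so $\mu_C(X_{\f_{q^m}})<\mu_\xi(X_{\f_{q^m}})=\mu_{N_0}(X_{\f_{q^m}})$ and $\xi\in C^{(b)}(\f_q)\neq\emptyset$; hence $C$ is not a leaf and carries a label $\widetilde C$ of order $\mu_C(X_{\f_{q^m}})<\mu_{N_0}(X_{\f_{q^m}})$. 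By Corollary \ref{mult} applied at the generic point of $N_0$, that generic point lies on $\widetilde C$, so $N_0\subseteq C\cap\widetilde C$; being integral, $N_0$ is contained in exactly one child $C'$ of $C$, with $\dim C'=\dim C-1\ge\dim N_0$. Iterating, the dimension drops by one at each step while remaining $\ge\dim N_0$, so after finitely many steps a child coincides with $N_0$. Thus $N_0$ occurs as a descendant of $Z$, which is precisely the defining property of $\mathcal Z_*$, and the lemma follows with $Z:=N_0$.

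The main obstacle is exactly this last selection: a naive descent only lands in $\mathcal C_*$, and one must exclude the possibility that the reached vertex sits properly inside some other occurrence without appearing below it. Choosing $N_0$ maximal in $\Sigma$ is what defeats this, because maximality guarantees that every vertex properly containing $N_0$ has $\xi$ in its $(b)$-locus, hence a label of order strictly below $\mu_{N_0}(X_{\f_{q^m}})$, which by Corollary \ref{mult} must contain $N_0$ and force it to reappear among the descendants. A point to be careful about throughout is that the label at a non-leaf $U$ is taken of order $\mu_U(X_{\f_{q^m}})$, the minimal order for which a derived hypersurface can meet $U$ properly; this is what simultaneously guarantees proper intersection and the passage through every point of multiplicity exceeding $\mu_U(X_{\f_{q^m}})$, and both the nonemptiness of $\Sigma$ and the forced descent rest on it.
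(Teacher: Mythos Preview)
Your proof is correct and rests on the same mechanism as the paper's: if a vertex $C$ properly contains the target and $\xi\in C^{(b)}$, then the label $\widetilde C$ (a derived hypersurface of order $\mu_C(X_{\f_{q^m}})$) contains the target by Corollary~\ref{mult}, forcing the target into a child of $C$. The only organizational difference is the selection principle: the paper picks $Y$ at the \emph{minimal depth} $w$ with $\xi\in Y^{(a)}(\f_q)$ and argues by contradiction via a maximal bad depth $w'$, whereas you pick $N_0$ \emph{maximal for inclusion} in $\Sigma$ and run the forced descent directly; your variant is slightly more transparent but the content is the same.
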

\begin{proof}
Soit $Y\in\mathcal C(X_{\f_{q^m}}\cdot X_{1,\f_{q^m}}\intersect X_{n-s-1,{\f_{q^m}}})$. Par la construction des arbres d'intersection $\mathscr T_Y$ ci-dessus, pour tout $\xi\in X^{\mathrm{sing}}_{\f_{q^m}}(\f_q)$, on a $\xi\in Y(\f_q)$ pour au moins un $Y\in\mathcal C(X_{\f_{q^m}}\cdot X_{1,\f_{q^m}}\intersect X_{n-s-1,{\f_{q^m}}})$.

Soit $\mathcal C_m$ comme dans la d\'efinition \ref{Cs}. Si $\mathcal C_{n-2}\neq\emptyset$, les sommets dans $\mathcal C_{n-2}$ sont certains points rationnels, qui doivent \^etre r\'eguliers. Si $\mathcal C_t=\emptyset$ mais $\mathcal C_{t-1}\neq\emptyset$, alors pour tout $U\in \mathcal C_{t-1}$, on a $U^{(b)}(\f_q)=\emptyset$. Donc pour tout $\xi\in X^{\mathrm{sing}}_{\f_{q^m}}(\f_q)$, il toujours existe un $Y\in\mathcal C_w$, tel que $\xi\in Y^{(a)}(\f_q)$.

Pour un $\xi\in  X_{\f_{q^m}}^{\mathrm{sing}}(\f_q)$ fix\'e, on prend la valeur minimale $w$ telle qu'il existe un $Y\in\mathcal C_w$ v\'erifiant $\xi\in Y^{(a)}(\f_q)$. S'il existe un tel $Y\in\mathcal Z_w$, on a l'assertion. Sinon, pour tout $Y\in\mathcal C_w$ qui satisfait $\xi\in Y^{(a)}(\f_q)$, on a toujours $Y\not\in\mathcal Z_w$. Alors on peut trouver l'entier positif maximal $w'$ qui satisfait la condition suivantes: $w'<w$, et il existe un $Y_0\in \mathcal C_{w'}$ tel que $Y\subsetneq Y_0$ mais $Y$ ne soit pas parmi les descendants de $Y_0$. Si $\xi\in Y_0^{(a)}(\f_q)$, il contredit avec ce que $w$ est minimal. Si $\xi\in Y_0^{(b)}(\f_q)$, alors on a $\mu_Y(X_{\f_{q^m}})=\mu_\xi(X_{\f_{q^m}})\geqslant\mu_{Y^0}(X_{\f_{q^m}})+1$. D'apr\`es la construction des arbres d'intersection ci-dessus, $Y$ est un descendant de $Y_0$, qui contredit avec ce que le choix de $w'$ est maximal.

En r\'esum\'e, on a l'assertion.
\end{proof}
\subsection{D\'emonstration du th\'eor\`eme \ref{main result}}
Avec toutes les pr\'eparations ci-dessus, on va d\'emontrer le th\'eor\`eme \ref{main result}.
\begin{proof}[D\'emonstration du th\'eor\`eme \ref{main result}]
On prend la construction des arbres d'intersection dont les racines sont les \'el\'ements dans $\mathcal C(X_{\f_{q^m}}\cdot X_{1,\f_{q^m}}\intersect X_{n-s-1,\f_{q^m}})$ dans \S \ref{construction of intersection trees}. D'apr\`es la proposition \ref{mult of point under base change}, comme $\f_q$ est un corps parfait, on a $\mu_\xi(X)=\mu_{\xi'}(X_{\f_{q^m}})$, o\`u $\xi\in X(\f_q)$ et $\xi'=\xi\times_{\spec \f_q}\spec \f_{q^m}$.

Donc on obtient
\begin{eqnarray}\label{first step}
  & &\sum\limits_{\xi\in X(\f_q)}\mu_\xi(X)(\mu_\xi(X)-1)^{n-s-1}\\
  &=&\sum\limits_{\xi\in X^{\mathrm{sing}}(\f_q)}\mu_\xi(X)(\mu_\xi(X)-1)^{n-s-1}\nonumber\\
  &=&\sum\limits_{\xi\in X^{\mathrm{sing}}_{\f_{q^m}}(\f_q)}\mu_\xi(X_{\f_{q^m}})(\mu_\xi(X_{\f_{q^m}})-1)^{n-s-1},\nonumber
\end{eqnarray}
o\`u la notation $ X^{\mathrm{sing}}_{\f_{q^m}}(\f_q)$ est introduite dans la d\'efinition \ref{descent of k-points}.

D'apr\`es le lemme \ref{control of singular locus}, pour tout $\xi\in X^{\mathrm{sing}}_{\f_{q^m}}(\f_q)$, on peut trouver un $Z\in\mathcal Z_*$ tel que $\xi\in Z^{(a)}(\f_q)$. Donc on obtient
\begin{eqnarray}\label{final 1}
& &\sum\limits_{\xi\in X^{\mathrm{sing}}_{\f_{q^m}}(\f_q)}\mu_\xi(X_{\f_{q^m}})(\mu_\xi(X_{\f_{q^m}})-1)^{n-s-1}\\
&\leqslant&\sum_{t=0}^{s}\sum_{Z\in\mathcal Z_t}\sum_{\xi\in Z^{(a)}(\f_q)}\mu_\xi(X_{\f_{q^m}})(\mu_\xi(X_{\f_{q^m}})-1)^{n-s-1}.\nonumber
\end{eqnarray}

D'apr\`es le corollaire \ref{mult}, pour tout $Z\in \mathcal Z_*$, on obtient que l'in\'egalit\'e
\[\mu_Z(X_{\f_{q^m}})-1\leqslant\mu_Z(X_{i,\f_{q^m}}),\]
est v\'erifi\'ee pour tout $i=1,\ldots,n-s-1$. Donc on a l'in\'egalit\'e
\begin{equation}\label{final_1'}
\mu_Z(X_{\f_{q^m}})(\mu_Z(X_{\f_{q^m}})-1)^{n-s-1}\leqslant\mu_Z(X_{\f_{q^m}})\mu_Z(X_{1,\f_{q^m}})\cdots\mu_Z(X_{n-s-1,\f_{q^m}}).
\end{equation}

 D'apr\`es la proposition \ref{globale} et l'in\'egalit\'e \eqref{final_1'}, on a
\begin{eqnarray}\label{final 2}
& &\sum_{Z\in\mathcal Z_t}\mu_Z(X_{\f_{q^m}})(\mu_Z(X_{\f_{q^m}})-1)^{n-s-1}\deg(Z)\\
&\leqslant&\sum_{Z\in\mathcal Z_t}\mu_Z(X_{\f_{q^m}})\mu_Z(X_{1,\f_{q^m}})\cdots\mu_Z( X_{n-s-1,\f_{q^m}})\deg(Z)\leqslant\delta(\delta-1)^{n-s+t-1}\nonumber
\end{eqnarray}
pour tout $t=0,\ldots,s$, car toutes les \'etiquettes dans $\mathcal C'_*$ sont de degr\'e plus petit ou \'egal \`a $\delta-1$.

Avec les in\'egalit\'es \eqref{final 1} et \eqref{final 2}, on a
\begin{eqnarray}\label{final step}
  & &\sum_{t=0}^{s}\sum_{Z\in\mathcal Z_t}\sum_{\xi\in Z^{(a)}(\f_q)}\mu_\xi(X_{\f_{q^m}})(\mu_\xi(X_{\f_{q^m}})-1)^{n-s-1}\\
  &=&\sum_{t=0}^{s}\sum_{Z\in\mathcal Z_t}\mu_Z(X_{\f_{q^m}})(\mu_Z(X_{\f_{q^m}})-1)^{n-s-1}\#Z^{(a)}(\f_q)\nonumber\\
  &\leqslant&\sum_{t=0}^{s}\sum_{Z\in\mathcal Z_t}\mu_Z(X_{\f_{q^m}})(\mu_Z(X_{\f_{q^m}})-1)^{n-s-1}\#Z(\f_q)\nonumber\\
  &\leqslant&\sum_{t=0}^{s}\sum_{Z\in\mathcal Z_t}\left(\mu_Z(X_{\f_{q^m}})(\mu_Z(X_{\f_{q^m}})-1)^{n-s-1}\deg(Z)\#\mathbb P^{s-t}(\f_q)\right)\nonumber\\
  &=&\sum_{t=0}^{s}\#\mathbb P^{s-t}(\f_q)\left(\sum_{Z\in\mathcal Z_t}\mu_Z(X_{\f_{q^m}})(\mu_Z(X_{\f_{q^m}})-1)^{n-s-1}\deg(Z)\right)\nonumber\\
   &\leqslant&\delta(\delta-1)^{n-s-1}\#\mathbb{P}^{s}(\f_q)+\delta(\delta-1)^{n-s}\#\mathbb{P}^{s-1}(\f_q)\nonumber\\
  & &+\cdots+\delta(\delta-1)^{n-1},\nonumber
\end{eqnarray}
o\`u l'in\'egalit\'e dans la trois\`eme ligne est v\'erifi\'ee d'apr\`es la proposition \ref{lineaire}, et la derni\`ere int\'egalit\'e est vraie d'apr\`es le lemme \ref{grassmanne}.

D'apr\`es les in\'egalit\'es \eqref{first step}, \eqref{final 1} et \eqref{final step}, on obtient le r\'esultat.
\end{proof}

\begin{rema}
Si $n=2$, par la m\'ethode similaire \`a la d\'emonstration du th\'eor\`eme \ref{main result}, on obtient l'in\'egalit\'e \eqref{intro-fulton-multiplicity} , o\`u en fait on peut consid\'erer tous les points ferm\'es de cette courbe plane. D'apr\`es le th\'eor\`eme \ref{main result}, on a
\begin{eqnarray*}
  \sum\limits_{\xi\in X(\f_q)}\mu_\xi(X)(\mu_\xi(X)-1)^{n-s-1}&\leqslant&(s+1)^2\delta(\delta-1)^{n-s-1}\max\{\delta-1,q\}^{s}\\
  &\ll_n&\delta(\delta-1)^{n-s-1}\max\{\delta-1,q\}^{s},
\end{eqnarray*}
 comme $s\leqslant n-2$.
\end{rema}

\begin{exem}\label{cylinder from a plane curve}
   Soit $X'\hookrightarrow\mathbb{P}^2_{\f_q}$ une courbe plane r\'eduite de degr\'e $\delta$ d\'efinie par l'\'equation homog\`ene $f(T_0,T_1,T_2)=0$ qui a seulement un point $\f_q$-rationnel singulier de multiplicit\'e $\delta$. Alors on peut consid\'erer $f(T_0,T_1,T_2)$ comme un polyn\^ome homog\`ene de degr\'e $\delta$ dans $\f_q[T_0,\ldots,T_n]$. Donc l'\'equation homog\`ene $f(T_0,T_1,T_2)=0$ d\'efinit une hypersurface r\'eduite de degr\'e $\delta$ de $\mathbb{P}^n_{\f_q}$ ($n\geqslant2$), not\'ee comme $X$ cette hypersurface. Soit $[a_0:a_1:a_2]$ la coordonn\'ee projective du point singulier de $X'$. Alors on a
   \begin{eqnarray*}X^{\mathrm{sing}}(\f_q)&=&\{[x_0:\cdots:x_n]\in\mathbb P^n_{\f_q}(\f_q)|\;x_0=a_0,x_1=a_1,x_2=a_2\}\cup\\
   	& &\{[x_0:\cdots:x_n]\in\mathbb P^n_{\f_q}(\f_q)|\;x_0=x_1=x_2=0\},\end{eqnarray*}
    o\`u tous les points $\f_q$-rationnels singuliers sont de multiplicit\'e $\delta$. Alors pour l'hypersurface $X$, on obtient
\begin{eqnarray*}
  \sum\limits_{\xi\in X(\f_q)}\mu_\xi(X)(\mu_\xi(X)-1)&=&\delta(\delta-1)q^{n-2}+\delta(\delta-1)(q^{n-3}+\cdots+1)\\
  &=&\delta(\delta-1)(q^{n-2}+\cdots+1)\\
  &\;\sim_n&\delta(\delta-1)q^{n-2}.
\end{eqnarray*}
Alors l'ordre de $\delta$ et l'ordre de $q$ dans le th\'eor\`eme \ref{main result} sont optimaux pour le cas o\`u $q$ est assez grand et $\dim (X^{\mathrm{sing}})=n-2$.
\end{exem}
\begin{rema}\label{proper counting function}
  Soit $X$ une hypersurface de $\mathbb P^n_{\f_q}$, o\`u $\dim(X^{\mathrm{sing}})=s$. D'apr\`es le th\'eor\`eme \ref{main result}, on obtient
\begin{eqnarray*}
   & &\sum\limits_{\xi\in X(\f_q)}\mu_\xi(X)(\mu_\xi(X)-1)\\
   &\leqslant&\sum\limits_{\xi\in X(\f_q)}\mu_\xi(X)(\mu_\xi(X)-1)^2\leqslant\cdots\leqslant\sum\limits_{\xi\in X(\f_q)}\mu_\xi(X)(\mu_\xi(X)-1)^{n-s-1}\\
   &\leqslant&\delta(\delta-1)^{n-s-1}(q^{s}+q^{s-1}+\cdots+1)+\\
    & &\delta(\delta-1)^{n-s}(q^{s-1}+q^{s-2}+\cdots+1)+\cdots+\delta(\delta-1)^{n-1}.
\end{eqnarray*}
Donc on obtient que pour tout $t\in\{1,\ldots,n-s-1\}$, on a
\[\sum\limits_{\xi\in X(\f_q)}\mu_\xi(X)(\mu_\xi(X)-1)^t\ll_n\delta(\delta-1)^{n-s-1}q^s\]
lorsque $q\geqslant\delta-1$.

Soit $t$ un entier avec $t\geqslant n-s$, on peut construire un exemple (l'exemple \ref{cylinder from a plane curve} par exemple), tel que
\[\sum\limits_{\xi\in X(\f_q)}\mu_\xi(X)(\mu_\xi(X)-1)^t\sim_n\delta(\delta-1)^tq^s\]
lorsque $q\geqslant\delta-1$.

Soit $f(T)\in\mathbb R[T]$ un polyn\^ome de degr\'e $n-s$, qui satisfait $f(1)=0$ et $f(x)>0$ pour tout $x\geqslant2$. Donc il existe une constante $C_f>0$ d\'ependant du polyn\^ome $f(T)$, telle que
\[f(x)\leqslant C_fx(x-1)^{n-s-1}\]
pour tout $x\geqslant1$. D'o\`u l'on a
\[\sum_{\xi\in X(\f_q)}f(\mu_\xi(X))\leqslant C_f\sum_{\xi\in X(\f_q)}\mu_\xi(X)(\mu_\xi(X)-1)^{n-s-1}\ll_{n,f}\delta(\delta-1)^{n-s-1}\max\{\delta-1,q\}^s.\]
 Alors le choix de la fonction de comptage
\[\mu_\xi(X)(\mu_\xi(X)-1)^{n-s-1}\]
 est convenable pour d\'ecrire la complexit\'e du lieu singulier de $X$, o\`u $\xi\in X(\f_q)$.
\end{rema}
Pour g\'en\'eraliser le th\'eor\`eme \ref{main result} au cas o\`u $X$ est un sch\'ema projectif g\'en\'eral, on a la conjecture suivante.
\begin{conj}
  Soit $X$ un sous-sch\'ema ferm\'e r\'eduit de $\mathbb P^n_{\f_q}$ qui est de dimension pure $d$ et de degr\'e $\delta$. Si la dimension du lieu singulier de $X$ est $s$, alors on a
  \begin{equation*}
  \sum\limits_{\xi\in X(\f_q)}\mu_\xi(X)(\mu_\xi(X)-1)^{d-s}\ll_n\delta(\delta-1)^{d-s}q^s.
\end{equation*}
\end{conj}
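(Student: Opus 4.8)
The plan is to reduce the left-hand side to a sum over singular points, transport the problem to a finite Galois extension of $\f_q$ where the singular locus is cut out by a complete intersection of first-order derivative hypersurfaces, and then distribute the singular rational points among a family of intersection trees whose total weight is controlled by Proposition~\ref{globale}. First I would observe that $\mu_\xi(X)(\mu_\xi(X)-1)^{n-s-1}=0$ whenever $\mu_\xi(X)=1$, i.e. at every regular point, so only $\xi\in X^{\mathrm{sing}}(\f_q)$ contributes. To manufacture the hypersurfaces that will form the roots, I would apply Lemma~\ref{number of intersection} to find a finite extension $\f_{q^m}/\f_q$ (automatically Galois) and derivatives $g_1,\ldots,g_{n-s-1}\in\mathcal T^1(f)\otimes_{\f_q}\f_{q^m}$ so that $X_{\f_{q^m}}\cap V(g_1)\cap\cdots\cap V(g_{n-s-1})$ is a complete intersection of dimension $s$, containing $X^{\mathrm{sing}}_{\f_{q^m}}$ by the Jacobian criterion. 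Because $\f_q$ is perfect, Proposition~\ref{mult of point under base change} yields $\mu_\xi(X)=\mu_{\xi'}(X_{\f_{q^m}})$ for $\xi'=\xi\times_{\spec\f_q}\spec\f_{q^m}$, so I may compute over $\f_{q^m}$ while counting only points descending from $\f_q$-points in the sense of Definition~\ref{descent of k-points}.

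Next I would build, as in \S\ref{construction of intersection trees}, the intersection trees $\{\mathscr T_Y\}$ with roots $Y\in\mathcal C(X_{\f_{q^m}}\cdot V(g_1)\intersect V(g_{n-s-1}))$, attaching to each non-leaf vertex $U$ (those with $U^{(b)}(\f_q)\neq\emptyset$) a derivative hypersurface $V(h)$ meeting $U$ properly, of degree $\mu_U(X_{\f_{q^m}})\leqslant\delta-1$ and dimension $n-1$. A vertex of depth $t$ then has dimension $s-t$, and all labels share the dimension $n-1$, so Proposition~\ref{globale} applies. The structural heart is Lemma~\ref{control of singular locus}: every $\xi\in X^{\mathrm{sing}}_{\f_{q^m}}(\f_q)$ lies in $Z^{(a)}(\f_q)$ for some $Z\in\mathcal Z_*$, the locus where the local multiplicity equals $\mu_Z(X_{\f_{q^m}})$ and $Z$ satisfies the descendant condition of Definition~\ref{Zs} required by Theorem~\ref{chongshu2}. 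This rewrites the sum as
\[\sum_{\xi\in X^{\mathrm{sing}}_{\f_{q^m}}(\f_q)}\mu_\xi(\mu_\xi-1)^{n-s-1}\leqslant\sum_{t=0}^{s}\sum_{Z\in\mathcal Z_t}\mu_Z(X_{\f_{q^m}})(\mu_Z(X_{\f_{q^m}})-1)^{n-s-1}\,\#Z^{(a)}(\f_q).\]

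The final step combines two estimates. On the multiplicity side, Corollary~\ref{mult} gives $\mu_Z(X_{\f_{q^m}})-1\leqslant\mu_Z(V(g_i))$ for each $i$ (valid since $\mu_Z\geqslant 2$ at a singular point), so
\[\mu_Z(X_{\f_{q^m}})(\mu_Z(X_{\f_{q^m}})-1)^{n-s-1}\deg(Z)\leqslant\mu_Z(X_{\f_{q^m}})\prod_{i=1}^{n-s-1}\mu_Z(V(g_i))\,\deg(Z),\]
and summing over $\mathcal Z_t$ via Proposition~\ref{globale} (with $\prod_i\deg=\delta(\delta-1)^{n-s-1}$ and labels of degree $\leqslant\delta-1$) bounds the left-hand side by $\delta(\delta-1)^{n-s-1+t}$. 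On the counting side, since $\dim Z=s-t$, Proposition~\ref{lineaire} gives $\#Z^{(a)}(\f_q)\leqslant\#Z(\f_q)\leqslant\deg(Z)\,\#\P^{s-t}_{\f_q}(\f_q)$. Multiplying and summing over $t$ produces $\sum_{t=0}^{s}\delta(\delta-1)^{n-s-1+t}\#\P^{s-t}_{\f_q}(\f_q)$, which by Lemma~\ref{grassmanne} equals the asserted right-hand side.

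The main obstacle I anticipate is not this assembly but the verification of Lemma~\ref{control of singular locus}, namely that the recursive labelling really places every singular $\f_q$-point at a vertex of $\mathcal Z_*$ and not merely of $\mathcal C_*$: one takes the minimal depth at which the point appears in some $Z^{(a)}(\f_q)$ and then argues, via the maximal ancestor separating the point into its $(b)$-locus together with the way labels are chosen to raise multiplicity, that the descendant condition of Definition~\ref{Zs} must hold. A secondary delicacy is keeping the descent of points over $\f_{q^m}$ consistent: one must check that $\#Z^{(a)}(\f_q)$ is genuinely bounded by $\#Z(\f_q)$ and that every multiplicity equality survives the base change $\f_{q^m}/\f_q$, which is precisely what Propositions~\ref{mult of point under base change} and~\ref{basechangemultinter} are designed to guarantee.
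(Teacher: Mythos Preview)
The statement you are asked to prove is the \emph{conjecture} at the end of the paper, concerning an arbitrary reduced closed subscheme $X\hookrightarrow\mathbb P^n_{\f_q}$ of pure dimension $d$; the paper does not prove it and leaves it open precisely because the hypersurface machinery does not transfer. What you have written is, in effect, the paper's proof of Theorem~\ref{main result} (the hypersurface case $d=n-1$), and every step you invoke is specific to that case.

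Concretely: Lemma~\ref{number of intersection}, the spaces $\mathcal T^\alpha(f)$, the derivative hypersurfaces of Definition~\ref{derivative hypersurface}, Proposition~\ref{local hilbert of hypersurface}, and Corollary~\ref{mult} all presuppose that $X=V(f)$ is cut out by a single homogeneous polynomial $f$. For a general $X$ of pure dimension $d<n-1$ there is no such $f$, hence no canonical way to produce the first-order derivatives $g_1,\ldots,g_{n-s-1}$ needed for the roots, no analogue of the Taylor calculation giving $\mu_Z(X)-1\leqslant\mu_Z(V(g_i))$, and no mechanism to manufacture the labels of the intersection trees in \S\ref{construction of intersection trees}. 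Your exponent $n-s-1$ also reveals the slip: for general $X$ the statement asks for $(\mu_\xi(X)-1)^{d-s}$, and you have silently replaced $d$ by $n-1$. So the proposal does not address the conjecture at all; it reproduces the already-established hypersurface theorem and leaves the genuine difficulty---finding a substitute for derivative hypersurfaces when $X$ has higher codimension---untouched.
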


\backmatter

\bibliography{liu}
\bibliographystyle{smfplain}

\end{document}